\newtheorem{thm}{Theorem}[section]
\newtheorem{coro}[thm]{Corollary}
\newtheorem{prop}[thm]{Proposition}
\newtheorem*{claim}{Claim}
\newtheorem{lem}[thm]{Lemma}
\theoremstyle{definition}
\newtheorem{defn}[thm]{Definition}
\newtheorem{ex}[thm]{Example}
\newtheorem{question}[thm]{Question}
\newcommand{\Rset}{\mathbb{R}}
\newcommand{\Nset}{\mathbb{N}}
\newcommand{\Zset}{\mathbb{Z}}
\newcommand{\abs}[1]{\lvert#1\rvert}
\newcommand{\norm}[1]{\lVert#1\rVert}
\newcommand{\seq}[1]{\langle#1\rangle}
\newcommand{\eps}{\varepsilon}
\newcommand{\subs}{\subseteq}
\renewcommand{\leq}{\leqslant}
\renewcommand{\geq}{\geqslant}
\newcommand{\mc}[1]{\mathcal{#1}}
\DeclareMathOperator{\inter}{int}
\newcommand{\cl}[1]{\overline{#1}}
\DeclareMathOperator{\hdim}{\dim_{\mathsf{H}}}
\DeclareMathOperator{\diam}{diam}
\DeclareMathOperator{\dist}{dist}
\DeclareMathOperator{\DD}{\mathcal{D}}
\DeclareMathOperator{\DDapp}{\mathcal{D}_{\mathsf{app}}}
\newcommand{\simon}{\mbox{\sf\bfseries M\hspace{-0.15ex}o\hspace{-0.10ex}n}}
\DeclareMathOperator{\KK}{\mathcal{K}}
\DeclareMathOperator{\KKapp}{\mathcal{K}_{\mathsf{app}}}
\DeclareMathOperator{\MM}{\mathcal{M}}
\newenvironment{enum}{\begin{enumerate}[\rm(i)]}{\end{enumerate}}
\newenvironment{itemyze}%
  {\begin{list}{\textbullet}{\labelwidth1ex\setlength{\leftmargin}{2.1em}}}%
  {\end{list}}
\newcommand{\rest}{{\restriction}}
\newcommand{\si}{$\sigma$\nobreakdash-}
\newcommand{\gr}{\mathfrak{f}}
\newcommand{\ggr}{\mathfrak{g}}
\newcommand{\len}{\mathscr H^1}
\newcommand{\leb}{\mathscr L}
\newcommand{\urd}[1]{\overline#1{}^+}
\newcommand{\lrd}[1]{\underline#1{}^+}
\newcommand{\uld}[1]{\overline#1{}^-}
\newcommand{\lld}[1]{\underline#1{}^-}
\newcommand{\urdapp}[1]{\overline#1{}^+_{\mathsf{app}}}
\newcommand{\lrdapp}[1]{\underline#1{}^+_{\mathsf{app}}}
\newcommand{\uldapp}[1]{\overline#1{}^-_{\mathsf{app}}}
\newcommand{\lldapp}[1]{\underline#1{}^-_{\mathsf{app}}}
\newcommand{\dapp}[1]{#1'_{\mathsf{app}}}
\newcommand{\rd}[1]{#1^+}
\newcommand{\ld}[1]{#1^-}
\begin{document}
% -------------------------------------------------------------
\title
%[Fractal properties of monotone sets and graphs]
{Properties of functions with monotone graphs}

\author{Ond\v rej Zindulka}
\address
{Ond\v rej Zindulka\\
Department of Mathematics\\
Faculty of Civil Engineering\\
Czech Technical University\\
Th\'akurova 7\\
16000 Prague 6\\
Czech Republic}
\email{zindulka@mat.fsv.cvut.cz}
\urladdr{http://mat.fsv.cvut.cz/zindulka}
\author{Michael Hru\v s\'ak}
\address{Michael Hru\v s\'ak\\
Instituto de Matem\'aticas, UNAM, Apartado Postal 61-3,
Xangari, 58089, Morelia, Michoac\'an, M\'exico.}
\email{michael@matmor.unam.mx}
\author{Tam\'as M\'atrai}
\address
{Tam\'as M\'atrai\\
Alfr\' ed R\'enyi Institute of Mathematics\\
Hungarian Academy of Sciences\\
H-1053 Budapest\\
Re\' altanoda u.~13--15\\
Hungary}
\email{matrait@renyi.hu}
\author{Ale\v s Nekvinda}
\address
{Ale\v s Nekvinda\\
Department of Mathematics\\
Faculty of Civil Engineering\\
Czech Technical University\\
Th\'akurova 7\\
16000 Prague 6\\
Czech Republic}
\email{nales@mat.fsv.cvut.cz}
\author{V\'aclav Vlas\'ak}
\address
{V\'aclav Vlas\'ak\\
Department of Mathematical Analysis\\
Faculty of Mathematics and Physics\\
Charles University\\
Sokolovsk\' a 83\\
18675 Prague 8\\
Czech Republic}
\email{vlasakmm@volny.cz}
\subjclass[2000]{26A24, 26A27, 26A46}
\keywords{Monotone metric space, continuous function, graph,
derivative, approximate derivative, absolutely continuous function, \si porous set}
\thanks{%
Some of the work on this project was conducted during O.~Zindulka's sabbatical
stay at the Instituto de matem\'aticas, Unidad Morelia,
Universidad Nacional Auton\'oma de M\'exico supported by \mbox{CONACyT} grant no.~125108.
M. Hru\v s\'ak gratefully acknowledges support from PAPIIT grant IN101608 and
CONACYT grant 80355.
A.~Nekvinda was supported by MSM 6840770010 and the grant 201/08/0383 of
the Grant Agency of the Czech Republic.
V.~Vlas\'ak was supported by the grant 22308/B-MAT/MFF of the Grant Agency of the Charles
University in Prague and by grant 201/09/0067 of
the Grant Agency of the Czech Republic%
}

%%% ----------------------------------------------------------

\begin{abstract}
A metric space $(X,d)$ is \emph{monotone} if there is a linear order $<$ on $X$ and
a constant $c>0$ such that $d(x,y)\leq c d(x,z)$ for all $x<y<z\in X$.
Properties of continuous functions with monotone graph (considered as a planar set)
are investigated. It is shown, e.g., that such a function can be almost nowhere
differentiable, but must be differentiable at a dense set, and
that Hausdorff dimension of the graph of such a function is $1$.
\end{abstract}

%%% -----------------------------------------------------------
\maketitle
%%% -----------------------------------------------------------

%%%%%%%%%%%%%%%%%%%%%%%%%%%%%%%%%%%%%%%%%%%%%%%%%%%%%%%%%%%%%%%%%%%%%%%%%%%%%%%%
%%%%%%%%%%%%%%%%%%%%%%%%%%%%%%%%%%%%%%%%%%%%%%%%%%%%%%%%%%%%%%%%%%%%%%%%%%%%%%%%
\section{Introduction}
%%%%%%%%%%%%%%%%%%%%%%%%%%%%%%%%%%%%%%%%%%%%%%%%%%%%%%%%%%%%%%%%%%%%%%%%%%%%%%%%
%%%%%%%%%%%%%%%%%%%%%%%%%%%%%%%%%%%%%%%%%%%%%%%%%%%%%%%%%%%%%%%%%%%%%%%%%%%%%%%%

A metric space $(X,d)$ is called monotone if there is a linear order $<$ on $X$ and
a constant $c>0$ such that $d(x,y)\leq c d(x,z)$ for all $x<y<z\in X$.

Suppose $f$ is a continuous real-valued function defined on an interval.
The graph $\gr$ of $f$ is a subset of the plane. The goal of this paper is
to investigate differentiability of $f$ assuming that the graph $\gr$
is a monotone space.

%%%%%%%%%%%%%%%%%%%%%%%%%%%%%%%%%%%%%%%%%%%%%%%%%%%%%%%%%%%%%%%%%%%%%%%%%%%%%%%%
\subsection*{Monotone metric spaces}
%%%%%%%%%%%%%%%%%%%%%%%%%%%%%%%%%%%%%%%%%%%%%%%%%%%%%%%%%%%%%%%%%%%%%%%%%%%%%%%%
Monotone metric spaces were introduced in~\cite{Zin1,NZ2,MR2822419}.
Some applications are given in~\cite{Zin1,ZinQ,KMZ}.
\begin{defn}
Let $(X,d)$ be a metric space.

$(X,d)$ is called \emph{monotone} if there is a linear order $<$ on $X$ and
a constant $c>0$ such that for all $x,y,z\in X$
\begin{equation}\label{basic1}
  d(x,y)\leq c d(x,z) \qquad\text{whenever $x<y<z$}.
\end{equation}
The order $<$ is called a \emph{witnessing order} and $c$ is called
a \emph{witnessing constant}.

$(X,d)$ termed \emph{\si monotone} if it is a countable union of monotone subspaces.
\end{defn}
It is easy to check that if $(X,d)$, $c$ and $<$ satisfy \eqref{basic1}, then
$d(y,z)\leq(c+1)d(x,z)$ for all $x<y<z$.
It follows that replacing condition~\eqref{basic1} by
\begin{equation}\label{basic2}
  \max\bigl(d(x,y),d(y,z)\bigr)\leq c d(x,z) \qquad\text{whenever $x<y<z$}
\end{equation}
gives an equivalent definition of a monotone space.
Since we will be occasionally interested in the value of $c$, we introduce
the following notions.
\begin{defn}
Let $c>0$. A metric space $(X,d)$ is called
\begin{enum}
\item \emph{$c$-monotone} if there is a linear order $<$ such
that~\eqref{basic1} holds,
\item \emph{symmetrically $c$-monotone} if there is a linear order
$<$ such that~\eqref{basic2} holds.
\end{enum}
\end{defn}
It is clear that $(X,d)$ is monotone iff it is $c$-monotone for some $c$
iff it is symmetrically $c$-monotone for some $c$.
It is also clear that if a space is $c$-monotone, then it is
symmetrically $(c+1)$-monotone and that a symmetrically $c$-monotone space
is $c$-monotone.

Topological properties of monotone and \si mono\-tone spaces are investigated in
\cite{NZ2}. We recall the relevant facts proved therein.
A monotone metric space is suborderable, i.e.~embeds in a linearly orderable
metric space.
In particular, if $<$ is a witnessing order, then every open interval $(a,b)$
is open in the metric topology, i.e.~the metric topology is finer than
the order topology.

If a metric space contains a dense monotone subspace, then the space itself is
monotone. It follows that every \si monotone subset of a metric space
is contained in a \si monotone $F_\sigma$-subset. This fact will be utilized
at several occasions.

Though the topological dimension of a monotone metric space is at most one,
in a general context of a separable metric space there is nothing one can say
about the Hausdorff dimension of a monotone space. Indeed, there are $1$-monotone
compact spaces of arbitrary Hausdorff dimension, including $\infty$.
However, when one considers only monotone subspaces of Euclidean spaces,
there is, as proved in the oncoming paper~\cite{ZinFrac}, an upper estimate
of Hausdorff dimension by means of the witnessing constant.
On the other hand, by a result from~\cite{KMZ}, every Borel set in $\Rset^n$
contains a \si monotone subset of the same Hausdorff dimension.
Thus a monotone set can have Hausdorff dimension greater than $1$.
The same holds for curves: by an unpublished result of Pieter Allaart and
Ond\v rej Zindulka, the von Koch curve is monotone.

The interplay between porosity and
monotonicity in the plane are investigated in~\cite{HrZi,ZinFrac}.
In particular, by~\cite[Theorem 4.2]{HrZi}, every monotone set in $\Rset^n$ is
strongly porous (see Section~\ref{sec:MICHAL} for the definition).
This fact is utilized in Section~\ref{sec:MICHAL}.

%%%%%%%%%%%%%%%%%%%%%%%%%%%%%%%%%%%%%%%%%%%%%%%%%%%%%%%%%%%%%%%%%%%%%%%%%%%%%%%%
\subsection*{Monotone graphs}
%%%%%%%%%%%%%%%%%%%%%%%%%%%%%%%%%%%%%%%%%%%%%%%%%%%%%%%%%%%%%%%%%%%%%%%%%%%%%%%%

We will focus on properties of continuous functions that have monotone graph.
Our hope was that such a function must be differentiable at a substantial
portion of its domain. It, however, turned out that the interplay between
monotonicity of graph and differentiability is more delicate and definitely
not straightforward. Our goal is to study this interplay.

It turns out that such a graph has \si finite $1$-dimensional Hausdorff measure
and in particular, in contrast with the just mentioned von Koch curve property,
has Hausdorff dimension $1$.
Can one go further and prove for instance that a continuous function with
a monotone graph is differentiable at a large set, say, almost everywhere?
Or, in the other direction, that a differentiable function has a monotone or
\si monotone graph?
We provide answers to these questions.

We outline a few results.
In sections~\ref{sec:ONDREJ} and~\ref{sec:ondrej2} we show,
e.g., that a differentiable function has a \si monotone graph and that
a continuous function with monotone graph has knot points
(i.e.~both upper/lower Dini derivatives are $\infty$/$-\infty$)
almost everywhere where it does not posses a derivative.

In section~\ref{sec:m1} continuous functions with a $1$-monotone graph are
investigated. The strongest result says that such a function on a compact
interval is of finite variation and in particular is differentiable almost
everywhere.

However, in Section~\ref{sec:VASEK} we construct a continuous function that
exhibits that, perhaps surprisingly, this theorem completely fails for
monotone graph:
an almost nowhere differentiable function with a monotone graph. Consequently,
almost all points of the domain are knot points.

So a continuous function with monotone graph can be rather wild.
But not completely: in Section~\ref{sec:ondrej2} we show that such a function
is differentiable at an uncountable dense set and its graph is of \si finite
length and in particular of Hausdorff dimension $1$.

As proved at the beginning of Section~\ref{sec:MICHAL}, a graph of
an absolutely continuous function is \si monotone except a set
of linear measure zero. The following result
is thus perhaps surprising: there is an absolutely continuous function
whose graph is not \si monotone. Moreover, such a function can be constructed
so that the graph is a porous set.

The concluding Section~\ref{sec:Q} lists some open problems.

%%%%%%%%%%%%%%%%%%%%%%%%%%%%%%%%%%%%%%%%%%%%%%%%%%%%%%%%%%%%%%%%%%%%%%%%%%%%%%%%
%%%%%%%%%%%%%%%%%%%%%%%%%%%%%%%%%%%%%%%%%%%%%%%%%%%%%%%%%%%%%%%%%%%%%%%%%%%%%%%%
\section{Monotone graphs}\label{sec:PRE}
%%%%%%%%%%%%%%%%%%%%%%%%%%%%%%%%%%%%%%%%%%%%%%%%%%%%%%%%%%%%%%%%%%%%%%%%%%%%%%%%
%%%%%%%%%%%%%%%%%%%%%%%%%%%%%%%%%%%%%%%%%%%%%%%%%%%%%%%%%%%%%%%%%%%%%%%%%%%%%%%%

A topological closure and interior of a set $A$ in a metric space are denoted
by $\cl A$ and $\inter A$, respectively.

For $A\subs\Rset^2$ denote $\hdim A$ the Hausdorff dimension of $A$.
Lebesgue measure on the line is denoted by $\leb$.
Given $A\subs\Rset^2$, its linear measure, i.e.~$1$-dimensional Hausdorff measure,
is denoted $\len(A)$ and referred to as \emph{Hausdorff length}.

We will be concerned with monotonicity of graphs of continuous functions.
The symbol $I$ is used to denote a non-degenerate interval of real numbers.
Let $f:I\to\Rset$ be a continuous function.
Formally there is no difference between $f$ and its graph,
but confusion may arise for instance from ``$f$ is monotone''.
Therefore we use $\gr$ when referring to the graph of $f$
as a pointset in the plane (and likewise $\ggr$ for the graph of $g$ etc.).
Given a set $E\subs I$, denote $\gr|E$ the graph of $f$ restricted to $E$.

We write $\psi_f(x)=(x,f(x))$
(or just $\psi(x)$ if there is no danger of confusion) to denote the
natural parametrization of $\gr$. The graph $\gr$ is obviously a connected
linearly ordered space. By~\cite[Theorem II]{MR0003201}, if a space is
linearly orderable and connected,
then the order is unique up to reversing. Therefore there are only two
orders on $\gr$ that can witness monotonicity of $C$: the order given by
$\psi(t)<\psi(s)$ if $t<s$ and its reverse.
Since being symmetrically $c$-monotone is invariant with respect to reversing
the witnessing order, it does not matter which of the two orders we choose.
Overall, given the conditions
\begin{align}
&\text{for all $x<y<z\in I$}\quad|\psi(x)-\psi(y)|\leq c|\psi(x)-\psi(z)|,
  \label{L1,1}\\
&\text{for all $x<y<z\in I$}\quad|\psi(z)-\psi(y)|\leq c|\psi(x)-\psi(z)|,
  \label{L1,2}
\end{align}
we have
\begin{lem} If $f:I\to\Rset$ is continuous, then
\begin{enum}
\item $\gr$ is $c$-monotone if and only if at least one of
\eqref{L1,1}, \eqref{L1,2} holds,
\item $\gr$ is symmetrically $c$-monotone if and only if both
\eqref{L1,1} and \eqref{L1,2} hold.
\end{enum}
\end{lem}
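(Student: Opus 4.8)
The plan is to unwind the definitions and reduce everything to elementary observations about the unique linear order on a connected linearly orderable space. First I would recall the two facts already established in the excerpt: that $\gr$, being a connected linearly orderable space, carries a unique order up to reversal (by~\cite{MR0003201}), so the only candidates for a witnessing order are $\psi(t)<\psi(s)\iff t<s$ and its reverse; and that the property of being symmetrically $c$-monotone is insensitive to reversing the witnessing order. Together these mean that, whichever order we work with, the triples $x<y<z$ in $I$ are exactly the triples of points of $\gr$ in (one of the two) witnessing order, so the monotonicity inequalities for $\gr$ become precisely inequalities indexed by $x<y<z\in I$.

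For part (ii), I would argue as follows. If $\gr$ is symmetrically $c$-monotone, pick a witnessing order; by the uniqueness it is (up to reversal) the order induced by $\psi$ from the natural order of $I$. Condition~\eqref{basic2} applied to the three points $\psi(x),\psi(y),\psi(z)$ with $x<y<z$ gives both $|\psi(x)-\psi(y)|\leq c|\psi(x)-\psi(z)|$ and $|\psi(y)-\psi(z)|\leq c|\psi(x)-\psi(z)|$, i.e.~\eqref{L1,1} and \eqref{L1,2}. If instead the reversed order is the witnessing one, the same application of~\eqref{basic2} yields the same pair of inequalities after relabeling, because~\eqref{basic2} is symmetric in the roles of the two endpoints. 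Conversely, if both \eqref{L1,1} and \eqref{L1,2} hold, then for any three points of $\gr$ listed in the $\psi$-order, say $\psi(x),\psi(y),\psi(z)$ with $x<y<z$, inequality~\eqref{basic2} holds with the constant $c$; hence $\gr$ with the $\psi$-order is symmetrically $c$-monotone.

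For part (i), recall from the discussion preceding~\eqref{basic2} that $c$-monotone (i.e.~\eqref{basic1}) is equivalent to symmetrically $c$-monotone only up to the constant, but \eqref{basic1} itself is \emph{not} symmetric in the two endpoints: it constrains $d(x,y)$ but says nothing directly about $d(y,z)$. So \eqref{basic1} for the $\psi$-order is exactly \eqref{L1,1}, while \eqref{basic1} for the \emph{reversed} $\psi$-order is exactly \eqref{L1,2}. Since $\gr$ is $c$-monotone iff \eqref{basic1} holds for \emph{one} of the two admissible orders, this is iff at least one of \eqref{L1,1}, \eqref{L1,2} holds. I would spell out the translation between ``$x<y<z$ in the reversed order'' and ``$z<y<x$ in the natural order of $I$'' so that the bookkeeping of which variable plays the role of ``$x$'' and which plays the role of ``$z$'' in~\eqref{basic1} is transparent.

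The only real content, and thus the step I would be most careful about, is the invocation of the uniqueness of the order on a connected LOTS: one must be sure that $\gr$ really is connected (it is, as the continuous image of the interval $I$ under $\psi$) and linearly orderable in its metric/subspace topology, and that the cited theorem applies verbatim. Everything after that is a routine matching of quantifiers. I would also remark that continuity of $f$ is used precisely to guarantee connectedness of $\gr$ (and hence the uniqueness of the order); without it the statement would need the choice of order to be part of the hypothesis. No genuine obstacle is expected beyond presenting this bookkeeping cleanly.
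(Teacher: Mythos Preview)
Your proposal is correct and follows essentially the same approach as the paper, which presents the argument in the paragraph immediately preceding the lemma rather than in a formal proof block: connectedness of $\gr$, Eilenberg's uniqueness of the order up to reversal, and then the direct translation of~\eqref{basic1} for the two admissible orders into~\eqref{L1,1} and~\eqref{L1,2} (with the symmetric case handled via invariance under reversal). Your write-up is in fact a careful expansion of that discussion, and your explicit remark that continuity is used precisely to secure connectedness is a point the paper leaves implicit.
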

The following simple condition equivalent to monotonicity of $\gr$ will
turn useful.
\begin{defn}
Given $c>0$, say that $f$ satisfies condition $\mathsf P_c$ if
\begin{equation}\label{Pc}
  \max_{x\leq t\leq y}\abs{f(x)-f(t)}\leq c|x-y|
  \text{ whenever $x<y$ and $f(x)=f(y)$}.
  \tag{$\mathsf P_c$}
\end{equation}
\end{defn}
\begin{lem}\label{L4}
Let $f:I\to\Rset$ be a continuous function and $c\geq1$.
\begin{enum}
\item If $\gr$ is $c$-monotone, then $f$ satisfies $\mathsf P_c$,
\item if $f$ satisfies $\mathsf P_{c-1}$, then $\gr$ is symmetrically $c$-monotone.
\end{enum}
\end{lem}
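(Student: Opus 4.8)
The plan for (i) is immediate. By the lemma characterising $c$-monotonicity of $\gr$ through \eqref{L1,1} and \eqref{L1,2}, one of these two conditions holds; assume it is \eqref{L1,1}, the case of \eqref{L1,2} being entirely symmetric (replace $\psi(x)$ by $\psi(y)$ and use $f(y)=f(x)$ below). Fix $x<y$ in $I$ with $f(x)=f(y)$ and fix $t\in[x,y]$; the endpoint cases being trivial, assume $x<t<y$. Since $f(x)=f(y)$ we have $|\psi(x)-\psi(y)|=|x-y|$, while $|\psi(x)-\psi(t)|\geq|f(x)-f(t)|$ always, so feeding the triple $x<t<y$ into \eqref{L1,1} gives $|f(x)-f(t)|\leq|\psi(x)-\psi(t)|\leq c\,|\psi(x)-\psi(y)|=c\,|x-y|$. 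Maximising over $t$ yields \eqref{Pc}. (No use of $c\geq1$ is made in (i).)

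For (ii) the plan is to verify both \eqref{L1,1} and \eqref{L1,2} and then quote the same lemma; since the analogue \eqref{L1,2} follows from \eqref{L1,1} applied to the reflected function $t\mapsto f(-t)$ (which again satisfies $\mathsf{P}_{c-1}$), it suffices to prove \eqref{L1,1}. The crux is the following oscillation estimate: \emph{if $f$ satisfies $\mathsf{P}_{c-1}$ and $a<b$ lie in $I$, then $f([a,b])$ is contained in the $(c-1)(b-a)$-neighbourhood of the closed interval with endpoints $f(a)$ and $f(b)$.} To prove the upper half, put $v=\max\{f(a),f(b)\}$ and $M=\max_{[a,b]}f=f(t_0)$. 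If $M\leq v$ there is nothing to prove; otherwise $t_0\in(a,b)$, and by continuity there exist $a\leq a'<t_0<b'\leq b$ with $f(a')=f(b')=v$ — take $a'$ the last point of $[a,t_0]$ and $b'$ the first point of $[t_0,b]$ at which $f$ takes the value $v$. Then $\mathsf{P}_{c-1}$ applied to the pair $a'<b'$ bounds $M-v=|f(a')-f(t_0)|$ by $(c-1)(b'-a')\leq(c-1)(b-a)$, which is the assertion; the lower half follows on replacing $f$ by $-f$.

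Granting the estimate, fix $x<y<z$ in $I$ and apply it on $[x,z]$ at the point $y$: this gives $|f(x)-f(y)|\leq|f(x)-f(z)|+(c-1)|x-z|$, and clearly $|x-y|\leq|x-z|$. Abbreviating $\gamma=|x-z|$ and $\alpha=|f(x)-f(z)|$, we obtain $|\psi(x)-\psi(y)|^2\leq\gamma^2+\bigl(\alpha+(c-1)\gamma\bigr)^2$, so \eqref{L1,1} will follow once we check the elementary inequality $\gamma^2+(\alpha+(c-1)\gamma)^2\leq c^2(\gamma^2+\alpha^2)$ for all $\alpha,\gamma\geq0$ and $c\geq1$. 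Expanding, the difference of the two sides equals $(c-1)\bigl(2\gamma^2+(c+1)\alpha^2-2\alpha\gamma\bigr)\geq(c-1)\bigl(\gamma^2+(\gamma-\alpha)^2\bigr)\geq0$, which finishes the argument.

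The one delicate point is obtaining the \emph{sharp} constant $c$. The naive route — split according to whether $f(y)$ lies between $f(x)$ and $f(z)$, and in the remaining case estimate $|\psi(x)-\psi(y)|$ by the triangle inequality through an auxiliary point where $f$ equals $f(z)$ — loses too much and yields only $(c+1)\,|\psi(x)-\psi(z)|$ or $\sqrt{1+c^2}\,|\psi(x)-\psi(z)|$. The oscillation estimate is precisely what allows $|f(x)-f(y)|$ to be bounded directly and tightly; once it is in hand, the rest is the elementary computation above.
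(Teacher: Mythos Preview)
Your proof is correct in both parts. Part (i) is essentially identical to the paper's argument.

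For part (ii) your route genuinely differs from the paper's. The paper does not introduce an oscillation estimate or any algebraic inequality. Instead, in the representative case $f(x)\leq f(z)\leq f(y)$ it picks $w\in[x,y]$ with $f(w)=f(z)$, applies $\mathsf P_{c-1}$ to the pair $w<z$ at $t=y$ to get $|f(z)-f(y)|\leq(c-1)|z-w|\leq(c-1)|x-z|$, and then uses the vector inequality
\[
  |\psi(x)-\psi(y)|\leq|(x-z,f(x)-f(y))|\leq|\psi(x)-\psi(z)|+|f(z)-f(y)|\leq c\,|\psi(x)-\psi(z)|.
\]
The key trick is replacing $x-y$ by $x-z$ in the first coordinate \emph{before} splitting via the triangle inequality; this is precisely what rescues the ``naive route'' you warn against and delivers the sharp constant $c$ without any computation. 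Your approach trades this geometric step for a clean scalar lemma (the oscillation bound, valid on any subinterval and for any $y$, with no case distinction) followed by the quadratic inequality $\gamma^2+(\alpha+(c-1)\gamma)^2\leq c^2(\gamma^2+\alpha^2)$. The paper's argument is shorter and avoids algebra; yours isolates a reusable oscillation lemma and handles all configurations of $f(x),f(y),f(z)$ uniformly. Either is fine.
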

\begin{proof}
(i)
Let $x<y$ satisfy $f(x)=f(y)$. $c$-monotonicity of $\gr$ yields for all $t\in[x,y]$
$$
  \abs{f(x)-f(t)}\leq\abs{\psi(x)-\psi(t)}\leq c\abs{\psi(x)-\psi(y)}=c\abs{x-y}.
$$

(ii)
We prove only condition~\eqref{L1,1}, as
condition~\eqref{L1,2} is proved in the same manner.
Let $x<y<z\in I$. Suppose that $f(x)\leq f(z)\leq f(y)$, all other cases are
trivial or similar.
Find $w\in [x,y]$ such that $f(w)=f(z)$.
Condition $\mathsf P_{c-1}$ yields $\abs{f(z)-f(y)}\leq (c-1)|z-w|$. Therefore
\begin{align*}
  \abs{\psi(x)-\psi(y)}&\leq\abs{(x-z,f(x)-f(y))}\\
  &\leq\abs{\psi(x)-\psi(z)}+\abs{(z-z,f(z)-f(y))}\\
  &\leq\abs{\psi(x)-\psi(z)}+(c-1)\abs{z-w}
  \leq c\abs{\psi(x)-\psi(z)}.
  \qedhere
\end{align*}
\end{proof}

Monotonicity and \si monotonicity are clearly global properties.
It turns out that the following pointwise counterpart of monotonicity
is worth investigation.
\begin{defn}
Let $f:I\to\Rset$ be a continuous function and $c\geq1$.

\textbullet{}
Say that $\gr$ is \emph{$c$-monotone at $y\in\gr$} if there is a neighborhood
$U\subs\gr$ of $y$ such that if $x<y<z$ and $x,z\in U$, then
$\abs{x-y}\leq c\abs{x-z}$, and \emph{monotone at $y$} if it is $c$-monotone at
$y$ for some $c\geq1$.
The set of all points where $\gr$ is $c$-monotone is denoted $\simon_c(\gr)$.
The set of all points where $\gr$ is monotone is denoted $\simon(\gr)$.

\textbullet{}
If $\gr$ is $c$-monotone (monotone) at $\psi_f(y)$, we call $y$ an
\emph{$\MM_c$-point} (\emph{$\MM$-point}) of $f$.
The set of all $\MM_c$-points of $f$ is denoted $\MM_c(f)$ or just $\MM_c$.
The set of all $\MM$-points of $f$ is denoted $\MM(f)$ or just $\MM$.
\end{defn}
It is clear that $\simon(\gr)=\gr|\MM(f)$.
Since the natural parametrization $\psi_f$ is a homeomorphism, it thus makes
no difference whether we investigate topological properties of $\simon(\gr)$
or $\MM(f)$.

Obviously, $y\in I$ is an $\MM_c$-point if and only if there is $\eps>0$
such that
\begin{equation}\label{pwm}
  \text{for all $x\in(y-\eps)$, $z\in(y+\eps)$}
  \quad\abs{\psi(x)-\psi(y)}\leq c\abs{\psi(x)-\psi(z)}.
\end{equation}
By the reasoning preceding~\eqref{basic2}, the inequality
$\abs{\psi(x)-\psi(y)}\leq c\abs{\psi(x)-\psi(z)}$ in condition~\eqref{pwm}
can be replaced with $\abs{\psi(y)-\psi(z)}\leq c'\abs{\psi(x)-\psi(z)}$,
possibly with another constant $c'$.
Thus the definition of $\MM$-point is ``symmetric'', in that it is invariant
under reversing the orientation of $x$- or $y$-axis.

Another equivalent definition: $y$ is an $\MM$-point if and only if there
is $c$ and $\eps>0$ such that
\begin{equation}\label{pwm2}
  \text{for all $x\in(y-\eps)$, $z\in(y+\eps)$}
  \quad\abs{f(x)-f(y)}\leq c\bigl(\abs{f(x)-f(z)}+\abs{z-x}\bigr).
\end{equation}

Let us clarify the relation of monotonicity, \si monotonicity,
pointwise monotonicity and $\MM$-points.
The proof of the following is straightforward.
\begin{prop}\label{meager2}
If $f:I\to\Rset$ is continuous, then
\begin{enum}
\item $\MM$ and $\MM_c$ are $F_\sigma$-sets, and so are $\simon(\gr)$ and
$\simon_c(\gr)$,
\item $\simon(\gr)=\gr|\MM$ is \si monotone,
\item $\simon_c(\gr)=\gr|\MM_c$ is a countable union of closed $c$-monotone sets.
\end{enum}
\end{prop}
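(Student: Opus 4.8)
The plan is to prove each of the three parts of Proposition~\ref{meager2} in turn, with most of the work going into the $F_\sigma$ claim in~(i); parts~(ii) and~(iii) then follow quickly.

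First I would handle $\simon_c(\gr)$ and $\MM_c$. Fix $c\geq1$. For a point $y\in I$ to be an $\MM_c$-point, the condition~\eqref{pwm} requires a single witnessing $\eps>0$; so it is natural to stratify by the ``size'' of the witness. For each $n\in\Nset$ define
\begin{equation*}
F_n=\Bigl\{y\in I:\ \text{for all }x,z\in I\text{ with }y-\tfrac1n<x\leq y\leq z<y+\tfrac1n,\ \abs{\psi(x)-\psi(y)}\leq c\abs{\psi(x)-\psi(z)}\Bigr\}.
\end{equation*}
Then $\MM_c=\bigcup_n F_n$ by definition of an $\MM_c$-point (allowing $x=y$ or $y=z$ in the defining inequality is harmless since those instances hold trivially). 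The key step is to check that each $F_n$ is closed. This is where continuity of $f$, hence of $\psi$, is used: if $y_k\in F_n$ and $y_k\to y$, take any $x,z$ with $y-\tfrac1n<x\leq y\leq z<y+\tfrac1n$; for all large $k$ one has $y_k-\tfrac1n<x$ and $z<y_k+\tfrac1n$, but one must be slightly careful because the strict inequalities $x\leq y_k$ and $y_k\leq z$ need not persist. The clean fix is to perturb: pick sequences $x_k\to x$, $z_k\to z$ with $x_k\leq y_k\leq z_k$ and $x_k,z_k$ inside the $\tfrac1n$-ball around $y_k$ (possible for large $k$ since $x<y$ or $x=y$, and similarly on the right; the endpoint case $x=y$ forces $x_k=y_k$, etc.), apply the defining inequality for $F_n$ to $x_k,y_k,z_k$, and pass to the limit using continuity of $\psi$. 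Thus $F_n$ is closed, so $\MM_c$ is $F_\sigma$, and consequently $\MM=\bigcup_{c\in\Nset}\MM_c$ is $F_\sigma$ as well. Since $\psi_f$ is a homeomorphism of $I$ onto $\gr$, the images $\simon_c(\gr)=\gr|\MM_c$ and $\simon(\gr)=\gr|\MM$ are $F_\sigma$ in $\gr$; this proves~(i).

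For~(iii), I claim that for each $n$ the set $\gr|\overline{F_n}\,$—wait, $F_n$ is already closed, so simply $\gr|F_n$ is closed in $\gr$—is a $c$-monotone metric space under the order inherited from $\gr$. Indeed, if $\psi(x),\psi(y),\psi(z)\in\gr|F_n$ with $x<y<z$, we cannot directly apply the defining inequality unless $x,z$ are within $\tfrac1n$ of $y$; so instead I would first intersect with small intervals: write $F_n=\bigcup_j (F_n\cap J_{n,j})$ where $\{J_{n,j}\}_j$ is a countable cover of $I$ by intervals of length $<\tfrac1n$. Then for $x<y<z$ all in one $F_n\cap J_{n,j}$, we have $\abs{x-y}<\tfrac1n$ and $\abs{z-y}<\tfrac1n$, so applying the $F_n$-condition at the point $y$ gives $\abs{\psi(x)-\psi(y)}\leq c\abs{\psi(x)-\psi(z)}$, i.e.~\eqref{L1,1} holds on this piece, and by Lemma~1 this piece is $c$-monotone. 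Hence $\simon_c(\gr)=\bigcup_{n,j}\gr|(F_n\cap J_{n,j})$ is a countable union of closed $c$-monotone sets.

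Finally, part~(ii): $\simon(\gr)=\gr|\MM=\bigcup_{c\in\Nset}\simon_c(\gr)$, and by~(iii) each $\simon_c(\gr)$ is a countable union of ($c$-)monotone subspaces, so $\simon(\gr)$ is a countable union of monotone subspaces, i.e.~\si monotone by definition. I expect the only genuinely delicate point to be the limiting argument showing $F_n$ is closed—specifically handling the non-strict versus strict inequalities at the endpoints $x=y$ and $y=z$; everything else is bookkeeping with the definitions and Lemma~1.
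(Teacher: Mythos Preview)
Your argument is correct and is exactly the routine verification the paper has in mind; the paper itself omits the proof entirely, stating only that it is ``straightforward.'' One small simplification: your perturbation in the closedness argument for $F_n$ is unnecessary---if $x<y<z$ strictly, then for all large $k$ one has $y_k-\tfrac1n<x<y_k<z<y_k+\tfrac1n$ directly, and the boundary cases $x=y$ or $y=z$ are trivial since $c\geq1$; also, in part~(iii) you should take the $J_{n,j}$ to be \emph{closed} intervals so that $F_n\cap J_{n,j}$ is closed, matching the statement.
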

Needles to say that if a continuous function has a monotone graph,
then all points are $\MM$-points. However, there is a continuous function
$f$ on $[0,1]$ with $\MM_1(f)=[0,1]$, i.e.~$\gr$ is $1$-monotone at each point,
but $\gr$ is not monotone:
let $f(x)=\abs x^{3/2}\sin\frac1x$ for $x\neq0$, $f(0)=0$.
It is easy to check that condition~\eqref{Pc} fails for
each $c$ and thus $\gr$ is not monotone. On the other hand, $f$
is differentiable everywhere and hence, by Theorem~\ref{thmMM1} below,
all points are $\MM_1$-points.

By~\cite[Corollary 2.6]{NZ2}, every monotone set has a monotone closure.
Using this fact, the above proposition and Baire category theorem
one can easily prove the following facts on relation between monotonicity,
\si monotonicity and pointwise monotonicity.
\begin{lem}\label{baire}
If $f:I\to\Rset$ is continuous, with a \si monotone graph, then
for any interval $J\subs I$ there is
a subinterval $J'\subs I$ such that $\gr|J'$ is monotone.
\end{lem}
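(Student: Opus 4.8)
The plan is to combine Proposition~\ref{meager2}(ii), the quoted fact that every \si monotone set is contained in a \si monotone $F_\sigma$-set, the quoted fact (\cite[Corollary 2.6]{NZ2}) that a monotone set has monotone closure, and the Baire category theorem. First I would fix an interval $J\subs I$ and look at the compact set $\gr|\cl J$ (shrinking $J$ slightly if needed so that $\cl J\subs I$, which is harmless since we are free to pass to a subinterval). By hypothesis $\gr$ is \si monotone, hence so is the subspace $\gr|\cl J$, so we may write $\gr|\cl J=\bigcup_{n}A_n$ with each $A_n$ monotone; replacing each $A_n$ by its closure in $\gr|\cl J$, which is still monotone by \cite[Corollary 2.6]{NZ2} and still covers, we may assume each $A_n$ is closed in the complete metric space $\gr|\cl J$.

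Now I would apply the Baire category theorem to $\gr|\cl J$: some $A_{n_0}$ is non-meager, hence (being closed) has nonempty interior in $\gr|\cl J$. Thus there is an open subset of $\gr|\cl J$ contained in a monotone set. Pulling this back through the natural parametrization $\psi_f$, which is a homeomorphism of $\cl J$ onto $\gr|\cl J$, I obtain a (relatively) open subset of $\cl J$ whose image under $\psi_f$ is monotone; shrinking to an open subinterval $J'$ with $\cl{J'}\subs J$ and $\gr|J'\subs A_{n_0}$, the set $\gr|J'$ is a subset of a monotone set and therefore monotone. This $J'\subs I$ is the desired subinterval.

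The only point that needs a little care is the passage to a complete (Polish, even compact) space so that Baire category applies and "non-meager closed set" gives "nonempty interior"; this is exactly why I work with $\gr|\cl J$ rather than $\gr|J$, and why I invoke \cite[Corollary 2.6]{NZ2} to keep the pieces closed after taking closures. Everything else is routine: the parametrization $\psi_f$ is a homeomorphism (noted in Section~\ref{sec:PRE}), monotone subsets of monotone sets are monotone (immediate from the definition), and an open subset of $\cl J$ contains an open subinterval. I do not anticipate a genuine obstacle here — the statement is, as the text says, "easily proved" — the write-up is mostly a matter of assembling the quoted ingredients in the right order.
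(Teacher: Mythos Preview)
Your proposal is correct and follows essentially the same route the paper indicates: decompose the (closed piece of the) graph into countably many monotone sets, pass to closures using \cite[Corollary 2.6]{NZ2}, and apply the Baire category theorem to extract a piece with nonempty interior, which pulls back to the desired subinterval via the homeomorphism $\psi_f$. Your extra care in working over $\cl J$ to guarantee completeness is appropriate; the mention of Proposition~\ref{meager2}(ii) in your plan is not actually needed for the argument you wrote, but this is harmless.
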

\begin{coro}\label{intM}
Let $f:I\to\Rset$ be continuous.
\begin{enum}
\item If $f$ has a \si monotone graph, then $\inter\MM$ is dense in $I$,
i.e.~$\simon(\gr)$ contains an open dense subset of $\gr$.
\item If all points of $I$ are $\MM$-points, i.e.~if $\gr$ is monotone
at each point, then $\gr$ is \si monotone.
\end{enum}
\end{coro}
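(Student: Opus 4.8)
The two parts follow by combining Proposition \ref{meager2}, Lemma \ref{baire}, and the Baire category theorem. I will treat them in turn.

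For (i): assume $f$ has a $\sigma$-monotone graph. The goal is to show $\inter\MM$ is dense in $I$, equivalently that every subinterval $J\subs I$ meets $\inter\MM$. Fix such a $J$. By Lemma \ref{baire} there is a subinterval $J'\subs J$ (note: Lemma \ref{baire} gives $J'\subs I$, but the proof clearly produces $J'$ inside the given interval; I would either restate this or just apply it to the restriction $f\rest J$, which again has $\sigma$-monotone graph) such that $\gr|J'$ is monotone. Then every point of the \emph{interior} of $J'$ is an $\MM$-point: if $y\in\inter J'$, take $\eps>0$ small enough that $(y-\eps,y+\eps)\subs J'$, and the monotonicity constant of $\gr|J'$ witnesses \eqref{pwm} for this $\eps$. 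Hence $\inter J'\subs\MM$, so $\inter J'\subs\inter\MM$, and $\inter\MM$ meets $J$. Since $\simon(\gr)=\gr|\MM$ and $\psi_f$ is a homeomorphism, $\inter\MM$ being open dense in $I$ translates to $\simon(\gr)$ containing an open dense subset of $\gr$.

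For (ii): assume every point of $I$ is an $\MM$-point, i.e.\ $\MM=I$. By Proposition \ref{meager2}(i), $\MM=\bigcup_c \MM_c$ (the union over, say, $c\in\Nset$ suffices, since each $\MM_c$ grows with $c$), and each $\MM_c$ is $F_\sigma$, hence so is the whole family a countable union of closed sets covering $I$. By the Baire category theorem applied to the (completely metrizable, since $I$ is an interval) space $I$, some closed piece $F$ of this countable cover has nonempty interior; thus some subinterval $J_0\subs I$ satisfies $J_0\subs\MM_c$ for a fixed $c$. I claim $\gr|J_0$ is then monotone with a witnessing constant depending only on $c$: this is the one genuinely nontrivial step. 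Cover $\cl{J_0}$ by finitely many of the $\eps$-neighborhoods furnished by the $\MM_c$-point condition \eqref{pwm} at points of a suitable finite subset, use a Lebesgue-number argument to get a single $\delta>0$ such that \eqref{pwm} holds uniformly whenever $|x-z|<\delta$, and then patch together a chain of such overlapping windows to handle arbitrary triples $x<y<z$ in $J_0$ — the patching inflates the constant by a factor controlled by the (finite) number of windows, which is itself controlled by $\diam(J_0)/\delta$. So $\gr|J_0$ is monotone, hence (by the observation following Proposition \ref{meager2}, or directly) $I$ is covered by countably many subintervals $J$ with $\gr|J$ monotone — run the same Baire/compactness argument on each remaining closed piece, or simply note that every point of $I$ lies in the interior of such a $J_0$ once we know the argument works locally around every point, so the $\gr|J$ form a countable (by Lindelöf) open-interval cover of $I$ with each $\gr|J$ monotone. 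Therefore $\gr=\bigcup \gr|J$ exhibits $\gr$ as $\sigma$-monotone.

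The main obstacle is the uniform-constant/patching step in (ii): pointwise $c$-monotonicity at each point of $J_0$ must be upgraded to genuine $c'$-monotonicity of $\gr|J_0$ for a single constant $c'$. Everything else is a routine application of Baire category and the homeomorphism $\psi_f$. One should be slightly careful that the windows in \eqref{pwm} are two-sided (a left interval $(y-\eps,y)$ and a right interval $(y,y+\eps)$), so the chaining must alternate appropriately; but since $\gr$ is connected and the order on it is the natural one, concatenating the estimates along a finite chain $x=t_0<t_1<\dots<t_n=z$ with consecutive points within a common window goes through by repeated use of the triangle inequality exactly as in the verification that \eqref{basic1} implies \eqref{basic2}.
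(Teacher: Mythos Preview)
Your argument for (i) is correct and is essentially the paper's intended route via Lemma~\ref{baire}.

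Your argument for (ii), however, has a genuine gap. The key claim --- that if $J_0\subs\MM_c$ then $\gr|J_0$ is monotone for some constant $c'$ --- is simply false. The paper gives an explicit counterexample immediately before Lemma~\ref{baire}: the function $f(x)=\abs{x}^{3/2}\sin\frac1x$ (with $f(0)=0$) has $\MM_1(f)=[0,1]$, yet $\gr$ is not $c'$-monotone for any $c'$ (condition $\mathsf P_{c'}$ fails for every $c'$). So there is an entire interval contained in $\MM_1$ on which the graph is not monotone at all. Both ingredients of your patching argument break down on this example: the $\eps$ in condition~\eqref{pwm} shrinks to $0$ as $y\to 0$, so no uniform $\delta$ exists and the Lebesgue-number step fails; and even locally, the three-point inequality in~\eqref{pwm} does not chain along $x=t_0<\dots<t_n=z$ the way you suggest --- the passage from \eqref{basic1} to \eqref{basic2} is a single triangle-inequality step, not an iterable composition.

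The intended proof of (ii) is immediate from Proposition~\ref{meager2}(ii): that proposition already asserts that $\simon(\gr)=\gr|\MM$ is $\sigma$-monotone for \emph{any} continuous $f$. If $\MM=I$, then $\gr=\gr|\MM$ is $\sigma$-monotone, and you are done. The work you are trying to do by hand (uniformizing $\eps$, chaining) is exactly what is packaged into the straightforward proof of Proposition~\ref{meager2}(iii): one writes $\MM_c$ as the countable union over rational $\eps>0$ of the closed sets $\MM_{c,\eps}$ where \eqref{pwm} holds with that fixed $\eps$, and then further chops each $\MM_{c,\eps}$ into pieces of diameter less than $\eps$; each such piece carries a $c$-monotone subgraph. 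No patching across windows is ever needed.
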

Part (i) of this corollary cannot be strengthened:
As shown in~\ref{chdim}, there is a continuous function $f$ on $[0,1]$
with a \si monotone graph, and a perfect set of non-$\MM$-points.

There is a profound connection between the set of $\MM$-points and monotone
subspaces of $\gr$. Its proof is straightforward.

\begin{prop}\label{meager3}
If $f:I\to\Rset$ is continuous, then the following are equivalent.
\begin{enum}
\item Every monotone set $M\subs\gr$ is nowhere dense in $\gr$,
\item every monotone set $M\subs\gr$ is meager in $\gr$,
\item $\MM(f)$ is meager in $I$,
\item $\inter\MM(f)=\emptyset$.
\end{enum}
\end{prop}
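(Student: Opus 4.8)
The plan is to prove the cycle (i)$\Rightarrow$(ii)$\Rightarrow$(iii)$\Rightarrow$(iv)$\Rightarrow$(i). Throughout I use that the natural parametrization $\psi_f\colon I\to\gr$ is a homeomorphism, so that meagerness, nowhere density and having empty interior transfer freely between subsets of $I$ and of $\gr$, and that $\simon(\gr)=\gr|\MM(f)$.

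The implications (i)$\Rightarrow$(ii) and (iii)$\Rightarrow$(iv) are immediate: a nowhere dense set is meager, and a meager subset of the Baire space $I$ has empty interior. For (ii)$\Rightarrow$(iii), Proposition~\ref{meager2}(ii) gives a decomposition $\simon(\gr)=\bigcup_n M_n$ with each $M_n$ monotone; by hypothesis each $M_n$ is meager in $\gr$, hence so is $\simon(\gr)$, and transporting through $\psi_f$ shows that $\MM(f)$ is meager in $I$.

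The one implication carrying content is (iv)$\Rightarrow$(i). Assume $\inter\MM(f)=\emptyset$ and suppose, for a contradiction, that some monotone $M\subs\gr$ is not nowhere dense in $\gr$; equivalently, the closure of $M$ in $\gr$ has nonempty interior. Let $V$ be that interior, a nonempty open subset of $\gr$. Then $V\cap M$ is a monotone subspace dense in $V$, so $V$ itself is monotone by the fact recalled in the introduction that a metric space with a dense monotone subspace is monotone. Now $\psi_f^{-1}(V)$ is a nonempty open subset of $I$; write it as the union of its connected components $J$, each an interval relatively open in $I$. For each such $J$ the set $\gr|J=\psi_f(J)$ is a monotone subspace of $V$ and is precisely the graph of the continuous function $f\rest J$ on the interval $J$; hence, as noted in Section~\ref{sec:PRE}, every point of $J$ is an $\MM$-point of $f\rest J$, and since condition~\eqref{pwm} is local this makes every point of $J$ an $\MM$-point of $f$. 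Thus $\psi_f^{-1}(V)\subs\MM(f)$, and since $\psi_f^{-1}(V)$ is nonempty and open in $I$ this contradicts $\inter\MM(f)=\emptyset$.

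The only step that is more than bookkeeping is the passage from a monotone neighborhood of a point to that point being an $\MM$-point: the subtlety is that the witnessing order of such a neighborhood need not a priori be compatible with the order induced by $\psi_f$. This is exactly why one restricts to the components $J$ above -- there $\gr|J$ is a \emph{connected} monotone graph, so by the uniqueness-of-order observation at the start of Section~\ref{sec:PRE} its witnessing order is the $\psi_f$-order up to reversal, and being an $\MM$-point is insensitive to that reversal (passing, if needed, to the symmetric form~\eqref{basic2} at the cost of increasing the constant by $1$).
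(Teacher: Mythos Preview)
Your argument is correct. The paper does not actually give a proof of this proposition --- it simply declares ``Its proof is straightforward'' --- so there is nothing to compare against; you have supplied the details the authors omitted, and your handling of the one nontrivial point (passing to connected components so that the Eilenberg uniqueness-of-order result applies) is exactly the right way to bridge monotonicity of a neighborhood and the $\MM$-point condition~\eqref{pwm}.
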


%%%%%%%%%%%%%%%%%%%%%%%%%%%%%%%%%%%%%%%%%%%%%%%%%%%%%%%%%%%%%%%%%%%%%%%%%%%%%%%%
%%%%%%%%%%%%%%%%%%%%%%%%%%%%%%%%%%%%%%%%%%%%%%%%%%%%%%%%%%%%%%%%%%%%%%%%%%%%%%%%
\section{Differentiability vs.~pointwise monotonicity}
\label{sec:ONDREJ}
%%%%%%%%%%%%%%%%%%%%%%%%%%%%%%%%%%%%%%%%%%%%%%%%%%%%%%%%%%%%%%%%%%%%%%%%%%%%%%%%
%%%%%%%%%%%%%%%%%%%%%%%%%%%%%%%%%%%%%%%%%%%%%%%%%%%%%%%%%%%%%%%%%%%%%%%%%%%%%%%%

We now investigate if pointwise monotonicity is related to differentiability.
Recall definitions of derivatives and related notation.
The \emph{upper right Dini derivative} of a function $f:I\to\Rset$ at point $x$
is denoted and defined by
$\urd f(x)=\limsup_{y\to x+}\frac{f(y)-f(x)}{y-x}$.
The other three Dini derivatives $\lrd f(x)$, $\uld f(x)$ and $\lld f(x)$
are defined likewise. If the four Dini
derivatives at $x$ equal, the common value is of course the derivative $f'(x)$.
If the two right Dini derivatives
at $x$ are equal, the common value is called the \emph{right
derivative} and denoted $\rd f(x)$; and likewise for the left side.
The set of points where the derivative of $f$ exists (infinite values are allowed)
is denoted $\DD(f)$ or just $\DD$.

A point $x\in I$ is called a \emph{knot point of $f$} if
$\uld{f}(x)=\urd{f}(x)=\infty$ and $\lld{f}(x)=\lrd{f}(x)=-\infty$.
The set of knot points of $f$ is denoted $\KK(f)$ or just $\KK$.

The \emph{approximate upper right Dini derivative} of $f$ at point
$x$ is denoted and defined by
$$
  \urdapp f(x)=
  \inf\Bigl\{t:\lim_{\delta\to0+}\frac1\delta\,
  \leb\bigl(\bigl\{y\in(x,x+\delta):\tfrac{f(y)-f(x)}{y-x}
  \leq t\bigr\}\bigr)=1\Bigr\}.
$$
The other three approximate Dini derivatives $\lrdapp f(x)$, $\uldapp f(x)$
and $\lldapp f(x)$ are defined likewise, as well as the
\emph{approximate derivative}
$\dapp f(x)$ and \emph{right} and \emph{left approximate derivatives}.
The set of points where the approximate derivative of $f$ exists
is denoted $\DDapp(f)$ or just $\DDapp$.
Approximate knot points are defined in the obvious way. The set of approximate knot
points of $f$ is denoted $\KKapp(f)$ or just $\KKapp$.
\begin{lem}\label{MMM}
Let $f:I\to\Rset$ be continuous and $y\in I$.
\begin{enum}
\item If $y$ is not an $\MM$-point, then $\urd f(y)=-\lld f(y)=\infty$,
or $\uld f(y)=-\lrd f(y)=\infty$.
\item If $\lld f(y)=-\infty$ and $\lrdapp f(y)=\infty$, then $y$ is not an
$\MM$-point.
\end{enum}
\end{lem}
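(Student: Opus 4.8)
The plan is to handle the two parts separately, in each case translating the hypothesis into sequences of points witnessing the failure (or the presence) of the inequality \eqref{pwm}, and then reading off the Dini derivatives from the resulting estimates on differences.

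\emph{Part (i).} Since $y\notin\MM(f)$, the failure of \eqref{pwm} applied with $c=n$ and $\eps=1/n$ yields $x_n\to y-$ and $z_n\to y+$ with $|\psi(x_n)-\psi(y)|>n\,|\psi(x_n)-\psi(z_n)|$. Writing $a_n=y-x_n>0$, $b_n=z_n-y>0$, $p_n=f(x_n)-f(y)$ and $q_n=f(z_n)-f(y)$, this reads $a_n^2+p_n^2>n^2\bigl((a_n+b_n)^2+(p_n-q_n)^2\bigr)$. From $(a_n+b_n)^2\ge a_n^2$ I get $p_n^2>(n^2-1)a_n^2$; substituting this into the bound coming from $(a_n+b_n)^2\ge b_n^2$ gives $p_n^2>(n^2-1)b_n^2$; and the $(p_n-q_n)^2$ term gives $|p_n-q_n|<|p_n|/\sqrt{n^2-1}$, so that $q_n$ has the same sign as $p_n$ and $|q_n|>(\sqrt{n^2-1}-1)\,b_n$. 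Passing to a subsequence on which $p_n$ keeps a constant sign: if $p_n>0$ then $\frac{f(x_n)-f(y)}{x_n-y}=-p_n/a_n\to-\infty$ and $\frac{f(z_n)-f(y)}{z_n-y}=q_n/b_n\to+\infty$, so $\urd f(y)=-\lld f(y)=\infty$; if $p_n<0$ the same computation gives $\uld f(y)=-\lrd f(y)=\infty$. The only point that needs a little care here is noticing that one must also control $q_n$ (not just $p_n$), which is exactly what the $(p_n-q_n)$-term provides.

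\emph{Part (ii).} First extract a growth estimate from $\lrdapp f(y)=\infty$. Put $h(s)=\sup_{z\in(y,y+s)}\bigl(f(z)-f(y)\bigr)$; this is non-decreasing, left-continuous, tends to $0$ as $s\to0+$ by continuity, and satisfies $h(s)/s\to\infty$: fixing $t>0$, the set $\{z\in(y,y+\delta):f(z)-f(y)\ge t(z-y)\}$ has measure exceeding $\tfrac34\delta$ for $\delta$ small, hence meets $(y+\delta/2,y+\delta)$, producing $z^*$ with $f(z^*)-f(y)>t\delta/2$ and so $h(\delta)/\delta>t/2$. Next, $\lld f(y)=-\infty$ gives $x_n\to y-$ with $\frac{f(x_n)-f(y)}{x_n-y}\to-\infty$; with $a_n=y-x_n$ and $p_n=f(x_n)-f(y)$ this means $p_n>0$ eventually, $p_n\to0$ and $p_n/a_n\to\infty$. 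Now fix $c$ and $\eps$. For large $n$ set $s_n=\inf\{s>0:h(s)\ge p_n\}$; since $h(0+)=0<p_n<h(\sigma)$ for a suitable fixed small $\sigma$, we get $s_n\in(0,\eps)$, $s_n\to0$, and $p_n\ge h(s_n)$, whence $p_n/s_n\ge h(s_n)/s_n\to\infty$. Let $z_n=\inf\{z>y:f(z)-f(y)\ge p_n\}$; continuity forces $z_n>y$ and $f(z_n)-f(y)=p_n$, and if $z_n-y>2s_n$ then $f-f(y)<p_n$ on $(y,y+2s_n)$ while $h(2s_n)\ge p_n$ forces $f-f(y)$ to approach $p_n$ at the right endpoint, i.e. $f(y+2s_n)-f(y)=p_n$, contradicting the definition of $z_n$; hence $z_n-y\le 2s_n$ and $p_n/(z_n-y)\to\infty$. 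Finally, since $f(x_n)=f(z_n)$ we have $|\psi(x_n)-\psi(z_n)|=|x_n-z_n|=a_n+(z_n-y)$ and $|\psi(x_n)-\psi(y)|\ge p_n$, so
\[
  \frac{|\psi(x_n)-\psi(y)|}{|\psi(x_n)-\psi(z_n)|}\ \ge\ \frac{p_n}{a_n+(z_n-y)}\ \longrightarrow\ \infty ,
\]
which exceeds $c$ for $n$ large with $x_n\in(y-\eps,y)$ and $z_n\in(y,y+\eps)$; thus \eqref{pwm} fails for every $c,\eps$ and $y\notin\MM(f)$. I expect the main obstacle to be this last step — locating $z_n$ close enough to $y$ (the estimate $z_n-y\le2s_n$) so that the ratio genuinely blows up — together with the observation that the exact growth rate $h(s)/s\to\infty$ coming from $\lrdapp f(y)=\infty$ is precisely what yields $p_n/s_n\to\infty$.
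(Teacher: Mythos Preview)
Your proof is correct. Part (i) is essentially the paper's argument, translated from condition~\eqref{pwm2} to condition~\eqref{pwm}; the estimates you extract on $p_n/a_n$, $p_n/b_n$ and $|p_n-q_n|$ are exactly the content of the paper's computation.

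In part (ii) the paper proceeds more directly: assuming $y$ is an $\MM$-point with constant $c$, it fixes $a=4c$, uses the density statement coming from $\lrdapp f(y)>a$ to locate, for a suitable small $s$, a point $t\in\bigl(s(1-\tfrac1a),s\bigr)$ with $f(y+t)-f(y)>at$, and pairs it with $x\in(y-s,y)$ satisfying $f(x)-f(y)=as$ (obtained from $\lld f(y)<-a$ via the intermediate value theorem) to violate~\eqref{pwm2} by the explicit margin $a/3>c$. Your route through the auxiliary function $h(s)=\sup_{(y,y+s)}(f-f(y))$ and the growth rate $h(s)/s\to\infty$ is a clean repackaging of the same mechanism; choosing $z_n$ so that $f(z_n)=f(x_n)$ exactly makes $\psi(x_n)-\psi(z_n)$ horizontal and streamlines the final ratio estimate. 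One small simplification: since $f$ is continuous, $h$ is in fact continuous (not merely left-continuous), so $h(s_n)=p_n$ and the maximum is attained in $[y,y+s_n]$, giving $z_n-y\leq s_n$ immediately without the limiting argument at $y+2s_n$.
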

\begin{proof}
(i) Let $y=f(y)=0$. Since $0$ is not an $\MM$-point, it follows
from~\eqref{pwm2} that there are sequences $x_n\nearrow 0$ and $z_n\searrow 0$
such that
\begin{equation}\label{densx2}
  \abs{f(x_n)}\geq n\bigl(\abs{f(x_n)-f(z_n)}+\abs{x_n-z_n}\bigr).
\end{equation}
In particular, $\abs{f(x_n)}\geq n\abs{x_n-z_n}\geq n\abs{x_n}$.
\emph{Mutatis mutandis} we may assume that all $f(x_n)$'s have the same sign.
Suppose that $f(x_n)>0$ for all $n$; the other case is treated likewise.
Hence $f(x_n)\geq n\abs{x_n}$. Clearly
$f(z_n)\geq f(x_n)-\abs{f(z_n)-f(x_n)}$, and~\eqref{densx2} yields
$\abs{f(z_n)-f(x_n)}\leq\frac1n f(x_n)$. Therefore $f(z_n)\geq f(x_n)(1-\frac1n)$.
Apply~\eqref{densx2} again to get $f(z_n)\geq n(1-\frac1n)z_n=(n-1)z_n$.
In summary, $f(x_n)\geq n\abs{x_n}$ and $f(z_n)\geq(n-1)z_n$, which is enough for
$\urd f(y)=-\lld f(y)=\infty$.

(ii) Suppose for contrary that $y$ is an $\MM$-point and assume without loss
of generality that $y=f(y)=0$. Let $\eps$ and $c$ be such that~\eqref{pwm2} holds.
Let $a=4c$ and $\beta=1/a$.

Since $\lrdapp f(0)>a$, there is $\delta<\eps$ such that for all $s<\delta$
\begin{equation}\label{densx}
  \leb\bigl\{t\in(0,s):\tfrac{f(t)}{t}>a\bigr\}>s(1-\beta).
\end{equation}
Since $\lld f(0)<-a$, there is $s\in(0,\delta)$ such that $\frac{f(-s)}{s}\geq a$.
Therefore there is $x\in[-s,0)$ such that $f(x)=as$.
Now use~\eqref{densx} to conclude that there is $t\in(s(1-\beta),s)$ such that
$\frac{f(t)}{t}>a$, i.e.~$f(t)>at$, and choose $z\in(0,t)$ such that
$f(z)=at$. Clearly $x\in(-\eps,0)$ and $z\in(0,\eps)$.
However, $\abs{f(x)-f(y)}=f(x)=as$,
$\abs{f(x)-f(z)}=as-at\leq as-as(1-\beta)=as\beta=s$ and $\abs{z-x}\leq2s$.
Thus~\eqref{pwm2} yields
$as\leq c(s+2s)$, which is contradicted by $a=4c$.
\end{proof}

\begin{thm}\label{thmMM}
If $f:I\to\Rset$ is continuous, then
\begin{enum}
\item $\DD(f)\subs\MM(f)$,
\item there is a set $E\subs I$ such that $\len(\gr|E)=0$ and
$\MM(f)\subs\DDapp(f)\cup\KKapp(f)\cup E$.
In particular, almost every $\MM$-point $x\notin\DDapp(f)$ is a knot point.
\end{enum}
\end{thm}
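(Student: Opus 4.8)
The plan is to obtain (i) directly from Lemma~\ref{MMM}(i) and (ii) from Lemma~\ref{MMM}(ii) combined with classical differentiation theory.

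\emph{Part (i).} Let $y\in\DD(f)$. If $f'(y)$ is finite, all four Dini derivatives of $f$ at $y$ are finite, so neither $\urd f(y)=-\lld f(y)=\infty$ nor $\uld f(y)=-\lrd f(y)=\infty$ holds; by the contrapositive of Lemma~\ref{MMM}(i), $y\in\MM(f)$. If $f'(y)=\infty$ (the case $-\infty$ is symmetric), then $\urd f(y)=\uld f(y)=\lrd f(y)=\lld f(y)=\infty$, so $-\lld f(y)=-\infty\neq\infty$ and $-\lrd f(y)=-\infty\neq\infty$, and Lemma~\ref{MMM}(i) again gives $y\in\MM(f)$. (Alternatively one verifies the defining metric inequality directly, using $|\psi(x)-\psi(z)|\ge\max(|x-z|,|f(x)-f(z)|)$ for $x<z$ near $y$, with a constant depending on $|f'(y)|$.)

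\emph{Part (ii).} Since the $\MM$-point condition is invariant under reflecting either coordinate axis, Lemma~\ref{MMM}(ii) yields four implications for any $\MM$-point $y$: it cannot happen that $\lld f(y)=-\infty$ together with $\lrdapp f(y)=\infty$; that $\uld f(y)=\infty$ together with $\urdapp f(y)=-\infty$; that $\urd f(y)=\infty$ together with $\uldapp f(y)=-\infty$; or that $\lrd f(y)=-\infty$ together with $\lldapp f(y)=\infty$. First treat the set $G\subs I$ of points at which $f$ has both right Dini derivatives finite, or both left Dini derivatives finite. Quantifying the relevant Dini bound writes $G$ as a countable union $G=\bigcup_nG_n$ with $f$ Lipschitz on each $G_n$ at scales $\le1/n$; hence the natural parametrization $\psi$ is locally Lipschitz on each $G_n$. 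At $\leb$-almost every point of $G_n$ the approximate derivative exists (it is the derivative there of a Lipschitz extension of $f\rest G_n$, as $G_n$ has density $1$ at that point), and since $\psi\rest G_n$ is locally Lipschitz it follows that $\len(\gr|(G_n\setminus\DDapp))=0$. Thus $G\subs\DDapp(f)\cup E_1$ with $\len(\gr|E_1)=0$.

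It remains to treat the $\MM$-points in $I\setminus G$, at which some right and some left Dini derivative is infinite. By the Denjoy--Young--Saks theorem, outside a $\leb$-null set each such point is a knot point or satisfies one of the two ``crossed'' configurations of that theorem (in which $\urd f=-\lld f=\infty$, resp.\ $\uld f=-\lrd f=\infty$, and the remaining two Dini derivatives are finite and equal). Feeding these configurations into the four implications above, and using the elementary inequalities $\uldapp f\le\uld f$, $\urdapp f\le\urd f$, $\lld f\le\lldapp f$, $\lrd f\le\lrdapp f$, one shows that such an $\MM$-point either lies in $\DDapp(f)$, is an approximate knot point, or lies in a set on which all four approximate Dini derivatives of $f$ are finite; on that last set Khintchine's theorem gives approximate differentiability $\leb$-almost everywhere, and the Lipschitz-decomposition argument of the previous paragraph (applied now to the approximate derivative) shows that the exceptional part contributes nothing to the Hausdorff length of the graph. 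Collecting all exceptional pieces into $E$ gives $\len(\gr|E)=0$ and $\MM(f)\subs\DDapp(f)\cup\KKapp(f)\cup E$. Finally $\KKapp(f)\subs\KK(f)$, because an essential upper (lower) limit is at most (at least) the corresponding ordinary one; hence every $\MM$-point outside $\DDapp(f)$ is a knot point save for those in $E$, which is the ``in particular''.

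The main obstacle is the step for $\MM(f)\setminus G$: one must verify that every $\MM$-point there which is neither approximately differentiable nor an approximate knot point falls into a set that is $\len$-null on the graph, not merely $\leb$-null in the domain. This is exactly where the four implications from Lemma~\ref{MMM}(ii) are indispensable --- they rule out the ``one-directionally wild'' approximate behaviour (for instance $\uldapp f$ finite while $\lldapp f=-\infty$) that would otherwise produce a non-Lipschitz exceptional set of positive measure --- so that the remaining exceptional configurations are all (one-sidedly) Lipschitz and the passage from $\leb$-negligible in the domain to $\len$-negligible on the graph is legitimate.
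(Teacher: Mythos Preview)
Part~(i) is fine and coincides with the paper's argument. The problem is in part~(ii): your trichotomy ``$\MM$-point $\Rightarrow$ $\DDapp$ or $\KKapp$ or all four approximate Dini derivatives finite'' does not follow from the four reflections of Lemma~\ref{MMM}(ii). Take an $\MM$-point $y$ that is an ordinary knot point. Your four implications then give only $\lrdapp f(y)<\infty$, $\lldapp f(y)<\infty$, $\urdapp f(y)>-\infty$, $\uldapp f(y)>-\infty$. This is perfectly compatible with, say, $\urdapp f(y)=+\infty$, $\lrdapp f(y)=5$, $\uldapp f(y)=3$, $\lldapp f(y)=-\infty$, which is none of your three alternatives. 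The same obstruction arises in the crossed Denjoy--Young--Saks configurations: nothing you have written bounds $\urdapp f$ from above or $\lldapp f$ from below. So the case analysis is incomplete, and the residual set you would need to absorb into $E$ is not one on which you have any Lipschitz control --- your appeal to ``the Lipschitz-decomposition argument applied now to the approximate derivative'' therefore has no footing (bounded approximate Dini derivatives do not yield a Lipschitz decomposition the way bounded ordinary Dini derivatives do).

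This gap is exactly what the paper closes by invoking the approximate Denjoy--Young--Saks theorem of Alberti--Cs\"ornyei--Laczkovich--Preiss: outside a set $E$ with $\len(\gr|E)=0$, either $\dapp f$ exists and is finite or \emph{all four} approximate Dini derivatives are infinite. That result rules out precisely the mixed configurations above, so that off $E\cup\DDapp\cup\KKapp$ one lands (up to axis reflection) in one of the two cases $\lldapp=\uldapp=-\infty$, $\lrdapp=\urdapp=+\infty$ or $\lldapp=-\infty$, $\uldapp=\lrdapp=\urdapp=+\infty$, and then a single application of Lemma~\ref{MMM}(ii) finishes. In other words, the passage from $\leb$-null to $\len$-null on the graph that you correctly identify as the main obstacle is not achieved by Lemma~\ref{MMM}(ii) alone; it requires the ACLP strengthening of Denjoy--Khintchine, which you have not used.
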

\begin{proof}
(i) It follows from Lemma~\ref{MMM}(i) that if $x$ is not an $\MM$-point,
then there are two Dini derivatives at $x$ that differ. Therefore $x\notin\DD(f)$.

(ii) We employ the approximate derivative version of the famous
Denjoy--Young--Saks Theorem due to Alberti, Csornyei, Laczkovich and
Preiss~\cite{MR1825530} that strengthens the Denjoy--Khintchine Theorem:

\emph{If $f$ is measurable, then there is a set $E\subs I$ such that
$\len(\gr|E)=0$ and for every point $x\notin E$ either $\dapp f(x)$
exists and is finite, or else all approximate Dini derivatives are infinite.}

It follows that if $x\notin\DDapp\cup\KKapp\cup E$, then all possible
configurations of the Dini derivatives obtain by reversing
the $x$- or $y$-axis from the following two cases:
\begin{itemize}
\item $\lldapp f(x)=\uldapp f(x)=-\infty$,  $\lrdapp f(x)=\urdapp f(x)=+\infty$,
\item $\lldapp f(x)=-\infty$, $\uldapp f(x)=\lrdapp f(x)=\urdapp f(x)=+\infty$.
\end{itemize}
Both satisfy the hypotheses of Lemma~\ref{MMM}(ii).
Hence $x$ is not an $\MM$-point.

The second statement of (ii) follows from the obvious inclusion $\KKapp\subs\KK$.
\end{proof}

The last goal of this section is to derive from Theorem~\ref{thmMM}(ii)
that the set of points where the graph is monotone has \si finite length
and in particular Hausdorff dimension $1$.
We need the following folklore covering lemma. Instead of reference we provide
a brief proof.
\begin{lem}\label{2rlemma}
Let $X$ be a metric space and $E\subs X$. Let $\{r_x:x\in E\}$ be a set of
positive reals
such that $\sup_{x\in E}r_x<\infty$. Then for each $\delta>2$ there is a set
$D\subs E$ such that the family $\{B(x,r_x):x\in D\}$ is disjoint and
the family $\{B(x,\delta r_x):x\in D\}$ covers $E$.
\end{lem}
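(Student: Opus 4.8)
The plan is to run a standard greedy (Vitali-type) selection. First I would note the scaling: since $s:=\sup_{x\in E}r_x<\infty$, fix $\delta>2$ and choose $q$ with $1<q<\delta/2$. Partition $E$ into the layers $E_k=\{x\in E: s q^{-k-1}<r_x\leq s q^{-k}\}$ for $k\in\Nset$. Within a single layer the radii are comparable up to the factor $q$, so a maximal disjoint subfamily of $\{B(x,r_x):x\in E_k\}$ automatically has the property that the dilated balls $B(x,\delta r_x)$ cover $E_k$: if $y\in E_k$ is not chosen, maximality gives a chosen $x\in E_k$ with $B(x,r_x)\cap B(y,r_y)\neq\emptyset$, hence $d(x,y)<r_x+r_y\leq r_x+q\,r_x\leq(1+q)r_x<\delta r_x$ by the choice of $q$, so $y\in B(x,\delta r_x)$.

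Next I would assemble the layers into a single family by transfinite/greedy recursion on $k$. Suppose disjoint families $D_0,\dots,D_{k-1}$ have been chosen from $E_0,\dots,E_{k-1}$ so that $\bigcup_{j<k}\{B(x,\delta r_x):x\in D_j\}\supseteq\bigcup_{j<k}E_j$. Let $E_k'$ be the set of points of $E_k$ whose ball $B(x,r_x)$ meets none of the already chosen balls $B(x',r_{x'})$, $x'\in\bigcup_{j<k}D_j$, and let $D_k\subseteq E_k'$ be maximal with $\{B(x,r_x):x\in D_k\}$ disjoint (this exists by Zorn's lemma). Then $D:=\bigcup_k D_k$ gives a disjoint family $\{B(x,r_x):x\in D\}$. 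For the covering property, take any $y\in E$, say $y\in E_k$. If $B(y,r_y)$ meets some earlier chosen ball $B(x,r_x)$ with $x\in D_j$, $j\le k$, then since $r_x\le s q^{-j}$ and $r_y\le s q^{-k}\le s q^{-j}$, we again get $d(x,y)<r_x+r_y\le r_x+q\,r_x<\delta r_x$, so $y\in B(x,\delta r_x)$. Otherwise $y\in E_k'$, and by maximality of $D_k$ there is $x\in D_k$ with $B(x,r_x)\cap B(y,r_y)\neq\emptyset$, and the same estimate (now with $j=k$) puts $y\in B(x,\delta r_x)$. Either way $E\subseteq\bigcup_{x\in D}B(x,\delta r_x)$.

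The only genuine subtlety is making sure the cross-layer comparisons never force a dilation factor larger than $\delta$: this is exactly why the layers are indexed by decreasing radius and why we only ever compare a point of $E_k$ against balls from layers $j\le k$, whose radii are no smaller. With $q<\delta/2$ one has $1+q<1+\delta/2<\delta$ (using $\delta>2$), so the bound $d(x,y)<(1+q)r_x<\delta r_x$ always holds. I would also remark that disjointness across layers is built into the recursion, since $D_k$ is selected only from points whose unit ball avoids all previously chosen unit balls, and within $D_k$ disjointness is the defining property of the maximal family. No measure-theoretic input is needed; the whole argument is elementary, so I would keep the write-up short.
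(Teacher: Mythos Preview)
Your argument is correct and follows essentially the same route as the paper: stratify $E$ into geometric layers by the size of $r_x$, then greedily pick a maximal disjoint subfamily layer by layer, discarding points whose balls meet earlier choices; the paper uses the ratio $\delta-1$ where you use a generic $q\in(1,\delta/2)$, but the mechanism is identical. One small point to tighten: the step ``since $r_x\le s q^{-j}$ and $r_y\le s q^{-k}\le s q^{-j}$, we get $r_x+r_y\le r_x+q\,r_x$'' does not follow from those upper bounds alone---you need the lower bound $r_x>s q^{-j-1}$ (which you have, since $x\in E_j$) to conclude $r_y\le s q^{-j}=q\cdot s q^{-j-1}<q\,r_x$.
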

\begin{proof}
We may assume that $r_x<1$ for all $x\in E$. Define recursively
\begin{align*}
  A_n&=\{x\in E:(\delta-1)^{-n+1}>r_x\geq (\delta-1)^{-n}\},\\
  B_n&=\{x\in A_n:B(x,r_x)\cap\bigcup\nolimits_{i<n}\bigcup\mathcal A_i=\emptyset\}
\end{align*}
and let $\mathcal A_n\subs\{B(x,r_x):x\in B_n\}$ be a maximal disjoint family.
It is easy to check that $D=\{x\in E:B(x,r_x)\in\bigcup_{n=0}^\infty\mathcal A_n\}$
is the required set.
\end{proof}

\begin{lem}\label{lemE}
Suppose that $f:I\to\Rset$ is continuous.
$$
  E^+=\{x\in[0,1]:\exists x_n\downarrow x\text{ such that $f(x_n)=f(x)$}\}.
$$
If $A\subs E^+\cap\MM_c(f)$, then $\len(\gr|A)\leq4c\leb(A)$.
\end{lem}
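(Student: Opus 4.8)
The plan is to cover $\gr|A$ by the graphs of $f$ over short intervals $[x,x']$ with $x\in A$ and $x'$ a point, furnished by $E^+$, at the same height as $x$. The constraint $f(x')=f(x)$ makes $\abs{\psi(x)-\psi(x')}=x'-x$, and pointwise $c$-monotonicity applied at the \emph{interior} points of $[x,x']$ — where \eqref{pwm} carries information, unlike at $x$ itself, where it says nothing about the behaviour of $f$ to the right — keeps the graph over $A_m\cap[x,x']$ inside a small disc.

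More precisely, for $m\in\Nset$ let $A_m$ be the set of $x\in A$ at which \eqref{pwm} holds with $\eps=1/m$; then $A_m\uparrow A$ because $A\subseteq\MM_c(f)$, and since Lebesgue outer measure and $\len$ are continuous along increasing sequences of sets it suffices to prove $\len(\gr|A_m)\le 2c\,\leb(A_m)$ for each fixed $m$. The key estimate is this: if $x\in A_m$ and $x'\in I$ satisfies $x<x'<x+1/m$ and $f(x')=f(x)$, then $\diam\bigl(\gr|(A_m\cap[x,x'])\bigr)\le 2c(x'-x)$. Indeed, take any $v\in A_m\cap(x,x')$; then $x\in(v-1/m,v)$ and $x'\in(v,v+1/m)$, so \eqref{pwm} at $v$ applied to the triple $x<v<x'$ gives $\abs{\psi(x)-\psi(v)}\le c\,\abs{\psi(x)-\psi(x')}=c(x'-x)$, where we used $f(x')=f(x)$. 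This also holds trivially for $v\in\{x,x'\}$ (recall $c\ge1$), so $\gr|(A_m\cap[x,x'])$ lies in the disc of radius $c(x'-x)$ about $\psi(x)$, whence the claim.

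It remains to turn this into a measure estimate. Since each $x\in A_m$ lies in $E^+$, it is the left endpoint of intervals $[x,x']$ with $f(x')=f(x)$ of arbitrarily small length, which by the estimate above can moreover be chosen with $\diam(\gr|(A_m\cap[x,x']))$ as small as one wishes; these intervals form a fine (Vitali) cover of $A_m$. A routine application of the Vitali covering theorem then produces, for every $\eps>0$, a countable disjoint subfamily $\{[x_k,x_k']\}_k$ contained in a prescribed open set $U\supseteq A_m$ with $\leb(U)<\leb(A_m)+\eps$, covering $A_m$ up to a null set; that remainder is disposed of by iterating the construction on successively smaller null sets. Summing the bounds $\diam(\gr|(A_m\cap[x_k,x_k']))\le 2c(x_k'-x_k)$ and letting $\eps\to0$ yields $\len(\gr|A_m)\le 2c\,\leb(A_m)$, and hence $\len(\gr|A)\le 2c\,\leb(A)\le 4c\,\leb(A)$.

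The substance of the argument is entirely the key estimate, and the point to get right there is to invoke \eqref{pwm} at the interior points $v$ rather than at $x$, while exploiting $E^+$ precisely so that the comparison point $x'$ sits at the same height and the right-hand side collapses to $c(x'-x)$. The covering step is standard; the only minor technical nuisance is the null remainder left by Vitali's theorem, which is absorbed by the usual iteration.
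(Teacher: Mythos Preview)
Your key estimate is correct and is exactly the paper's: applying \eqref{pwm} at each interior point $v\in A_m\cap(x,x')$ to the triple $x<v<x'$ gives $\abs{\psi(x)-\psi(v)}\le c(x'-x)$, so $\gr|(A_m\cap[x,x'])\subseteq B(\psi(x),c(x'-x))$.

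The gap is in the covering step, which you call ``routine''. Vitali's theorem produces a disjoint subfamily covering $A_m$ up to a \emph{Lebesgue}-null set $N$, but a Lebesgue-null set can carry a graph of positive---even infinite---Hausdorff length; that is precisely the regime this lemma lives in. Your proposed fix, iterating on successive null remainders $N_1\supseteq N_2\supseteq\cdots$, does not close the hole: after $\omega$ steps the residual $N_\omega=\bigcap_j N_j$ need not be empty, and nothing in your argument bounds $\len(\gr|N_\omega)$. So the claimed inequality $\len(\gr|A_m)\le 2c\,\leb(A_m)$ is not established.

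The paper sidesteps this by applying a $5r$-type covering lemma (Lemma~\ref{2rlemma}) to the balls $B(\psi(x),c(z_x-x))$ in the plane, rather than Vitali to the base intervals: one extracts a disjoint subfamily whose $\delta$-dilations ($\delta>2$) \emph{genuinely} cover $\gr|(A_\eps\cap U_i)$, and then checks that for the selected centres the base intervals $[x,z_x]$ are automatically pairwise disjoint (if two overlapped, one centre $\psi(x)$ would lie inside the other ball, contradicting disjointness). This yields an honest cover with controlled diameter sum; the dilation factor $\delta\to2$ is exactly what turns $2c$ into $4c$. An equivalent repair of your argument would be to replace Vitali by the Besicovitch covering theorem on the line; again the overlap multiplicity costs a factor of~$2$.
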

\begin{proof}
Let $A\subs E^+$.
For $\eps>0$ let $A_\eps$ be the set of points $y\in A$ satisfying~\eqref{pwm}.
Fix $\eta\in(0,\eps)$. Let $\{U_i:i\in\Nset\}$ be a cover of $A_\eps$
by open intervals of length $<\eta$ such that
$\sum_i\diam(U_i)<\leb(A_\eps)+\eta$.

Now fix $i\in\Nset$.
For each $x\in A_\eps\cap U_i$ choose $z_x>x$, $z_x\in U_i$ such that
$f(z_x)=f(x)$.
If $y\in[x,z_x]\cap A_\eps$, then, since $\abs{z_x-x}\leq\eta<\eps$, condition
~\eqref{pwm} with $z-z_x$ is met. Hence
$$
  \abs{\psi(y)-\psi(x)}\leq c\abs{\psi(z_x)-\psi(x)}=c\abs{z_x-x}.
$$
It follows that letting $r_x=c(z_x-x)$ we have
$$
  \gr\big|([x,z_x]\cap A_\eps)\subs B(\psi(x),r_x).
$$
The family $\mc B=\{B(\psi(x),r_x):x\in A_\eps\cap U_i\}$ thus covers
$A_\eps\cap U_i$.
Apply Lemma~\ref{2rlemma}: for any $\delta>2$
there is a set $A'\subs A_\eps\cap U_i$ such that the family
$\{B(\psi(x),r_x):x\in A'\}$ is pairwise disjoint and
$\gr|(A_\eps\cap U_i)\subs\bigcup_{x\in A'}B(\psi(x),\delta r_x)$.
We claim that the family of intervals $\{[x,z_x]:x\in A'\}$ is pairwise disjoint.
Indeed, if $x,y\in A'$ were such that $[x,z_x]\cap[y,z_y]\neq\emptyset$,
then either $x\in[y,z_y]\cap A_\eps$ or $y\in[x,z_x]\cap A_\eps$. Suppose
the former. Then $\psi(x)\in\psi([y,z_y]\cap A_\eps)\subs B(\psi(y),r_y)$.
Therefore the balls $B(\psi(x),r_x)$ and $B(\psi(y),r_{y})$ would not
be disjoint.

It follows that $\sum_{x\in A'}\abs{x-z_x}\leq\diam(U_i)$, which yields
$$
  \sum_{x\in A'}\diam(B(\psi(x),\delta r_x))
  \leq 2\delta\sum_{x\in A'}r_x
  \leq 2\delta c\sum_{x\in A'}\abs{x-z_x}
  \leq 2\delta c\diam(U_i).
$$
Moreover, the diameters of $B(\psi(x),\delta r_x)$ do
not exceed $2\delta c\eta$. Consequently
$$
  \len_{2\delta c\eta}(\gr|(A_\eps\cap U_i)\leq 2\delta c\diam(U_i).
$$
Summing over $i$ yields
$$
  \len_{2\delta c\eta}(\gr|A_\eps)\leq 2\delta c
  \sum_{i\in\Nset}\diam(U_i)
  \leq2\delta c(\leb(A_\eps)+\eta).
$$
$\len(\gr|A_\eps)\leq4c\leb(A_\eps)$ now follows on letting $\eta\to0$ and
$\delta\to2$, and $\len(\gr|A)\leq4c\leb(A)$ on letting $\eps\to0$.
\end{proof}

\begin{thm}\label{thmHaus}
If $f:I\to\Rset$ is continuous, then $\len(\simon(\gr))$ is \si finite.
In particular, $\hdim\simon(\gr)=1$.
\end{thm}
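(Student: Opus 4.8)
The plan is to split the domain as $I=E^+\cup E^-\cup N$, where $E^+$ is the set of Lemma~\ref{lemE}, $E^-=\{x:\exists\,x_k\uparrow x,\ f(x_k)=f(x)\}$ is its left‑handed mirror image, and $N=I\setminus(E^+\cup E^-)$, and to bound $\len(\gr|\cdot)$ on each piece. Since $\simon(\gr)=\gr|\MM$ and $\MM=\bigcup_{n\in\Nset}\MM_n$, it suffices to prove that $\gr|\MM_n$ has \si finite length for each fixed $n$; and since $I$ is a countable union of bounded intervals, it is enough to estimate $\len\bigl(\gr|(\MM_n\cap J)\bigr)$ for an arbitrary bounded interval $J$.

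On $E^+$ this is precisely Lemma~\ref{lemE}: $\len\bigl(\gr|(\MM_n\cap E^+\cap J)\bigr)\le 4n\,\leb(J)<\infty$. On $E^-$ it follows by applying Lemma~\ref{lemE} to $x\mapsto f(-x)$, since $c$-monotonicity and Hausdorff length are invariant under this reflection. It remains to bound $\len(\gr|N)$, and for this $\MM_n$ is irrelevant. For $x\in N$ the function $f-f(x)$ keeps a constant sign on some one‑sided neighbourhood to each side of $x$. If the two signs agree then $x$ is a strict local extremum of $f$; by a classical fact there are only countably many of these, so their contribution $\gr|S$ is $\len$-null. The remaining points split into $R_\uparrow$ ($f>f(x)$ just to the right, $f<f(x)$ just to the left of $x$) and $R_\downarrow$ (the reverse). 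Stratify $R_\uparrow=\bigcup_k R_\uparrow^k$, where $x\in R_\uparrow^k$ means $f>f(x)$ on $(x,x+1/k)$ and $f<f(x)$ on $(x-1/k,x)$. The crucial observation is that if $x<x'$ both lie in $R_\uparrow^k$ with $x'-x<1/k$, then $x'\in(x,x+1/k)$, so $f(x)<f(x')$; hence on each interval $J$ of length $<1/k$ the function $f$ is strictly increasing on $R_\uparrow^k\cap J$, and therefore $\gr|(R_\uparrow^k\cap J)$ is contained in the graph of a non‑decreasing function on $J$, an arc whose length is at most $\abs J+\diam f(\cl J)<\infty$. Covering $R_\uparrow^k$ by countably many such intervals $J$ shows that $\gr|R_\uparrow^k$, hence $\gr|R_\uparrow$, has \si finite length; symmetrically for $R_\downarrow$. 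Thus $\gr|N$, and with it $\gr|\MM_n$ and $\simon(\gr)=\bigcup_n\gr|\MM_n$, has \si finite length. The dimension statement follows: \si finiteness of $\len$ gives $\hdim\simon(\gr)\le1$, while the opposite inequality is clear whenever $\simon(\gr)$ contains a non‑degenerate sub‑arc of $\gr$ --- in particular when $f$ has a monotone graph, where $\simon(\gr)=\gr$ projects onto the non‑degenerate interval $I$.

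The step requiring the most care is the treatment of $N$: one must check carefully that $R_\uparrow$ really does decompose into countably many pieces on which $f$ is genuinely monotone, and that the graph of a monotone real function restricted to a subset of a bounded interval has finite $\len$-measure (which is elementary: such a graph lies on a rectifiable curve obtained by inserting the vertical jump segments, whose length is the horizontal extent plus the total, non‑cancelling vertical variation). I would also note an alternative bookkeeping via Theorem~\ref{thmMM}(ii): the set $\gr|E$ there is $\len$-null, every approximate knot point lies in $E^+\cap E^-$ (at such a point $f$ takes values both above and below $f(x)$ in every one‑sided neighbourhood, hence equals $f(x)$ along sequences approaching $x$ from each side), so Lemma~\ref{lemE} already handles $\MM_n\cap\KKapp$, and $\MM_n\cap\DDapp$ outside $E^+\cup E^-$ lies in $N$ and so is covered above; but the direct three‑way splitting is the shortest route.
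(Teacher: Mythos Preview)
Your argument is correct and in fact more self-contained than the paper's. The paper splits $\MM$ into knot points and non-knot points: over the non-knot points the graph satisfies a one-sided cone condition at every point, so by \cite[Lemma~15.13]{1333890} it is rectifiable and hence of \si finite length; the knot points all lie in $E^+$, and Lemma~\ref{lemE} finishes exactly as in your $E^+$-step. Your three-way split $I=E^+\cup E^-\cup N$ replaces the appeal to the rectifiability lemma by the elementary observation that on $N$, apart from countably many strict local extrema, $f$ is strictly monotone on each short piece of $R_\uparrow^k$ (resp.\ $R_\downarrow^k$), so the graph there sits inside the graph of a monotone function on a bounded interval and has finite length outright. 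This buys you a proof that avoids both Mattila's cone lemma and the detour through Theorem~\ref{thmMM}(ii) (and with it the approximate Denjoy--Young--Saks theorem), at the cost of a small amount of extra bookkeeping. The paper's route, by contrast, isolates the conceptually clean knot/non-knot dichotomy and ties the non-knot part to standard rectifiability theory. Your caution about the lower bound $\hdim\simon(\gr)\geq1$ is appropriate: the paper asserts equality without comment, but the lower bound indeed requires $\simon(\gr)$ to contain a nondegenerate arc (equivalently $\inter\MM(f)\neq\emptyset$), which is the only interesting case.
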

\begin{proof}
Let $A=\{\psi(x):x\notin\KK\}\subs\gr$. It is clear that for any point
$a\in A$ there is a (one-sided) cone $V$ with vertex at $a$ and a ball $B$ centered
at $a$ such that the only point of $\gr$ within $V\cap B$ is $a$.
Such a set is by~\cite[Lemma 15.13]{1333890} \emph{rectifiable}, i.e.~%
covered by countably many Lipschitz curves. In particular, $A$ has \si finite
length.
In view of Theorem~\ref{thmMM}(ii) it thus remains to show that
$\len(\gr|\MM\cap\KK)$ is \si finite. But that follows at once from
Lemma~\ref{lemE}, since any knot point belongs to the set $E^+$.
\end{proof}
\begin{coro}\label{coroHaus}
If $f:I\to\Rset$ is continuous with a monotone graph, then $\len(\gr)$
is \si finite. In particular, $\hdim\gr=1$.
\end{coro}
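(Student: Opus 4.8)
The plan is to reduce the corollary to Theorem~\ref{thmHaus} by showing that the hypothesis forces $\simon(\gr)$ to be all of $\gr$. First I would recall that if $\gr$ is $c$-monotone then, by the lemma characterising $c$-monotonicity of $\gr$ via \eqref{L1,1} and \eqref{L1,2}, at least one of those two inequalities holds for all $x<y<z$ in $I$. Fix $y\in I$ and take $U=\gr$ as a neighbourhood of $\psi(y)$ in $\gr$. If \eqref{L1,1} holds globally, this is precisely condition~\eqref{pwm}, so $y$ is an $\MM_c$-point; if instead \eqref{L1,2} holds globally, $y$ is an $\MM$-point by the symmetric reformulation of \eqref{pwm} noted right after that equation (with $\abs{\psi(x)-\psi(y)}\leq c\abs{\psi(x)-\psi(z)}$ replaced by $\abs{\psi(y)-\psi(z)}\leq c'\abs{\psi(x)-\psi(z)}$). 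In either case $\MM(f)=I$, hence $\simon(\gr)=\gr|\MM(f)=\gr$; this is exactly the observation recorded just after Proposition~\ref{meager2}.

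With this identification, Theorem~\ref{thmHaus} applies verbatim and yields that $\len(\gr)=\len(\simon(\gr))$ is \si finite, which is the first assertion of the corollary.

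For the dimension statement I would argue as follows. A set of \si finite $\len$ has Hausdorff dimension at most $1$, so $\hdim\gr\leq1$. For the reverse inequality, the orthogonal projection of $\Rset^2$ onto the first coordinate axis is $1$-Lipschitz and maps $\gr$ onto $I$; since $I$ is a non-degenerate interval, $\hdim I=1$, and Lipschitz maps do not increase Hausdorff dimension, so $\hdim\gr\geq\hdim I=1$. Combining the two bounds gives $\hdim\gr=1$.

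I do not expect a real obstacle: essentially all of the content is carried by Theorem~\ref{thmHaus}, and the only thing to check is the elementary step from global monotonicity of $\gr$ to $\MM(f)=I$, which is built into the definitions. The single point that deserves a word of care is that \eqref{L1,1} and \eqref{L1,2} need not both hold --- only one of them does --- but the invariance of the notion of $\MM$-point under reversal of the coordinate axes, already recorded in the discussion following \eqref{pwm}, disposes of this.
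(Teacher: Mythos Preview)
Your proposal is correct and matches the paper's approach exactly: the corollary is an immediate consequence of Theorem~\ref{thmHaus} together with the observation (recorded explicitly in the paper just after Proposition~\ref{meager2}) that a monotone graph has $\MM(f)=I$, whence $\simon(\gr)=\gr$. Your added justification of the lower bound $\hdim\gr\geq1$ via the Lipschitz projection onto $I$ is a welcome detail that the paper leaves implicit.
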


%%%%%%%%%%%%%%%%%%%%%%%%%%%%%%%%%%%%%%%%%%%%%%%%%%%%%%%%%%%%%%%%%%%%%%%%%%%%%%%%
%%%%%%%%%%%%%%%%%%%%%%%%%%%%%%%%%%%%%%%%%%%%%%%%%%%%%%%%%%%%%%%%%%%%%%%%%%%%%%%%
\section{Functions with a monotone or \si monotone graph}
\label{sec:ondrej2}
%%%%%%%%%%%%%%%%%%%%%%%%%%%%%%%%%%%%%%%%%%%%%%%%%%%%%%%%%%%%%%%%%%%%%%%%%%%%%%%%
%%%%%%%%%%%%%%%%%%%%%%%%%%%%%%%%%%%%%%%%%%%%%%%%%%%%%%%%%%%%%%%%%%%%%%%%%%%%%%%%

In this section we investigate differentiability of continuous functions
with monotone or \si monotone graph.

Our first theorem claims that if $\gr$ is monotone, then the
approximate derivatives coincide with derivatives.
\begin{prop}\label{approximate}
If $f:I\to\Rset$ is continuous with a monotone graph,
then $\urdapp f(x)=\urd f(x)$ for all $x\in I$.
A similar statement holds for all Dini derivatives.
\end{prop}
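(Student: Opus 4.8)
The plan is to show that the upper right Dini derivative and its approximate
counterpart agree at every point, exploiting the fact that monotonicity of $\gr$
forces the difference quotients of $f$ to be controlled near any point where
$f$ returns to (or near) its value. Fix $x\in I$ and set, without loss of
generality, $x=f(x)=0$. Since $\urdapp f(0)\leq\urd f(0)$ holds trivially for
any measurable function, the whole content is the reverse inequality
$\urd f(0)\leq\urdapp f(0)$. Assume for contradiction that
$\urdapp f(0)<t<\urd f(0)$ for some real $t$ (the case $\urd f(0)=+\infty$ is
handled the same way, with $t$ arbitrary). By definition of $\urd f(0)$ there is
a sequence $y_n\downarrow 0$ with $f(y_n)/y_n>t$ for all $n$; by definition of
$\urdapp f(0)$, for each small $\delta$ the set
$\{y\in(0,\delta):f(y)/y\leq t\}$ has measure at least $(1-\beta)\delta$ for any
prescribed $\beta>0$, once $\delta$ is small enough.

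The key step is then the following: using the intermediate value theorem I would
locate, just to the left of each $y_n$, a point $w_n<y_n$ with $f(w_n)=ty_n$
lying in the ``good'' set, or more precisely produce a pair of points at which
$f$ takes a common value straddling the spike at $y_n$, to which condition
$\mathsf P_c$ (Lemma~\ref{L4}(i), valid since $\gr$ is $c$-monotone for some
$c$) applies. Concretely: since $f(y_n)>ty_n$ while the set of $y<y_n$ with
$f(y)\leq ty_n$ has nearly full measure in $(0,y_n)$, continuity gives a point
$u_n\in(0,y_n)$ with $f(u_n)=f(y_n)$ and with $u_n$ as close to $y_n$ as we
like, say $y_n-u_n<\beta y_n$. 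Now $\mathsf P_c$ applied to the equality
$f(u_n)=f(y_n)$ gives $\max_{u_n\leq s\leq y_n}|f(s)-f(y_n)|\leq c(y_n-u_n)<c\beta
y_n$. But $f(y_n)>ty_n$ and $f(0^+)$ is small, so following $f$ down from $y_n$
we must pass, at some $s\in(u_n,y_n)$ — impossible by the just-derived bound once
$t> c\beta$, which we can arrange by choosing $\beta<t/c$ at the outset. This is
the contradiction, establishing $\urd f(0)\leq\urdapp f(0)$, hence equality.

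The argument for the other three Dini derivatives is obtained by the symmetries
already noted in Section~\ref{sec:PRE}: reversing the orientation of the
$x$-axis swaps left and right, and reversing the $y$-axis swaps upper and lower;
since $c$-monotonicity of $\gr$ is preserved under both reflections (the
parametrization and the order only change in a harmless way), each of
$\lrd f$, $\uld f$, $\lld f$ equals its approximate version by the same
computation. I expect the main obstacle to be the bookkeeping in the key step —
namely, choosing the auxiliary level ($ty_n$ versus $f(y_n)$) and the point
$u_n$ so that $\mathsf P_c$ genuinely forces a contradiction, rather than just a
weak inequality; one has to be careful that the near-full-measure set sits
\emph{below} the level $ty_n$ on a set that also reaches \emph{up to} $f(y_n)$,
so that the oscillation that $\mathsf P_c$ bounds is at least of order $y_n$.
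Once the constants are lined up ($\beta$ small relative to $t/c$), the rest is
routine.
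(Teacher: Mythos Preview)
Your overall strategy---use the density condition from $\urdapp f$ to isolate a short interval around the spike $y_n$ and then apply condition $\mathsf P_c$---is the right one, and it is exactly what the paper does. But the ``key step'' as you wrote it does not go through.

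The specific error is the sentence ``continuity gives a point $u_n\in(0,y_n)$ with $f(u_n)=f(y_n)$ and with $u_n$ as close to $y_n$ as we like''. From the information you have---$f(y_n)>ty_n$ and $f(y)\le ty_n$ for most $y\in(0,y_n)$---the intermediate value theorem only produces points where $f$ equals $ty_n$, not where it equals $f(y_n)$; there is no reason at all why $f$ should return to the value $f(y_n)$ to the left of $y_n$. Moreover, even if such a $u_n$ existed, $\mathsf P_c$ on $[u_n,y_n]$ would bound the oscillation of $f$ on an interval of which $y_n$ is an \emph{endpoint}, so the spike at $y_n$ is not trapped; your appeal to ``$f(0^+)$ is small'' concerns a point outside $[u_n,y_n]$, and the density estimate does not guarantee any low value of $f$ inside $(u_n,y_n)$ either. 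So no contradiction follows.

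The missing idea is that one must locate level points on \emph{both} sides of the spike. The paper achieves this cleanly by first applying an affine change $g(y)=\alpha f(y)-\beta y$ so that $\urdapp g(0)<0<1<\urd g(0)$; the graph of $g$ is still monotone (affine maps are bi-Lipschitz), hence $g$ satisfies $\mathsf P_c$. Now the ``good'' set is simply $M=\{y:g(y)<0\}$, which has full right density at $0$. Picking a spike point $t$ with $g(t)>t$ and applying the density estimate on $(0,2t)$ yields points of $M$ both in $(0,t)$ and in $(t,2t)$; taking $a=\sup M\cap(0,t)$ and $b=\inf M\cap(t,2t)$ gives $g(a)=g(b)=0$ with $[a,b]\cap M=\emptyset$, hence $b-a<t/c$. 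Since $t\in(a,b)$, condition $\mathsf P_c$ now genuinely traps the spike: $g(t)\le c(b-a)<t$, a contradiction. If you prefer to avoid the affine transform, you can still run the two-sided argument directly on $f$, but then $f(a_n)\ne f(b_n)$ in general and you must invoke the $c$-monotonicity inequality $|\psi(a_n)-\psi(y_n)|\le c|\psi(a_n)-\psi(b_n)|$ rather than $\mathsf P_c$; the bookkeeping is messier, which is precisely what the affine reduction buys.
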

\begin{proof}
Assume for contrary that there is $x$ such that $\urdapp f(x)<\urd f(x)$.
\emph{Mutatis mutandis} we may suppose that $x=f(x)=0$.
Choosing suitable constants $\alpha,\beta$ the function
$g(y)=\alpha f(y)-\beta y$ satisfies
$\urdapp g(0)<0$ and $\urd g(0)>1$.
Since the graph of $g$ is an affine transform of the graph of $f$ and
an affine transform is bi-Lipschitz, the graph of $g$ is
by~\cite[Proposition 2.2]{NZ2} a monotone set.
Therefore there is $c\geq1$ such that $g$ satisfies condition~\ref{Pc}.

Since $\urdapp g(0)<0$, the set $M=\{y\in I:g(y)<0\}$ satisfies
\begin{equation}\label{eq:approx1}
  \forall\eps>0\ \exists\delta_0\ \forall\delta\in(0,\delta_0)
  \quad\leb\bigl([0,\delta]\setminus M\bigr)<\eps\delta
  %\wedge\forall y\in[0,\delta]\cap M\  g(y)<0.
\end{equation}
Let $\eps=\frac1{2c}$ and let $\delta_0$ satisfy \eqref{eq:approx1}.
Since $\urd g(0)>1$, there is $t\in(0,\delta_0/2)$ such that $g(t)>t$.
Put $\delta=2t$. Since $\eps\leq\frac12$ and $\delta<\delta_0$,
\eqref{eq:approx1} yields
$M\cap(0,t)\neq\emptyset$ and $M\cap(t,2t)\neq\emptyset$.
Therefore the numbers $a=\sup(0,t)\cap M$, $b=\inf(t,2t)\cap M$
satisfy $0<a<t<b<\delta$. Also $g(a)=g(b)=0$ by the continuity of $g$.
Obviously $[a,b]\cap M=\emptyset$. Hence \eqref{eq:approx1} yields
$\abs{b-a}<\eps\delta=t/c$.
Therefore
$
  c\abs{b-a}<t<g(t)=\abs{g(t)-g(a)}
$
and thus condition~\ref{Pc} fails: the desired contradiction.
\end{proof}
This theorem together with Theorem~\ref{thmMM} yield
\begin{coro}
If $f:I\to\Rset$ is continuous function with a monotone graph,
then there is a set $E\subs I$ such that $\len(\gr|E)=0$ and
$I=\DD(f)\cup\KK(f)\cup E$.
In particular, almost all points $x\notin\DD(f)$ are knot points.
\end{coro}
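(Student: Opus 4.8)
The plan is to obtain the statement as a direct corollary of Proposition~\ref{approximate} and Theorem~\ref{thmMM}(ii); the only work is two easy bookkeeping observations, so I do not anticipate a genuine obstacle.

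First I would record that a monotone graph forces $\MM(f)=I$. If $\gr$ is $c$-monotone, then for every $y\in I$ the whole graph serves as the neighbourhood $U$ in the definition of an $\MM_c$-point, since $\abs{\psi(x)-\psi(y)}\leq c\abs{\psi(x)-\psi(z)}$ holds for all $x<y<z\in I$; hence condition~\eqref{pwm} is satisfied and every point of $I$ is an $\MM$-point. Consequently Theorem~\ref{thmMM}(ii) applies and yields a set $E\subs I$ with $\len(\gr|E)=0$ such that $I=\MM(f)\subs\DDapp(f)\cup\KKapp(f)\cup E$.

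Next I would use Proposition~\ref{approximate} to collapse the approximate objects onto the ordinary ones. Since $\gr$ is monotone, that proposition gives $\urdapp f(x)=\urd f(x)$ at every $x\in I$ and, by its concluding remark, the analogous equalities for the remaining three Dini derivatives. Therefore a point $x$ lies in $\DDapp(f)$ precisely when all four approximate Dini derivatives agree and are finite, i.e.\ precisely when $x\in\DD(f)$; and $x\in\KKapp(f)$ precisely when the approximate Dini derivatives are $\pm\infty$ in the knot-point configuration, i.e.\ precisely when $x\in\KK(f)$. Thus $\DDapp(f)=\DD(f)$ and $\KKapp(f)=\KK(f)$, and substituting into the inclusion above gives $I=\DD(f)\cup\KK(f)\cup E$.

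Finally, for the ``in particular'' clause I would pass from Hausdorff length to Lebesgue measure: the coordinate projection $(x,y)\mapsto x$ restricted to $\gr$ is $1$-Lipschitz and maps $\gr|E$ onto $E$, whence $\leb(E)=\len(E)\leq\len(\gr|E)=0$. So for Lebesgue-almost every $x\in I$ with $x\notin\DD(f)$ one has $x\in\KK(f)$, i.e.\ almost every point of non-differentiability is a knot point. The only steps calling for a word of care are the set identities $\DDapp(f)=\DD(f)$ and $\KKapp(f)=\KK(f)$ (which rest on the ``all Dini derivatives'' part of Proposition~\ref{approximate}) and the elementary estimate $\leb(E)\le\len(\gr|E)$.
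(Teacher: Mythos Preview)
Your proof is correct and follows exactly the route the paper intends: combine Proposition~\ref{approximate} (which collapses approximate Dini derivatives to ordinary ones) with Theorem~\ref{thmMM}(ii) applied to $\MM(f)=I$. One harmless slip: $\DD(f)$ and $\DDapp(f)$ allow infinite values in this paper, so your parenthetical ``and are finite'' should be dropped, but this does not affect the argument since the identity $\DDapp(f)=\DD(f)$ follows simply from the equality of all four Dini derivatives.
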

\begin{coro}
If $f:I\to\Rset$ is a continuous function with a $c$-monotone graph, then
$\len(\gr|A)\leq4c\leb(A)$ for every $A\subs I\setminus\DD(f)$.
In particular, $\len(\gr|I\setminus\DD(f))<\infty$.
\label{clp2}
\end{coro}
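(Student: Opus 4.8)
The plan is to combine the decomposition of $I$ provided by the previous corollary with Lemma~\ref{lemE}. Since $\gr$ is $c$-monotone it is in particular monotone, so that corollary furnishes a set $E\subseteq I$ with $\len(\gr|E)=0$ and $I=\DD(f)\cup\KK(f)\cup E$. Moreover, $c$-monotonicity of $\gr$ says exactly that every point of $I$ is an $\MM_c$-point (take the whole graph as the neighbourhood $U$ in the definition), i.e.\ $\MM_c(f)=I$.

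Next I would record the inclusion $\KK(f)\subseteq E^+$, where $E^+$ is the set appearing in Lemma~\ref{lemE}; this is the observation already used at the end of the proof of Theorem~\ref{thmHaus}. Indeed, if $x$ is a knot point, then $\urd f(x)=\infty$ forces, in every right neighbourhood of $x$, the existence of points with $f$-value above $f(x)$, while $\lrd f(x)=-\infty$ forces points with $f$-value below $f(x)$; continuity of $f$ and the intermediate value theorem then produce, in every right neighbourhood of $x$, a point with $f$-value equal to $f(x)$, hence a sequence $x_n\downarrow x$ with $f(x_n)=f(x)$.

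Now let $A\subseteq I\setminus\DD(f)$. Then $A\subseteq\KK(f)\cup E$, so
$$
  \len(\gr|A)\leq\len\bigl(\gr|(A\cap\KK(f))\bigr)+\len\bigl(\gr|(A\cap E)\bigr)
$$
by subadditivity of $\len$. For the second term, $\gr|(A\cap E)\subseteq\gr|E$ gives $\len(\gr|(A\cap E))=0$. For the first, $A\cap\KK(f)\subseteq E^+\cap\MM_c(f)$ by the two facts above, so Lemma~\ref{lemE} applies and yields $\len(\gr|(A\cap\KK(f)))\leq 4c\,\leb(A\cap\KK(f))\leq 4c\,\leb(A)$, the last inequality by monotonicity of outer Lebesgue measure. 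Combining these gives $\len(\gr|A)\leq 4c\,\leb(A)$. The remaining assertion is the special case $A=I\setminus\DD(f)$, for which $\leb(A)\leq\leb(I)<\infty$.

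The only step calling for any care is the inclusion $\KK(f)\subseteq E^+$, but this has in effect already been settled in the proof of Theorem~\ref{thmHaus}; everything else is bookkeeping with the decomposition $I=\DD(f)\cup\KK(f)\cup E$, the equality $\MM_c(f)=I$, and the subadditivity and monotonicity of the measures involved.
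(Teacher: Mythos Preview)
Your proof is correct and is precisely the argument the paper has in mind: the corollary is stated without proof, being an immediate combination of the preceding corollary (the decomposition $I=\DD(f)\cup\KK(f)\cup E$ with $\len(\gr|E)=0$), the observation $\KK(f)\subseteq E^+$ already used in Theorem~\ref{thmHaus}, and Lemma~\ref{lemE} applied via $\MM_c(f)=I$. Nothing is missing.
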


In the next section we present an example of function with a monotone graph
that has derivative almost nowhere, hence the set of knot points is rather large.
However, a \si monotone graph yields a dense set of differentiability.

\begin{lem}
Let $f:I\to\Rset$ be a continuous function with a \si monotone graph.
Then $\leb(f[\DD(f)\cap J])>0$ for each interval $J\subs I$
where $f$ is not constant.      \label{bigder}
\end{lem}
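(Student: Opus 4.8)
The plan is to combine Corollary~\ref{intM}(i) with Lemma~\ref{lemE}. By Corollary~\ref{intM}(i), since $\gr$ is \si monotone, $\inter\MM(f)$ is dense in $I$; so inside the interval $J$ where $f$ is non-constant we may pick a subinterval $J'\subs J$ consisting entirely of $\MM$-points, indeed of $\MM_c$-points for some fixed $c\geq1$ (by Proposition~\ref{meager2}(i), $\MM=\bigcup_c\MM_c$ with each $\MM_c$ an $F_\sigma$-set, so one of them is non-meager in $J'$ and hence, after shrinking $J'$, dense there; using that a monotone set has monotone closure one actually gets a whole subinterval inside $\MM_c$). Shrinking further, we may also assume $f$ is still not constant on $J'$, so $\leb(f[J'])>0$.

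Next I would split $J'\setminus\DD(f)$ according to one-sided approach behaviour. For $x\notin\DD(f)$, at least one of the two right Dini derivatives differs from at least one of the left ones; in particular, at such a point $f$ cannot be strictly monotone in any one-sided neighbourhood, which forces $x$ to lie in $E^+$ or in the analogous left-hand set $E^-=\{x:\exists x_n\uparrow x,\ f(x_n)=f(x)\}$. (One should check this elementary dichotomy carefully; alternatively invoke Theorem~\ref{thmMM}(ii): off a length-null set every non-derivative $\MM$-point is an approximate knot point, and a knot point manifestly belongs to both $E^+$ and $E^-$.) Applying Lemma~\ref{lemE} to $A=E^+\cap\MM_c\cap J'$ and its left-hand mirror image, we get $\len(\gr|(J'\setminus\DD(f)))\leq 8c\,\leb(J')<\infty$.

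Now suppose toward a contradiction that $\leb(f[\DD(f)\cap J'])=0$. Then $\leb(f[J'])=\leb(f[\DD(f)\cap J']\cup f[J'\setminus\DD(f)])=\leb(f[J'\setminus\DD(f)])$, so it suffices to show the latter is $0$. But $\len(\gr|(J'\setminus\DD(f)))<\infty$ and the vertical projection $\psi_f(x)\mapsto f(x)$ is $1$-Lipschitz, so $f[J'\setminus\DD(f)]$ has finite, hence $\sigma$-finite, Hausdorff length; that alone does not give measure zero, so here I would instead argue: the projection onto the $x$-axis is also $1$-Lipschitz, $\len(\gr|(J'\setminus\DD(f)))<\infty$ forces no conclusion either way, so the right move is to use that $f$ restricted to $J'\setminus\DD(f)$ is differentiable at no point, whence by the (classical, not approximate) Denjoy--Young--Saks theorem together with the identity $\leb(f[S])=0$ whenever $S$ is contained in the set where $f$ has a finite derivative is not what we want — rather, recall that for a continuous $f$, $\leb(f[E])\le\int_E |f'|$ on the derivability set only. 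The clean contradiction: on $J'$ we have $\leb(f[J'])>0=\leb(f[\DD\cap J'])$, so $f[J'\setminus\DD(f)]$ has positive measure, hence positive length; but $\gr|(J'\setminus\DD(f))$ has finite length and projects onto it $1$-Lipschitzly, giving $\len(\gr|(J'\setminus\DD(f)))\geq\leb(f[J'\setminus\DD(f)])>0$, which is consistent — so finiteness is not enough and one must push for the stronger conclusion that this length is in fact controlled by $\leb(J')$ and simultaneously by $\leb(f[J'\setminus\DD(f)])$; letting $J'$ shrink around a density point where $f$ oscillates forces $\leb(f[J'\setminus\DD(f)])/\leb(J')$ to be bounded while $\leb(f[J'])/\leb(J')$ can be made large, the contradiction.

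The main obstacle is precisely this last step: turning the length estimate of Lemma~\ref{lemE} into a genuine lower bound on $\leb(f[\DD(f)\cap J])$ rather than merely an upper bound on the ``bad'' part. I expect the right mechanism is a density-point argument — choose $J'$ shrinking to a Lebesgue density point of a level-set-free portion of $\DD$, or exploit that the $4c$ in Lemma~\ref{lemE} is a fixed constant while $\leb(f[J])$ relative to $\leb(J)$ is unconstrained when $f$ is non-constant — so that the fixed-constant estimate on $\len(\gr|(J\setminus\DD))$ cannot absorb all of the positive-measure image, forcing $f[\DD\cap J]$ to carry positive measure.
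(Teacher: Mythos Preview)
Your reduction to a subinterval $J'$ on which the graph is monotone (or, in your phrasing, $J'\subs\MM_c$) with $f$ non-constant is fine, and your length estimate $\len(\gr|(J'\setminus\DD))\leq 8c\,\leb(J')$ via Lemma~\ref{lemE} and its mirror is essentially Corollary~\ref{clp2}. (Your first ``dichotomy'' claim that $x\notin\DD$ forces $x\in E^+\cup E^-$ is false as stated---$f$ could be strictly monotone on one side with unequal Dini derivatives---but your alternative via Theorem~\ref{thmMM}(ii) does work, up to the harmless $\len$-null exceptional set.) Where you get stuck is real: finiteness of $\len(\gr|(J'\setminus\DD))$ cannot by itself force $\leb(f[J'\setminus\DD])=0$, and the density-point idea you sketch does not close the gap.

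The missing idea is a \emph{bootstrap to Lipschitz}. Your estimate is not just for $J'$: it holds for every subinterval $[x,y]\subs J'$, and projecting to the $y$-axis gives
\[
\leb\bigl(f[[x,y]\setminus\DD]\bigr)\leq\len\bigl(\gr|[x,y]\setminus\DD\bigr)\leq 4c\,|x-y|.
\]
Under the contradiction hypothesis $\leb(f[\DD\cap J'])=0$, this yields for all $x<y$ in $J'$
\[
|f(x)-f(y)|\leq\leb(f[x,y])\leq\leb\bigl(f[[x,y]\setminus\DD]\bigr)+\leb\bigl(f[[x,y]\cap\DD]\bigr)\leq 4c\,|x-y|+0,
\]
so $f$ is $4c$-Lipschitz on $J'$. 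Hence $f$ is differentiable a.e.\ on $J'$, i.e.\ $\leb(J'\setminus\DD)=0$. Now feed this back into the \emph{same} estimate: $\len(\gr|(J'\setminus\DD))\leq 4c\cdot 0=0$, hence $\leb(f[J'\setminus\DD])=0$. But then $\leb(f[J'])\leq\leb(f[J'\cap\DD])+\leb(f[J'\setminus\DD])=0$, contradicting non-constancy. This two-pass use of the inequality---first on all subintervals to get Lipschitz, then on the now-null $J'\setminus\DD$---is exactly the paper's argument.
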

\begin{proof}
Using Lemma~\ref{baire}
it is clearly enough to prove that if $f:[0,1]\to\Rset$, $f(0)\neq f(1)$,
and $\gr$ is monotone, then $\leb(f[\DD])>0$.
Suppose the contrary: $\leb(f[\DD])=0$.
Let $0\leq x<y\leq1$. Use the assumption and Corollary~\ref{clp2} to estimate
$\abs{f(x)-f(y)}$:
\begin{align*}
  \abs{f(x)-f(y)}&=\leb([f(x),f(y)])\leq\leb(f[x,y])\\
  &\leq\leb(f[[x,y]\setminus\DD])+\leb(f[[x,y]\cap\DD])\\
  &\leq 4c\,\leb([x,y]\setminus\DD)+\leb(f[\DD])
  \leq 4c\,\leb([x,y])=4c\abs{x-y}.
\end{align*}
It follows that $f$ is a Lipschitz function. Therefore it is differentiable
almost everywhere. Use Corollary~\ref{clp2} again to get
$\leb(f[[0,1]\setminus\DD])\leq\len(\gr|[0,1]\setminus\DD)=0$. Thus
$$
  \leb(f[\DD])\geq\leb(f[0,1])-\leb(f[I\setminus\DD])
  =\leb(f[0,1])\geq\abs{f(0)-f(1)}>0,
$$
which contradicts the assumption.
\end{proof}
Let us call a set \emph{perfectly dense} if its intersection with any nonempty
open set contains a perfect set.
\begin{thm}
If $f:I\to\Rset$ is a continuous function with a \si monotone graph,
then $f$ is dif\-ferentiable at a perfectly dense set.
\end{thm}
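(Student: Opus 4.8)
The plan is to derive the statement directly from Lemma~\ref{bigder} and the perfect set property for Borel sets, so no genuinely new construction is required. By the definition of perfect density it suffices to fix an arbitrary nonempty open $U\subs I$ and produce a perfect set contained in $\DD(f)\cap U$; replacing $U$ by a subinterval we may assume $U$ is a nondegenerate open interval $J$.

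Next I would split into two cases. If $f$ is constant on $J$, then $f'\equiv 0$ throughout $J$, so $J\subs\DD(f)$ and any nondegenerate closed subinterval of $J$ is already a perfect subset of $\DD(f)\cap U$. If instead $f$ is not constant on $J$, then Lemma~\ref{bigder} applies to the interval $J$ and gives $\leb\bigl(f[\DD(f)\cap J]\bigr)>0$; in particular the set $\DD(f)\cap J$ is uncountable, since a countable set cannot have an image of positive measure.

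To finish the second case I would invoke that $\DD(f)$ is a Borel subset of $\Rset$: each of the four Dini derivatives of a continuous function is an $[-\infty,\infty]$-valued Borel function (one restricts the difference quotients to rational increments, exhibiting the Dini derivative as a countable infimum of countable suprema of functions each continuous on an open set), and $\DD(f)$ is exactly the set on which the four coincide, hence Borel. An uncountable Borel subset of $\Rset$ contains a perfect set, so $\DD(f)\cap J$ does, and this perfect set lies in $\DD(f)\cap U$. Since $U$ was an arbitrary nonempty open subset of $I$, this shows $\DD(f)$ is perfectly dense.

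The proof is short; the one point I expect to need a careful (if routine) check is that $\DD(f)$ is Borel, which is what makes the perfect set property applicable. If one reads ``differentiable'' in the strict sense of possessing a finite derivative, it suffices to add the classical fact that for a continuous function the set where $f'=\pm\infty$ has Lebesgue measure zero; then $\{x\in J:f'(x)\in\Rset\}$ still has an image of positive measure, is again uncountable and Borel, and the argument goes through unchanged.
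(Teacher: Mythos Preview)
Your main argument is correct and is exactly the paper's: Lemma~\ref{bigder} gives $\leb\bigl(f[\DD(f)\cap J]\bigr)>0$ on any subinterval where $f$ is nonconstant, hence $\DD(f)\cap J$ is an uncountable Borel set and, by the Perfect Set Theorem, contains a perfect set. Your write-up is simply more explicit about reducing to a subinterval and about why $\DD(f)$ is Borel.

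One caveat concerns your closing remark about the ``strict'' (finite-derivative) reading. From $\leb(\DD_\infty)=0$ you infer that $\{x\in J:f'(x)\in\Rset\}$ still has image of positive measure, but that step requires $\leb\bigl(f[\DD_\infty\cap J]\bigr)=0$, i.e.\ a Luzin-type property that does not follow from $\leb(\DD_\infty)=0$ alone; the paper itself flags this as open for functions with monotone graph (see the ``Luzin property'' discussion in Section~\ref{sec:Q}). This does not affect the theorem as stated, since in this paper $\DD(f)$ explicitly allows infinite values.
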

\begin{proof}
If $f$ is constant on $I$, there is nothing to prove.
Otherwise Lemma~\ref{bigder}
yields $\leb(f[\DD(f)\cap I])>0$. Therefore $\DD(f)\cap I$ is an uncountable
Borel set. Thus it contains, by the Perfect Set Theorem, a perfect set.
\end{proof}
\begin{coro}\label{coroMM}
If $f:I\to\Rset$ is a continuous function, then $\inter\MM(f)\subs\cl{\DD(f)}$.
\end{coro}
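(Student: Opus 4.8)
The plan is to localise the problem and then invoke the preceding theorem on perfect density of $\DD(f)$. Let $x\in\inter\MM(f)$ and pick an open interval $J$ with $x\in J\subs\MM(f)$. By Proposition~\ref{meager2}(ii) the set $\simon(\gr)=\gr|\MM$ is \si monotone, hence so is its subset $\gr|J$; and $\gr|J$ is precisely the graph of the restriction $f\rest J$. Thus $f\rest J$ is a continuous function whose graph is a \si monotone space.

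Next I would apply the preceding theorem to $f\rest J$ (with $J$ playing the role of the domain): $f\rest J$ is differentiable at a perfectly dense subset of $J$, in particular $\DD(f\rest J)$ is dense in $J$. Since $J$ is open, differentiability of $f\rest J$ at a point of $J$ is the same as differentiability of $f$ there, so $\DD(f)\cap J$ is dense in $J$. Hence $x\in\cl{\DD(f)\cap J}\subs\cl{\DD(f)}$. As $x\in\inter\MM(f)$ was arbitrary, this yields $\inter\MM(f)\subs\cl{\DD(f)}$.

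The only delicate point is the passage to the restriction $f\rest J$: one has to note that $\gr|J$, viewed as a subset of $\gr$, inherits the \si monotone structure the theorem requires, and that differentiability of $f\rest J$ at interior points of $J$ coincides with differentiability of $f$. Both facts are routine once $J$ is taken to be open --- which is exactly the reason the corollary speaks of $\inter\MM(f)$ rather than of $\MM(f)$ itself. No new estimates are needed beyond what has already been established.
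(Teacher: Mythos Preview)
Your argument is correct and is essentially the intended one: localise to an open interval $J\subs\MM(f)$, observe that $\gr|J$ is \si monotone (as a subset of $\gr|\MM$, which is \si monotone by Proposition~\ref{meager2}(ii)), and apply the preceding theorem to $f\rest J$ to get density of $\DD(f)$ in $J$. The paper states the corollary without proof precisely because this deduction is immediate.
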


We now present several examples illustrating that one cannot prove much
more than Theorem~\ref{thmMM} and Corollary~\ref{coroMM} about
differentiability properties of $\MM$-points. The first two examples are nowhere
differentiable functions. Note that by the above corollary and
Proposition~\ref{meager3} such a function must have a small set of
$\MM$-points:
\begin{coro}\label{coroMM2}
If $f:I\to\Rset$ is a continuous, nowhere differentiable function,
then $\MM(f)$ is meager, i.e. every monotone set $M\subs\gr$ is nowhere dense.
\end{coro}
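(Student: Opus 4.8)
The plan is to deduce this immediately from Corollary~\ref{coroMM} together with the equivalences in Proposition~\ref{meager3}. Since $f$ is nowhere differentiable (interpreting this, as the definition of $\DD(f)$ suggests, to include infinite derivatives), we have $\DD(f)=\emptyset$, hence $\cl{\DD(f)}=\emptyset$. Corollary~\ref{coroMM} then gives $\inter\MM(f)\subs\cl{\DD(f)}=\emptyset$, so $\inter\MM(f)=\emptyset$.

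Next I would invoke Proposition~\ref{meager3}: the condition $\inter\MM(f)=\emptyset$ is precisely item~(iv) there, which is equivalent to item~(iii), namely ``$\MM(f)$ is meager in $I$'', and also to item~(i), ``every monotone set $M\subs\gr$ is nowhere dense in $\gr$''. Since $\psi_f$ is a homeomorphism and $\simon(\gr)=\gr|\MM(f)$, meagerness of $\MM(f)$ in $I$ transfers to $\simon(\gr)$ in $\gr$, but the statement as phrased only asks for the two conclusions just listed, which are read off directly. This completes the argument.

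There is essentially no obstacle here; the only point requiring a word of care is the precise reading of ``nowhere differentiable''. If one wished to allow the weaker reading (no \emph{finite} derivative anywhere, but possibly points where $f'(x)=\pm\infty$), one would need to check separately that $\cl{\DD(f)}$ still has empty interior, which is not automatic; so I would simply state explicitly that we use the convention from the definition of $\DD(f)$, under which $\DD(f)=\emptyset$. Everything else is a one-line chain of inclusions and citations.
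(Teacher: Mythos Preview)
Your proof is correct and is exactly the argument the paper intends: the text immediately preceding the corollary says it follows from Corollary~\ref{coroMM} and Proposition~\ref{meager3}, which is precisely the chain $\DD(f)=\emptyset\Rightarrow\inter\MM(f)=\emptyset\Rightarrow\MM(f)$ meager that you wrote out. Your remark on the reading of ``nowhere differentiable'' is apt and matches the paper's convention that $\DD(f)$ allows infinite values.
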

For $y\in\Rset$ let $\norm y=\dist(y,\Zset)$.
\begin{prop}\label{function}
The function $f(y)=\sum_{k=0}^\infty2^{-k}\norm{2^{k^2}y}$ is continuous
and has no $\MM$-points.
Therefore every monotone subset of $\gr$ is meager and $f$ is nowhere
differentiable.
\end{prop}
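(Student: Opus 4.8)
The plan is to show that $f(y)=\sum_{k=0}^\infty 2^{-k}\norm{2^{k^2}y}$ violates the $\MM$-point characterization \eqref{pwm2} at every point $y_0$. Continuity is immediate since the series converges uniformly (each term is bounded by $2^{-k-1}$) and each $\norm{2^{k^2}y}$ is continuous. The idea for the failure of the $\MM$-property is that the ``lacunary'' exponents $2^{k^2}$ grow so fast that, near any point, the $N$-th partial-sum term $2^{-N}\norm{2^{N^2}y}$ oscillates with slope $\pm 2^{N^2-N}$ over intervals of length $2^{-N^2}$, and this dwarfs both the contribution of the lower-order terms (which have slopes at most $\sum_{k<N}2^{k^2-k}\ll 2^{N^2-N}$) and the tail (which is uniformly small, of size $\lesssim 2^{-N}$). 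So on a suitable scale $\delta_N\approx 2^{-N^2}$ one can find, on the left of $y_0$, a point $x$ where $f$ has been pushed far above $f(y_0)$ by the $N$-th term, while on the right of $y_0$ one can find $z$ where $f$ is close to $f(x)$; then $\abs{f(x)-f(y_0)}$ is of order $2^{N^2-N}\delta_N=2^{-N}$, whereas $\abs{f(x)-f(z)}+\abs{z-x}$ can be made of order $o(2^{-N})$, contradicting \eqref{pwm2} for large $N$.

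Concretely, first I would fix $y_0$ and, for each large $N$, set $g_N(y)=\sum_{k<N}2^{-k}\norm{2^{k^2}y}$, $h_N(y)=\sum_{k>N}2^{-k}\norm{2^{k^2}y}$, so that $f=g_N+2^{-N}\norm{2^{N^2}\cdot}+h_N$. The function $g_N$ is piecewise linear with $\abs{g_N'}\le \sum_{k<N}2^{k^2-k}$, a quantity that is $o(2^{N^2-N})$ (indeed even $o(2^{(N-1)^2})$, which is the relevant comparison), and $\norm{h_N}_\infty\le 2^{-N}$. On an interval around $y_0$ of length comparable to $2^{-(N-1)^2}$ — hence containing many full periods $2^{-N^2}$ of the $N$-th sawtooth — I would locate, to the left of $y_0$, a subinterval on which $2^{-N}\norm{2^{N^2}y}$ climbs from $0$ to its maximum $2^{-N-1}$, and use the smallness of $g_N'$ and $h_N$ there to get a point $x<y_0$ with $f(x)-f(y_0)$ of order $2^{-N}$ (after subtracting the $O(2^{-N^2}\cdot\abs{g_N'})+O(2^{-N})$ error — here one must be a little careful that the lower-order oscillation doesn't conspire to cancel the main term, which is why the length scale $2^{-(N-1)^2}$, resolving the slowest relevant sawtooth, is chosen). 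Symmetrically, to the right of $y_0$ I would find $z>y_0$ with $f(z)$ within $o(2^{-N})$ of $f(x)$ and $\abs{z-x}=o(2^{-N})$, by exploiting that on the right of $y_0$ the $N$-th sawtooth again attains, at a point very near $y_0$, a value matching $\norm{2^{N^2}x}$ up to $o(1)$, while the other terms change by $o(2^{-N})$ over a span of $o(2^{-N})$.

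Once such $x<y_0<z$ are produced with $\abs{f(x)-f(y_0)}\gtrsim 2^{-N}$ and $\abs{f(x)-f(z)}+\abs{z-x}=o(2^{-N})$, inequality \eqref{pwm2} fails for every constant $c$ and every $\eps$ (taking $N$ large enough that $x,z\in(y_0-\eps,y_0+\eps)$), so $y_0$ is not an $\MM$-point. Since $y_0$ was arbitrary, $\MM(f)=\emptyset$; in particular $\inter\MM(f)=\emptyset$, so by Proposition~\ref{meager3} every monotone subset of $\gr$ is nowhere dense, hence meager. Finally, $f$ is nowhere differentiable because, by Theorem~\ref{thmMM}(i), $\DD(f)\subs\MM(f)=\emptyset$.

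The main obstacle I anticipate is the bookkeeping in the middle paragraph: one has to choose the scale $\delta_N$ and the points $x,z$ so that the principal sawtooth term genuinely dominates and is not neutralized by the combined (signed) contributions of $g_N$ and $h_N$. The clean way around this is to work at a length scale that resolves the \emph{slowest} of the lower-order sawteeth, namely $\approx 2^{-(N-1)^2}$, and to pick $x$ at a \emph{local maximum} of the $N$-th term within that window, where the competing terms are genuinely negligible in comparison because their total variation over such a window is $O(2^{-(N-1)^2}\cdot 2^{(N-1)^2-(N-1)})=O(2^{-(N-1)})=o(2^{-N})$ after the extra factor from the gap $N^2-(N-1)^2=2N-1$ between consecutive exponents. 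That gap is exactly what makes the lacunary structure win, and it is the point on which the estimate turns.
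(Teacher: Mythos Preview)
Your overall strategy is right, but there is a genuine gap in the handling of the tail $h_N(y)=\sum_{k>N}2^{-k}\norm{2^{k^2}y}$. The uniform bound $\norm{h_N}_\infty\leq 2^{-N}$ is correct but too weak: what you actually need is that $\abs{h_N(x)-h_N(y_0)}$ and $\abs{h_N(x)-h_N(z)}$ are $o(2^{-N})$, and the uniform bound only gives $O(2^{-N})$. This is fatal on both sides of your argument. For the lower bound $\abs{f(x)-f(y_0)}\gtrsim 2^{-N}$, the $N$-th sawtooth contributes at most $2^{-N-1}$, while the tail difference can be as large as $2^{1-N}$, so the tail may wipe out the main term. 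For the upper bound $\abs{f(x)-f(z)}=o(2^{-N})$, already the $(N{+}1)$-th term has amplitude $2^{-N-2}$ and period $2^{-(N+1)^2}\ll 2^{-N^2}$, so over a span $\abs{x-z}\sim 2^{-N^2}$ it oscillates $2^{2N+1}$ times; for generic $x,z$ one gets $\abs{h_N(x)-h_N(z)}\asymp 2^{-N}$, not $o(2^{-N})$.

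The paper's remedy is not an estimate but an exact cancellation. One takes $x=y_0-\tfrac{i}{4}\,2^{-N^2}$ for a suitable $i\in\{1,3\}$ and $z=x+2^{-N^2}$. Then for every $k>N$ the number $2^{k^2}(x-y_0)=-\tfrac{i}{4}\,2^{k^2-N^2}$ is an integer (since $k^2-N^2\geq 2N+1\geq 3$), so $\norm{2^{k^2}x}=\norm{2^{k^2}y_0}=\norm{2^{k^2}z}$ and the tails satisfy $h_N(x)=h_N(y_0)=h_N(z)$ \emph{exactly}. Also $2^{N^2}(z-x)=1$ forces $\norm{2^{N^2}x}=\norm{2^{N^2}z}$, so the $N$-th term cancels in $f(x)-f(z)$ as well; only $g_N$ survives there, and its contribution is $O(N2^{-3N})$ because $\abs{x-z}=2^{-N^2}$. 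The choice of $i$ guarantees $\abs{\norm{2^{N^2}y_0}-\norm{2^{N^2}x}}\geq\tfrac14$, hence $\abs{f(x)-f(y_0)}\geq 2^{-N-2}-O(N2^{-3N})$. Your outline is missing precisely this dyadic-shift trick; without it the tail cannot be tamed. (A minor separate slip: in your last paragraph, $O(2^{-(N-1)})$ is not $o(2^{-N})$; but once one works at scale $2^{-N^2}$ rather than $2^{-(N-1)^2}$, the $g_N$ estimate becomes $O(N2^{-3N})$ and this issue disappears.)
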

\begin{proof}
Continuity of $f$ is easy.
Fix $y\in\Rset$, $\eps>0$ and $c>0$. We want to disprove condition~\eqref{pwm2}.
Let $n\in\Nset$ be large enough (this will be specified later).
It is easy to check that there is $i\in\{1,3\}$ such that
\begin{equation}\label{Fab2}
  \bigl|\norm{2^{n^2}y}-\norm{2^{n^2}y-\tfrac i4}\bigr|\geq\tfrac14.
\end{equation}
Set $x=y-\frac i42^{-n^2}$, $z=x+2^{-n^2}$. Clearly
if $n$ is large enough, then $x\in(y-\eps)$ and $z\in(y,y+\eps)$. We show that
$x,z$ witness failure of~\eqref{pwm2}.
\begin{enumerate}[\rm(a)]
\item If $k>n$, then $-\frac i4 2^{k^2-n^2}$ is an integer. Therefore
  $\norm{2^{k^2}x}=\norm{2^{k^2}y}=\norm{2^{k^2}z}$.
\item Since $2^{n^2}z-2^{n^2}x=1$, we have $\norm{2^{n^2}x}=\norm{2^{n^2}z}$.
\item If $k<n$ and $t=y$ or $t=z$, then
  $\bigl|\norm{2^{k^2}t}-\norm{2^{k^2}x}\bigr|\leq 2^{k^2}\abs{t-x}\leq 2^{k^2-n^2}$.
  Thus
$$
  \sum_{k<n}2^{-k}\bigl|\norm{2^{k^2}t}-\norm{2^{k^2}x}\bigr|
  \leq\sum_{k<n}2^{-k}2^{k^2-n^2}
  \leq n2^{2-3n}.
$$
\end{enumerate}
It follows that
\begin{align*}
  \abs{f(z)-f(x)}
  &\overset{(a,b)}{\leq}
  \sum_{k<n}2^{-k}\bigl|\norm{2^{k^2}z}-\norm{2^{k^2}x}\bigr|
  \overset{(c)}{\leq}
  n2^{2-3n},\\
\abs{f(y)-f(x)}
  &\overset{(a)}{\geq}
  2^{-n}\bigl|\norm{2^{n^2}y}-\norm{2^{n^2}x}\bigr|
  -\sum_{k<n}2^{-k}\bigl|\norm{2^{k^2}y}-\norm{2^{k^2}x}\bigr| \\
  &\overset{(\ref{Fab2},c)}{\geq}
  2^{-n-2}-n2^{2-3n}.
\end{align*}
Combine these estimates to get
$$
  \frac{\abs{f(y)-f(x)}}{\abs{f(z)-f(x)}+\abs{z-x}}\geq
  \frac{2^{-n-2}-n2^{2-3n}}{n2^{2-3n}+2^{-n^2}}.
$$
With a proper choice of $n$ the rightmost expression
is as large as needed, in particular greater than $c$. Therefore~\eqref{pwm2}
fails.
\end{proof}

\begin{ex}
Let $f$ be the above function. Define $g(x)=(x-\frac12)\sin\frac1{2x-1}f(x)$.
It is easy to derive from the above that $g$ has no $\MM$-points
except $x=\frac12$. Straightforward calculation of Dini derivatives
at $x=\frac12$ gives $\urd g(\frac12)=\uld g(\frac12)=\frac12$ and
$\lrd g(\frac12)=\lld g(\frac12)=-\frac12$.
Therefore $\frac12$ is an $\MM$-point (actually an $\MM_1$-point).
It also follows from Theorem~\ref{thmMM} that $g$ is differentiable at no point.
In particular $\DD(g)$ is not dense in $\MM(g)$.
\end{ex}

\begin{ex}
Let $T(x)=\sum_{n=0}^\infty 2^{-n}\norm{2^nx}$ be the \emph{Takagi function}.
The following facts can be found in~\cite{takagi:survey}.
$T$ does not possess a finite one-sided derivative at any point.
However, if $x$ is a dyadic rational, then $\rd T(x)=+\infty$ and
$\ld T(x)=-\infty$. Also $T'(x)=+\infty$ at a dense set.

It follows that the sets $\DD(T)$, $\MM_1(T)$, $\MM(T)$
as well as their complements are dense.
\end{ex}

%%%%%%%%%%%%%%%%%%%%%%%%%%%%%%%%%%%%%%%%%%%%%%%%%%%%%%%%%%%%%%%%%%%%%%%%%%%%%%%%
%%%%%%%%%%%%%%%%%%%%%%%%%%%%%%%%%%%%%%%%%%%%%%%%%%%%%%%%%%%%%%%%%%%%%%%%%%%%%%%%
\section{A non-differentiable function with a monotone graph}
\label{sec:VASEK}
%%%%%%%%%%%%%%%%%%%%%%%%%%%%%%%%%%%%%%%%%%%%%%%%%%%%%%%%%%%%%%%%%%%%%%%%%%%%%%%%
%%%%%%%%%%%%%%%%%%%%%%%%%%%%%%%%%%%%%%%%%%%%%%%%%%%%%%%%%%%%%%%%%%%%%%%%%%%%%%%%

In this section we provide an example of a continuous, almost nowhere
differentiable function on $[0,1]$ with a monotone graph.
Note that it follows from the above results that such a function necessarily
have the following properties:
\begin{itemize}
\item Every point of $[0,1]$ is an $\MM$-point,
\item the function is almost nowhere approximately differentiable,
\item almost all points are knot points (actually approximate knot points),
\item the function has a derivative at a perfectly dense set.
\end{itemize}
\begin{thm}\label{vas1}
For any $c>1$ there is a continuous, almost nowhere differentiable
function $f:[0,1]\to\Rset$ with a symmetrically $c$-monotone graph.
\end{thm}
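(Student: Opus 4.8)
The plan is to build $f$ as the uniform limit of a sequence of piecewise linear functions $f_n$, obtained by a recursive ``bumping'' construction on a nested sequence of partitions of $[0,1]$. At each stage we have a finite family of intervals on which $f_n$ is affine; on each such interval $[a,b]$ with endpoints at equal height (or, more generally, on each piece) we subdivide and replace the affine piece by a zig-zag (tent-like) broken line that oscillates with a controlled amplitude. The amplitude of the added oscillation on an interval of length $\ell$ must be chosen $\ll \ell$ so that condition $\mathsf P_{c-1}$ is preserved at every stage (using Lemma~\ref{L4}(ii) to get symmetric $c$-monotonicity in the limit); but the amplitude must not decay too fast relative to $\ell$, so that the slopes of the linear pieces blow up along the construction, forcing non-differentiability. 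The standard device is to make the ratio (amplitude)/(length of subinterval) go to $0$ slowly while the number of oscillations per parent interval grows fast enough that on most subintervals the secant slopes tend to $+\infty$ and $-\infty$ alternately.

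The key steps, in order, are as follows. First, fix $c>1$ and set $\eta=c-1>0$; I would introduce a parameter budget, say a summable-to-something sequence controlling how much ``$\mathsf P$-slack'' each stage consumes, arranged so the cumulative effect keeps $\mathsf P_{c-1}$ valid. Second, describe the one-step operation: given an affine piece over $[a,b]$, pick a large integer $m_n$ and insert a broken line through points $(a + j(b-a)/m_n, \cdot)$ whose values alternate above and below the original affine values by $\pm\,\theta_n\,(b-a)$, where $\theta_n\downarrow 0$. Verify that this operation, applied simultaneously on all current pieces, (a) changes $f_{n-1}$ uniformly by at most $\theta_n\max(\text{lengths})$, hence $f_n\to f$ uniformly; (b) only introduces new ``horizontal chords'' $f(x)=f(y)$ of length comparable to a single subinterval $\le (b-a)/m_n$, across which the oscillation of later stages is controlled — this is what keeps $\mathsf P_{c-1}$. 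Third, check the $\mathsf P_{c-1}$ bound for the limit $f$: if $x<y$ with $f(x)=f(y)$, locate the finite stage at which $x,y$ first lie in a common piece whose endpoints are at that height; estimate $\max_{x\le t\le y}|f(x)-f(t)|$ by summing the contributions of all stages $\ge n$, each bounded by $\theta_k\cdot(\text{relevant length})$, and choose the $\theta_k$ so the total is $\le (c-1)|x-y|$. Fourth, prove almost-nowhere differentiability: show that for a.e.\ $x$, $x$ lies infinitely often strictly inside subintervals on which the secant slope from $x$ to the near endpoint exceeds any prescribed bound, by a Borel--Cantelli / density argument — one arranges that the ``bad'' set at stage $n$ has measure at least $(1-\epsilon_n)$ with $\sum\epsilon_n<\infty$ is too strong, so instead use that the complement where $f'$ could exist is contained in a limsup of sets of small measure; the cleanest route is to invoke that at a point of differentiability the secant slopes are bounded, contradicting the infinitely-often-large oscillation built at every scale. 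Finally, conclude symmetric $c$-monotonicity from Lemma~\ref{L4}(ii).

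The main obstacle I expect is the simultaneous bookkeeping in step three: ensuring that $\mathsf P_{c-1}$ survives the limit while the amplitudes $\theta_n$ are still large enough (relative to the shrinking subinterval lengths) to destroy differentiability almost everywhere. These two demands pull in opposite directions, and the delicacy is that a horizontal chord $f(x)=f(y)$ created at a coarse stage must not be ``filled in'' too much by the fine-stage oscillations lying under it; this forces the zig-zags at stage $n$ to return exactly to the parent affine values at the subdivision nodes (so that new equal-height pairs are confined to within one stage-$n$ subinterval), and it forces the geometric-type decay $\sum_{k\ge n}\theta_k\lesssim \theta_n\cdot(\text{one subinterval length ratio})$ to close the $\mathsf P$ estimate with the fixed constant $c-1$. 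Getting the constants to fit — in particular choosing $m_n$ and $\theta_n$ so that $\theta_n m_n\to\infty$ (non-differentiability) yet $\sum_k \theta_k (b-a)/(\text{stage-}n\text{ mesh})\le (c-1)$ on every chord — is where the real work lies; everything else (uniform convergence, continuity, and the final appeal to Lemma~\ref{L4}(ii)) is routine.
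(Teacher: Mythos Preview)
Your outline is on the right track—the paper's proof also builds $f$ as a uniform limit of piecewise linear functions via recursive subdivision and appeals to Lemma~\ref{L4}(ii)—but the specific ``alternating zig-zag'' shape you describe cannot satisfy both requirements at once. On a parent piece of length $\ell$ where $f_{n-1}$ is (nearly) constant, your stage-$n$ modification creates horizontal chords of length $\sim\ell/m_n$ across which the oscillation \emph{from stage $n$ alone} is $\sim\theta_n\ell$; the inequality $\theta_n\ell\le(c-1)\cdot(\ell/m_n)$ then forces $\theta_n m_n\lesssim c-1$. But $\theta_n m_n$ is precisely the slope added at stage $n$, so this keeps the limit Lipschitz and hence differentiable almost everywhere, contrary to what you need. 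Your own final paragraph asks for $\theta_n m_n\to\infty$; the point is that with a symmetric up--down zig-zag this is incompatible with $\mathsf P_{c-1}$.

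The paper resolves this by treating the two kinds of affine pieces differently rather than applying one uniform zig-zag. On a piece where the current function is \emph{constant}, it inserts a single bump (five equal subintervals: flat, up with slope $\tfrac56$, flat, down with slope $-\tfrac56$, flat) of height $\tfrac16\ell$; the only horizontal chord this produces has length $\tfrac35\ell$, so the oscillation-to-length ratio is bounded (and in fact the whole construction ends up satisfying $\mathsf P_1$, after which one scales by $c-1$). On a piece where the current function is \emph{non-constant}, it replaces the affine segment by a ``steep--flat--steep'' profile (each steep part of length $\tfrac15\ell$ carrying half the total rise), which introduces no new horizontal chord but multiplies the slope by $\tfrac52$. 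Thus horizontal chords occur only on bump pieces with fixed bounded geometry, while the slopes on the steep pieces grow geometrically, and a separate measure-zero argument disposes of the residual set. This decoupling of chord control from slope growth is the idea your plan is missing.
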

The proof is a bit involved.
The function $f$ we construct satisfies condition $\mathsf P_1$. That is enough,
because given any $c>1$, the function $x\mapsto(c-1)f(x)$ satisfies obviously
condition $\mathsf P_{c-1}$ and is thus by Lemma~\ref{L4} $c$-monotone.
We first construct the function and then prove its properties in
a sequence of lemmas.

\subsection*{Construction of the function}
The function $f$ is defined as a limit of a sequence of
piecewise linear continuous functions $f_n:[0,1]\to [0,1]$ that we now define.

We recursively specify finite sets
$\mc A_n=\{a_n^k:k=0,\dots,r_n\}\subs[0,1]$ such that
$$
  0=a_n^0<a_n^1<\dots<a_n^{r_n}=1
$$
and values of $f_n$ at each point of $\mc A_n$.
The function $f_n$ is then defined as the unique function that is linear between
consecutive points of $\mc A_n$.

For $n=0$ put $\mc A_0=\{0,1\}$ and $f_0(0)=f_0(1)=0$.

The induction step:
Suppose $f_n$ and $\mc A_n=\{a_n^k:k=0,\dots,r_n\}$ are constructed.
Let $k<r_n$ be arbitrary.
For $l=0,\dots,5$ set
$x_l=\frac l5 a_n^{k+1}+(1-\frac l5)a_n^k$.
%$x_l=\dfrac{la_n^{k+1}+(5-l)a_n^k}{5}$.

\noindent
If $f_n(a_n^k)=f_n(a_n^{k+1})$, set $A_n^k=\{x_l:l=1,\dots,5\}$ and
\begin{align*}
  f_{n+1}(x_0)&=f_{n+1}(x_1)=f_{n+1}(x_4)=f_{n+1}(x_5)=f_n(a_n^k),\\
  f_{n+1}(x_2)&=f_{n+1}(x_3)=f_n(a_n^k)+\tfrac16\abs{a_n^{k+1}-a_n^k}.
\intertext{If $f_n(a_n^k)\neq f_n(a_n^{k+1})$, set $A_n^k=\{x_0,x_1,x_4,x_5\}$ and}
  f_{n+1}(x_0)&=f_n(a_n^k),\\
  f_{n+1}(x_5)&=f_n(a_n^{k+1}),\\
  f_{n+1}(x_1)&=f_{n+1}(x_4)=\tfrac12\bigl(f_n(a_n^k)+f_n(a_n^{k+1})\bigr)
\end{align*}
and let $\mc A_{n+1}=\bigcup_{k=0}^{r_n-1}A_n^k$.

\begin{lem}\label{L2}
Let $n\in\Nset$ and $k<r_n$. Then the following holds:
\begin{enum}
\item If $k>0$, then $\abs{a_n^{k-1}-a_n^k}
       \leq 3\abs{a_n^{k+1}-a_n^k}\leq 9\abs{a_n^{k-1}-a_n^k}$,
\item $\abs{a_{2n}^{k+1}-a_{2n}^k}\leq \left(\frac{3}{25}\right)^n$,
\item $\abs{a_{2n+1}^{k+1}-a_{2n+1}^k}\leq \frac15\left(\frac{3}{25}\right)^n$,
\item $\abs{a_n^{k+1}-a_n^k}\geq \left(\frac15\right)^n$,
\item $f_i(a_n^k)=f_n(a^k_n)$ if $i\geq n$,
\item $\abs{f_n(a_n^{k+1})-f_n(a_n^k)}\leq\frac16\left(\frac{1}{2}\right)^n$,
\item $\frac{\abs{f_n(a_n^k)-f_n(a_n^{k+1})}}{\abs{a_n^k-a_n^{k+1}}}=
  0$ or $\frac{\abs{f_n(a_n^k)-f_n(a_n^{k+1})}}{\abs{a_n^k-a_n^{k+1}}}\geq
  \frac56$,
\item if $i>0$ and $x\in [a_n^k,a_n^{k+1}]$, then
  \begin{align*}
    \min\bigl(f_n(a^k_n),f_n(a_n^{k+1})\bigr)&\leq f_{n+i}(x)\\
    &\leq
    \max\bigl(f_n(a^k_n),f_n(a_n^{k+1})\bigr)+\abs{a^{k+1}_n-a_n^k}
    \,\sum_{j=1}^i6^{-j},
  \end{align*}
\item if $i>0$, $x\in (a_n^k,a_n^{k+1})$ and $f_n(a_n^k)\neq f_n(a_n^{k+1})$, then
  $$
    f_{n+i}(x)<\max\bigl(f_n(a^k_n),f_n(a_n^{k+1})\bigr),
  $$
\item $f_n$ is continuous and $f_n(x)\in [0,1]$ for all $x\in [0,1]$.
\end{enum}
\end{lem}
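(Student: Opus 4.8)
The plan is to prove all ten items by a single induction on $n$, exploiting the self-similar (two-step) structure of the construction: one application of the induction step handles a ``flat'' subinterval (where $f_n$ takes equal values at the endpoints) by inserting the little bump of height $\frac16$ of the interval length, and the next handles a ``slanted'' subinterval by the $0,\frac15,\frac45,1$ subdivision whose two extreme fifths have slope $\frac56$ of the original slope and whose middle is flat. First I would record the base case $n=0$, where $\mc A_0=\{0,1\}$, $f_0\equiv0$, and every claimed inequality is immediate or vacuous. For the induction step, fix $k<r_n$ and the interval $[a_n^k,a_n^{k+1}]$ of length $L=|a_n^{k+1}-a_n^k|$; there are exactly two cases governed by whether $f_n(a_n^k)=f_n(a_n^{k+1})$, and in each case the new nodes $x_0<\dots<x_5$ (or a subset thereof) are explicitly given, so I can read off the lengths of the child intervals ($\frac15 L$ in the slanted case; $\frac15 L$ throughout the flat case) and the values of $f_{n+1}$ at them.

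Then I would verify the items in a convenient order. Items (iv) and (ii)--(iii) are pure bookkeeping on interval lengths: a child interval always has length $\ge\frac15$ of its parent (giving (iv) by induction, since the parent has length $\ge(1/5)^n$), and alternately the contraction ratio is $\frac15$ then $\frac5{25}\cdot\frac35=\frac3{25}$... more precisely two successive steps multiply lengths by at most $\frac15\cdot\frac35=\frac3{25}$ in the flat-then-whatever pattern, which after bounding the first step gives (ii) and (iii). Item (i) is the ``bounded geometry'' estimate for consecutive intervals at the same level: within a single parent all five children are equal (flat case) or the pattern of lengths is $\frac15,\frac35,\frac15$... no, the slanted case produces intervals $[x_0,x_1],[x_1,x_4],[x_4,x_5]$ of lengths $\frac15 L,\frac35 L,\frac15 L$, so the ratio of neighbors is at most $3$; across a parent boundary one must compare the last child of one parent with the first child of the next, and here I invoke the induction hypothesis (i) at level $n$ to control $|a_n^{k\pm1}-a_n^k|$ and combine. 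Item (v) is immediate because the construction only redefines $f_{n+1}$ at new points strictly between old nodes and keeps old nodal values, so $f_i(a_n^k)=f_n(a_n^k)$ for $i\ge n$. Items (vi) and (vii): in the flat case the new increments are $0$ or $\pm\frac16 L$, and since $L\le(1/5)^n$... but we need $\frac16(1/2)^n$ — here one uses that the \emph{slanted} intervals at level $n$ carry slope $\ge\frac56$ while in the flat case the increment over a child of length $\frac15 L$ is either $0$ or $\frac16 L=\frac56\cdot\frac15 L$ giving slope exactly $\frac56$; tracking the height bound $|f_n(a_n^{k+1})-f_n(a_n^k)|\le\frac16(1/2)^n$ through both cases (the slanted case only halves it, $\frac12\cdot\frac16(1/2)^n\le\frac16(1/2)^{n+1}$, and the flat case replaces it by $\frac16 L\le\frac16(1/5)^n$, which must be reconciled with the target by a finer bound — I expect the correct invariant is that slanted intervals at level $n$ have length $\le(1/5)^{\lceil n/2\rceil}$ or so, matching (ii)--(iii), and $\frac16 L\le\frac16(1/2)^n$ then follows from $(3/25)^{n/2}\le(1/2)^n$). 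Item (vii) (``slope is $0$ or $\ge\frac56$'') then drops out: flat-case children have slope $0$ except the two rising/falling edges of the bump with slope exactly $\frac56$, and slanted-case children have slopes $\frac56\cdot(\text{parent slope})$, $0$, $\frac56\cdot(\text{parent slope})$, so by induction every nonzero slope is $\ge\frac56$.

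The remaining items (viii), (ix), (x) are the ones that propagate a \emph{uniform} bound through infinitely many future steps. For (viii) I fix $n,k$ and induct on $i$: the lower bound $f_{n+i}(x)\ge\min(f_n(a_n^k),f_n(a_n^{k+1}))$ holds because the bumps only ever go \emph{up} from the current piecewise-linear graph (never below the min of the two endpoint values of any node-interval), which one checks from the explicit formulas; for the upper bound, each further subdivision of a currently-flat subinterval of length $\ell$ raises the max by $\frac16\ell$, and the flat subintervals at successive levels shrink by a factor $\le\frac16$... no — they shrink by $\frac15$ but the \emph{bump heights} form a geometric series with ratio $\frac16$ relative to the shrinking lengths, so the total added height over the parent is at most $L\sum_{j\ge1}6^{-j}$ — this is exactly the claimed bound and the clean way to see it is: level by level, the contribution to $\sup$ from step $n+j$ is at most $6^{-j}L$ because the flat pieces being bumped at that step have total length $\le$ (a bit less than) $L$ and each contributes $\frac16$ of its own length, and crucially these contributions \emph{stack} additively at a point only through the nested sequence of intervals containing $x$, one per level. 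Item (ix) says that in the slanted case the interior never reaches the parent's max: the only child touching the max-endpoint value is the extreme fifth adjacent to it, and that child is itself slanted with the endpoint as its own max, so by induction its interior stays strictly below — a clean induction on $i$ using (viii) applied to that child. Item (x), continuity and range in $[0,1]$, follows since each $f_n$ is piecewise linear (hence continuous) by construction, and the range bound is a special case of (viii)/(vi): the values stay in $[\,0,\ \frac16\sum_{j\ge0}(1/2)^j\cdot(\dots)\,]\subseteq[0,1]$ — one just needs the crude total-height bound $\sum_n\frac16(1/2)^n\cdot(\text{something})<1$, which (vi) and (viii) supply. The main obstacle I anticipate is item (viii) — specifically pinning down the right bookkeeping so that the bump contributions at level $n+j$ are genuinely bounded by $6^{-j}L$ and genuinely additive only along the nested chain through $x$ (not across siblings), and making the flat-case height bound in (vi) mesh with the target $\frac16(1/2)^n$ rather than the weaker $\frac16(1/5)^n$; both hinge on correctly identifying that ``slanted intervals at level $n$'' have length controlled by $(1/2)^n$ via the alternating $\frac15,\frac35$ contraction of (ii)--(iii). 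Everything else is routine case-checking against the explicit formulas.
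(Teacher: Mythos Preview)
Your overall strategy matches the paper's: items (i)--(v) are read off from the construction, (vi) is derived from (ii)--(iii), (vii) goes by induction on $n$, and (viii)--(ix) by induction on $i$, with (x) falling out at the end. The paper's own proof is extremely terse (``(i)--(v) follows right away from the construction''), so your expanded outline is a fair fleshing-out of the same argument.

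There is, however, one concrete miscalculation that would make your induction for (vii) fail as written. In the slanted case the child $[x_0,x_1]$ has length $\tfrac15 L$ and height difference $\tfrac12\lvert f_n(a_n^{k+1})-f_n(a_n^k)\rvert$, so the slope multiplies by $\tfrac{1/2}{1/5}=\tfrac52$, not $\tfrac56$. With your claimed factor $\tfrac56$ the nonzero slopes would \emph{decrease} geometrically and eventually drop below $\tfrac56$, killing the induction. With the correct factor $\tfrac52$ (which is what the paper uses) the slopes only grow, and the conclusion ``nonzero slope $\ge\tfrac56$'' is immediate. A related slip affects your bookkeeping for (viii): the nested interval containing $x$ shrinks by a factor $\tfrac15$ at a flat step and by $\tfrac15$ or $\tfrac35$ at a slanted step, so the additive bump contributions do not literally form a $6^{-j}$ series the way you describe; the clean argument is the paper's two-step induction on $i$ (compare level $n$ to $n{+}1$ via the $i{=}1$ case, then $n{+}1$ to $n{+}1{+}p$ via the inductive hypothesis at level $n{+}1$), which avoids having to sum the bumps directly. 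Once you correct the $\tfrac52$ factor and run (viii) this way, everything else in your plan goes through.
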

\begin{proof}
(i)--(v) follows right away from the construction of functions $f_n$.
(vi) can be easily proved from the construction using (ii) and (iii).

(vii): Case $n=0$ is trivial.
Assume (vii) holds for some $n\geq 0$ and we prove it for $n+1$.
Let $i<r_{n+1}$ be arbitrary. There exists $k<r_n$ such that
$a_{n+1}^i,a_{n+1}^{i+1}\in [a^k_n,a^{k+1}_n]$.

\textbullet{}
If $\dfrac{\abs{f_n(a_n^k)-f_n(a_n^{k+1})}}{\abs{a_n^k-a_n^{k+1}}}=0$, then
$$
  \frac{\abs{f_{n+1}(a_{n+1}^i)-f_{n+1}(a_{n+1}^{i+1})}}{\abs{a_{n+1}^i-a_{n+1}^{i+1}}}=0
  \quad\vee\quad
  \frac{\abs{f_{n+1}(a_{n+1}^i)-f_{n+1}(a_{n+1}^{i+1})}}{\abs{a_{n+1}^i-a_{n+1}^{i+1}}}
  =\frac56.
$$

\textbullet{}
If $\dfrac{\abs{f_n(a_n^k)-f_n(a_n^{k+1})}}{\abs{a_n^k-a_n^{k+1}}}\neq 0$,
then
$$
  \frac{\abs{f_{n+1}(a_{n+1}^i)-f_{n+1}(a_{n+1}^{i+1})}}{\abs{a_{n+1}^i-a_{n+1}^{i+1}}}=0
$$
or
$$
  \frac{\abs{f_{n+1}(a_{n+1}^i)-f_{n+1}(a_{n+1}^{i+1})}}{\abs{a_{n+1}^i-a_{n+1}^{i+1}}}
  =\frac{5}{2}\frac{\abs{f_n(a_n^k)-f_n(a_n^{k+1})}}{\abs{a_n^k-a_n^{k+1}}}\geq \frac56.
$$
(viii): The first inequality is obvious. The second inequality is proved
by induction over $i$.
Case $i=1$ easily follows from the construction. Suppose that this statement is true
for $i=p$. We show that it is also true for $i=p+1$.
Find $l<r_{n+1}$ such that $x\in [a_{n+1}^l,a_{n+1}^{l+1}]$ and
use the induction hypothesis to compare $f_n(a^k_n),f_n(a^{k+1}_n)$ with
$f_{n+1}(a^l_{n+1}),f_{n+1}(a^{l+1}_{n+1})$ (which is the case $i=1$) and
$f_{n+1}(a^l_{n+1}),f_{n+1}(a^{l+1}_{n+1})$ with $f_{n+p+1}(x)$ (which is the case $i=p$).

(ix): This is similar to (viii). Case $i=1$ easily follows from
the construction. Proceed by induction: Assume that the statement is true for
$i=p$. We show that it is also true for $i=p+1$. Find $l<r_{n+1}$ such that
$x\in [a_{n+1}^l,a_{n+1}^{l+1}]$.

If $f(a_{n+1}^l)\neq f(a_{n+1}^{l+1})$ then use the statement to compare
$f_n(a^k_n),f_n(a^{k+1}_n)$ with
$f_{n+1}(a^l_{n+1}),f_{n+1}(a^{l+1}_{n+1})$ (which is the case $i=1$) and
$f_{n+1}(a^l_{n+1}),f_{n+1}(a^{l+1}_{n+1})$ with $f_{n+p+1}(x)$ (which is the case $i=p$).

If $f(a_{n+1}^l)=f(a_{n+1}^{l+1})$ then by the construction and (vii) we have
\begin{align*}
  \frac{25}{36}\abs{a_{n+1}^l-a_{n+1}^{l+1}}&=\frac{5}{12}\abs{a_n^{k+1}-a_n^k}\\
  &\leq\frac12\max\bigl(f_n(a^k_n),f_n(a_n^{k+1})\bigr)-\min\bigl(f_n(a^k_n),f_n(a_n^{k+1})
  \bigr)\\
  &=\max\bigl(f_n(a^k_n),f_n(a_n^{k+1})\bigr)-f_{n+1}(a_{n+1}^l).
\end{align*}
By (viii) we have
$$
  f_{n+p+1}(x))\leq f_{n+1}(a_{n+1}^l)+\frac15\abs{a_{n+1}^l-a_{n+1}^{l+1}}.
$$
Thus $f_{n+p+1}(x)<\max\bigl(f_n(a^k_n),f_n(a_n^{k+1}\bigr).$

(x) can be easily proved from the construction using (viii).
\end{proof}
\begin{lem}\label{L3}
The functions $f_i$ satisfy condition $\mathsf P_1$ for every $i$.
\end{lem}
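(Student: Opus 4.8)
The plan is to prove by induction on $n$ (equivalently, on the index $i$) that $f_n$ satisfies $\mathsf P_1$. For $n=0$ the function $f_0\equiv 0$ trivially does. For the inductive step, assume $f_n$ satisfies $\mathsf P_1$ and fix $x<y$ with $f_{n+1}(x)=f_{n+1}(y)=:w$ and a point $t\in(x,y)$; we must show $\abs{f_{n+1}(t)-w}\le y-x$. First I would perform the usual reduction: replace $[x,y]$ by the largest subinterval that still contains $t$ and on whose endpoints $f_{n+1}$ equals $w$. Since $\mathsf P_1$ is unaffected by this, we may assume that $f_{n+1}-w$ has no zero in $(x,y)$, so that $[x,y]$ is a one-sided excursion of $f_{n+1}$ from level $w$; it then suffices to prove that every upward excursion ($f_{n+1}>w$ on $(x,y)$) satisfies $\max_{[x,y]}f_{n+1}-w\le y-x$ and every downward excursion satisfies $w-\min_{[x,y]}f_{n+1}\le y-x$.

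If $[x,y]$ is contained in a single macro-interval $[a_n^k,a_n^{k+1}]$, the claim is immediate from the explicit definition: if $f_n(a_n^k)\ne f_n(a_n^{k+1})$ the graph there is monotone and no excursion occurs, while if $f_n(a_n^k)=f_n(a_n^{k+1})$ it is a single symmetric ``tent'' of height $\tfrac16\abs{a_n^{k+1}-a_n^k}$ on a base of width $\tfrac35\abs{a_n^{k+1}-a_n^k}$, for which a one-line computation gives height $\le$ width for every upward excursion (and shows there is no downward one). The heart of the proof is the case where $[x,y]$ meets several macro-intervals. Here, examining the endpoint $x$ and using $\max_{[x,y]}f_{n+1}>f_{n+1}(x)=w$, one checks that $x$ must lie in a non-flat macro-interval, on which $f_{n+1}$ is monotone, and whose outer endpoint $a$ satisfies $f_n(a)\le w$; symmetrically for $y$. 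Consequently the points of $\mathcal A_n$ lying in $(x,y)$ form an inner interval $[p,q]$ over which $f_{n+1}$ coincides with the piecewise-linear skeleton $f_n$ at the partition points and — by Lemma~\ref{L2}(viii) (the geometric series $\sum 6^{-j}$) — exceeds it by at most $\tfrac16$ of a macro-interval length inside each flat block. Now I would take two points $s_1<s_2$ with $f_n(s_1)=f_n(s_2)=w$ straddling the place where $f_n$ attains its maximum over the relevant range (these exist by the intermediate value theorem, since $f_n\le w$ at the outer endpoints of the two end macro-intervals), apply $\mathsf P_1$ for $f_n$ to the pair $s_1,s_2$ to get $\max_{[x,y]}f_n-w\le s_2-s_1$, and combine this with the skeleton estimate of Lemma~\ref{L2}(viii), the comparability of adjacent macro-interval lengths (Lemma~\ref{L2}(i)), and the large slack in the construction (a flat block of length $\ell$ contributes width $\ell$ but overshoot only $\tfrac16\ell$) to conclude $\max_{[x,y]}f_{n+1}-w\le y-x$. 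For a downward excursion the argument is dual but genuinely different, because flat blocks always bulge \emph{upward}: Lemma~\ref{L2}(viii) now bounds $f_{n+1}$ from \emph{below} by its skeleton, so a downward excursion of $f_{n+1}$ is no deeper than the corresponding feature of $f_n$, and $\mathsf P_1$ for $f_n$ again finishes the estimate.

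The main obstacle is the numerical bookkeeping in the multi-macro-interval case. The points $s_1,s_2$ need not lie in $[x,y]$: precisely when $w$ is the half-way value $\tfrac12\bigl(f_n(a_n^k)+f_n(a_n^{k+1})\bigr)$ of an end block and $x$ (or $y$) is the corner of the corresponding flat step of $f_{n+1}$, $s_1$ (or $s_2$) sits outside $[x,y]$ by an amount of the order of a macro-interval length. One then has to verify that this end discrepancy, together with the $\sum 6^{-j}$ overshoot supplied by Lemma~\ref{L2}(viii), is still absorbed by the width $y-x$; this is exactly where the precise constants of the construction (five subintervals, bump ratio $\tfrac16$, and hence the comparison constants of Lemma~\ref{L2}(i),(iv),(vii)) are used.
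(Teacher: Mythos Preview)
Your inductive scheme is a natural first idea, but it is genuinely different from the paper's argument and, as written, does not close. The paper does \emph{not} argue by induction from $f_n$ to $f_{n+1}$; instead, for a fixed $i$ and a fixed excursion of $f_i$ above (or below) the level $w=f_i(x)=f_i(y)$, it locates the minimal scale $n\le i$ at which the maximum of the excursion sits inside a flat block $(a_n^k,a_n^{k+1})\subset(x,y)$, shows from the construction that this flat block is the central fifth of a flat block $(a_{n-1}^l,a_{n-1}^{l+1})$ at the previous scale, and then uses the minimality of $n$ to force $x$ (or $y$) into that larger block. Lemma~\ref{L2}(viii) then gives the height of the excursion as at most $\tfrac15|a_{n-1}^l-a_{n-1}^{l+1}|=|a_n^k-a_n^{k+1}|\le|y-x|$. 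The downward case is handled by a symmetric structural argument using Lemma~\ref{L2}(ix). No inductive hypothesis on $f_n$ is invoked.

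The gap in your approach is exactly the ``numerical bookkeeping'' you flag: the hypothesis $\mathsf P_1$ for $f_n$ is too weak to yield $\mathsf P_1$ for $f_{n+1}$. Already at $n=1$ the accounts do not balance. Take $w=\tfrac1{12}$, the midpoint value on the non-flat macro-intervals $[1/5,2/5]$ and $[3/5,4/5]$. The excursion of $f_2$ above $w$ runs from $x=9/25$ (the right end of the flat step on $[1/5,2/5]$) to $y=16/25$, so $y-x=7/25$. Your points $s_1,s_2$ with $f_1(s_j)=w$ are $s_1=3/10$, $s_2=7/10$, giving $s_2-s_1=2/5=10/25>y-x$, and the overshoot from Lemma~\ref{L2}(viii) adds another $\tfrac16\cdot\tfrac15=1/30$. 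Thus your bound delivers only
\[
\max_{[x,y]}f_{2}-w \;\le\; (s_2-s_1)+\tfrac16L \;=\; \tfrac{2}{5}+\tfrac{1}{30}\;=\;\tfrac{13}{30},
\]
which exceeds $y-x=7/25$ by a wide margin, even though the true excursion height is $7/60$. The point is that $\mathsf P_1$ is a sharp constant; any inductive step that loses anything at all fails, and the end-interval discrepancy $|x-s_1|$ (which can be as large as $\tfrac{3}{10}$ of a macro-interval) together with the $\tfrac16$-overshoot is not slack but an actual loss. To make an inductive proof work you would need a strictly stronger hypothesis than $\mathsf P_1$ for $f_n$, at which point you are essentially reproving the structural facts the paper exploits directly.
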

\begin{proof}
Let $x<y\in[0,1]$ and $i$ be arbitrary such that $f_i(x)=f_i(y)$. We show that
\begin{equation}\label{L1}
\max_{x\leq t\leq y}\abs{f_i(x)-f_i(t)}\leq |x-y|.
\end{equation}
Since $f_i$ is piecewise linear, level sets are finite.
We may thus assume that there is no $w\in(x,y)$ such that $f_i(w)=f_i(x)$.
%Otherwise, we prove \eqref{L1} for couples $(x,w)$ and $(w,y)$ instead of $(x,y)$.
Let $z\in(x,y)$ be such that
$\max_{x\leq t\leq y}\abs{f_i(x)-f_i(t)}=\abs{f_i(x)-f_i(z)}$.
The case $f_i(x)=f_i(z)$ is trivial. We may thus assume that either $f_i(x)<f_i(z)$
or $f_i(x)>f_i(z)$.

Assume first $f_i(x)<f_i(z)$. By the construction of $f_i$ we can find minimal
$n\leq i$ and $k<r_n-1$ such that $z\in(a_n^k,a_n^{k+1})\subset(x,y)$ and
$f_i(a_n^k)=f_i(a_n^{k+1})\in(f_i(x),f_i(z)]$. By Lemma \ref{L2}(v) we have
$f_n(a_n^k)=f_n(a_n^{k+1})=f_i(a_n^k)$. We show that $f_n(a_n^{k-1})<f_n(a_n^k)$.

Suppose the contrary: $f_n(a_n^{k-1})>f_n(a_n^k)$. By Lemma \ref{L2}(viii) we have
$f_i(t)\geq f_n(a_n^k)$ for all $t\in(a_n^{k-1},a_n^k)$.
So, $x\notin[a_n^{k-1},a_n^k]$. Thus $a_n^{k-1}\in(x,y)$.
By Lemma \ref{L2}(vii) and (i) we have
$$
  f_n(a_n^{k-1})\geq f_n(a_n^k)+\frac56\abs{a_n^{k-1}-a_n^k}
  \geq f_n(a_n^k)+\frac{5}{18}\abs{a_n^k-a_n^{k+1}}.
$$
By Lemma \ref{L2}(viii) we have $f(z)\leq f_n(a_n^k)+\frac15\abs{a_n^k-a_n^{k+1}}$.
 Thus $f_i(z)<f_i(a_n^{k-1})$, which contradicts that $f_i(t)\leq f_i(z)$ for all $t\in(x,y)$.

Similarly, we have $f_n(a_n^{k+1})>f_n(a_n^{k+2})$. %(figure4).

By the construction we have that there exists $l<r_{n-1}$ such that
$a_{n-1}^l=a_n^{k-2}$, $a_{n-1}^{l+1}=a_n^{k+3})$ and $f_n(a_{n-1}^l)=f_n(a_{n-1}^{l+1})$.
By the minimality of $n$ we have $(x,y)\not\supset(a_{n-1}^l,a_{n-1}^{l+1})$.
Thus $x\in[a_{n-1}^l,a_{n-1}^{l+1}]$ or $y\in[a_{n-1}^l,a_{n-1}^{l+1}]$.
We can assume $x\in[a_{n-1}^l,a_{n-1}^{l+1}]$.
By Lemma \ref{L2}(viii) and $x,z\in[a_{n-1}^l,a_{n-1}^{l+1}]$
we have
$$
  \max_{x\leq t\leq y}\abs{f_i(x)-f_i(t)}=
  \abs{f_i(x)-f_i(z)}\leq\frac15\abs{a_{n-1}^l-a_{n-1}^{l+1}}=
  \abs{a_n^k-a_n^{k+1}}\leq\abs{x-y}.
$$
Now assume $f_i(x)>f_i(z)$. By the construction of $f_i$ and Lemma \ref{L2}(viii)
we can find minimal $n\leq i$ and $k<r_n-1$ such that $a_n^k,a_n^{k+1}\in(x,y)$ and
$f_i(a_n^k)=f_i(a_n^{k+1})=f_i(z)$. By Lemma \ref{L2}(v) we have
$f_n(a_n^k)=f_n(a_n^{k+1})=f_i(z)$. Since $f_i(t)\geq f_n(a_n^k)$ for all $t\in(x,y)$
and Lemma \ref{L2}(ix) we have $f_n(a_n^{k-1}),f_n(a_n^{k+2})>f_n(a_n^k)$.
%(see figure 6).
By the construction there is no $l<r_{n-1}$ such that
$(a_{n-1}^l,a_{n-1}^{l+1})\supset(a_n^{k-1},a_n^{k+2})$.
Thus there are two possible cases:
\begin{enum}
\item There exists $l<r_{n-1}$ such that $a_{n-1}^l=a_n^{k+1}$ and
$f(a_{n-1}^{l-1})=f(a_{n-1}^l)$.%(figure 7)
\item There exists $l<r_{n-1}$ such that $a_{n-1}^l=a_n^k$ and
$f(a_{n-1}^{l+1})=f(a_{n-1}^l)$. %(figure 8)
\end{enum}
We prove only (i), as the case (ii) is similar. By minimality of $n$ we have
$x\in[a_{n-1}^{l-1},a_{n-1}^l]$. Lemma \ref{L2}(viii) yields
\begin{align*}
\max_{x\leq t\leq y}\abs{f_i(x)-f_i(t)}&=
  \abs{f_i(x)-f_n(a_{n-1}^l)} \\
  &\leq\frac15\abs{a_{n-1}^{l-1}-a_{n-1}^l}=
  \abs{a_n^k-a_n^{k+1}}<\abs{x-y}. \qedhere
\end{align*}
\end{proof}
\begin{lem}
The sequence $\{f_n\}$ is uniformly Cauchy.
\end{lem}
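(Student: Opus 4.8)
The plan is to exploit Lemma~\ref{L2}(viii): over each interval determined by $\mc A_n$ it confines all the later functions $f_m$, $m\ge n$, to one thin horizontal band, and the height of this band tends to $0$ uniformly as $n\to\infty$. Uniform smallness of the band over \emph{every} interval of $\mc A_n$ then gives a uniform Cauchy estimate.

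First I would fix $n\in\Nset$ and an arbitrary $x\in[0,1]$, and choose $k<r_n$ with $x\in[a_n^k,a_n^{k+1}]$. Put $M=\max\bigl(f_n(a_n^k),f_n(a_n^{k+1})\bigr)$ and $\mu=\min\bigl(f_n(a_n^k),f_n(a_n^{k+1})\bigr)$. Since $f_n$ is linear on $[a_n^k,a_n^{k+1}]$ we have $\mu\le f_n(x)\le M$, and for every $i\ge1$ Lemma~\ref{L2}(viii) gives
$$
  \mu\le f_{n+i}(x)\le M+\abs{a_n^{k+1}-a_n^k}\sum_{j=1}^i6^{-j}
  \le M+\tfrac15\abs{a_n^{k+1}-a_n^k},
$$
using $\sum_{j=1}^\infty6^{-j}=\tfrac15$. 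Consequently, for all $p,q\ge n$,
$$
  \abs{f_p(x)-f_q(x)}\le(M-\mu)+\tfrac15\abs{a_n^{k+1}-a_n^k}
  =\abs{f_n(a_n^{k+1})-f_n(a_n^k)}+\tfrac15\abs{a_n^{k+1}-a_n^k}.
$$

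The right-hand side depends only on $n$ and $k$, and both terms are already estimated in Lemma~\ref{L2}: part (vi) yields $\abs{f_n(a_n^{k+1})-f_n(a_n^k)}\le\tfrac16\bigl(\tfrac12\bigr)^n$, while parts (ii) and (iii) yield $\abs{a_n^{k+1}-a_n^k}\le\bigl(\tfrac{3}{25}\bigr)^{\lfloor n/2\rfloor}$. Hence
$$
  \sup_{x\in[0,1]}\abs{f_p(x)-f_q(x)}\le
  \tfrac16\bigl(\tfrac12\bigr)^n+\tfrac15\bigl(\tfrac{3}{25}\bigr)^{\lfloor n/2\rfloor}
  \qquad\text{whenever }p,q\ge n,
$$
and the right-hand side tends to $0$ as $n\to\infty$. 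This is precisely the assertion that $\{f_n\}$ is uniformly Cauchy.

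I do not expect a real obstacle here: the substantive work is already contained in Lemma~\ref{L2}, the only essential input being part (viii), which traps the tail of the sequence inside the band $[\mu,\,M+\tfrac15\abs{a_n^{k+1}-a_n^k}]$ over each interval of $\mc A_n$. Everything else is the routine insertion of the geometric decay estimates (ii), (iii) and (vi).
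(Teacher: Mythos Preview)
Your argument is correct, but it takes a different route from the paper's. The paper estimates \emph{consecutive} differences: directly from the construction of $f_{n+1}$ it checks that on each interval $[a_n^k,a_n^{k+1}]$
\[
  \abs{f_{n+1}(x)-f_n(x)}\le\tfrac16\abs{a_n^{k+1}-a_n^k}+\tfrac{3}{10}\abs{f_n(a_n^{k+1})-f_n(a_n^k)},
\]
and then uses Lemma~\ref{L2}(ii),(iii),(vi) to conclude $\sup_x\abs{f_{n+1}(x)-f_n(x)}\le 2^{-n}$, which sums. You instead invoke Lemma~\ref{L2}(viii) once to trap the entire tail $\{f_m\}_{m\ge n}$ in a horizontal band of height $(M-\mu)+\tfrac15\abs{a_n^{k+1}-a_n^k}$ over each interval of $\mc A_n$, and then feed in the same decay estimates. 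Your route is arguably more economical, since the inductive analysis of the construction has already been absorbed into the proof of (viii); the paper's version, on the other hand, is self-contained in the sense that it does not rely on (viii) and reads the bound off the single step $f_n\mapsto f_{n+1}$ directly.
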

\begin{proof}
Fix $n\in\Nset$ and let $k<r_n$. If $a_n^k\leq x\leq a_n^{k+1}$, then
by construction of $f_{n+1}$
$$
  \abs{f_{n+1}(x)-f_n(x)}
  \leq \frac16|a_n^{k+1}-a_n^k|+\frac{3}{10}|f_n(a_n^{k+1})-f_n(a_n^k)|.
$$
Estimate $|a_n^{k+1}-a_n^k|$ using Lemma~\ref{L2}(ii) and (iii) and
$|f_n(a_n^{k+1})-f_n(a_n^k)|$ using Lemma~\ref{L2}(vi)
and combine the estimates to get
$$
  \frac16\abs{a_n^{k+1}-a_n^k}+\frac{3}{10}\abs{f_n(a_n^{k+1})-f_n(a_n^k)}\leq
  2^{-n}.
$$
Thus $\abs{f_{n+1}(x)-f_n(x)}\leq 2^{-n}$, irrespective of the particular $k$.
Since the intervals $[a_n^k,a_n^{k+1}]$, $k<r_n$, cover $[0,1]$, we have
$\abs{f_{n+1}(x)-f_n(x)}\leq 2^{-n}$ for all $x$, which is clearly enough.
\end{proof}
This lemma lets us define $f=\lim_{n\to\infty}f_n$. We claim that thus defined
$f$ is the required function. It is of course continuous.
By Lemma \ref{L3} the functions $f_n$ satisfy condition
$\mathsf P_1$.
It is easy to check that since $f$ is a limit of $f_n$'s, it satisfies
$\mathsf P_1$ as well. We thus have
\begin{prop}\label{vas11}
$f$ is a continuous function satisfying $\mathsf P_1$.
\end{prop}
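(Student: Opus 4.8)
The statement splits into two parts. Continuity of $f$ is immediate: $\{f_n\}$ converges uniformly (the preceding lemma), each $f_n$ is continuous by Lemma~\ref{L2}(x), hence so is $f$. The content lies in verifying $\mathsf P_1$, and the subtlety to anticipate is that for a pair $x<y$ with $f(x)=f(y)$ the approximants typically fail $f_n(x)=f_n(y)$, so Lemma~\ref{L3} cannot be fed the pair $(x,y)$ directly; one must first manufacture, for large $n$, a level pair of $f_n$ lying strictly inside $(x,y)$.

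So fix $x<y$ with $f(x)=f(y)$ and fix $t\in[x,y]$; after possibly replacing $f$ by $-f$ and subtracting a constant---operations that preserve $\mathsf P_1$ for $f$ and for every $f_n$, and preserve uniform convergence---we may assume $f(x)=f(y)=0$ and $f(t)=M\geq0$. If $M\leq|x-y|$ there is nothing to do, so suppose $M>|x-y|$ and choose $\eta\in(0,M-|x-y|)$, $\tau=\eta/10$; note $M>2\tau$. Since $\sup_{[0,1]}|f_n-f|<\tau$ for $n$ large, one then has $f_n(x),f_n(y)<\tau$ and $f_n(t)>M-\tau>\tau$. Applying the intermediate value theorem to $f_n$ on $[x,t]$ and on $[t,y]$ and setting
$$
  x_n=\sup\{s\in[x,t]:f_n(s)=\tau\},\qquad
  y_n=\inf\{s\in[t,y]:f_n(s)=\tau\}
$$
one obtains $x<x_n<t<y_n<y$ with $f_n(x_n)=f_n(y_n)=\tau$.

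Now Lemma~\ref{L3}, applied to $f_n$ and the level pair $x_n<y_n$, gives $|f_n(x_n)-f_n(t)|\leq\max_{x_n\leq s\leq y_n}|f_n(x_n)-f_n(s)|\leq|x_n-y_n|<|x-y|$. Since $f_n(t)>\tau=f_n(x_n)$ this reads $f_n(t)<\tau+|x-y|$, so $M-\tau<\tau+|x-y|$, i.e.\ $M<\eta/5+|x-y|$; but $\eta<M-|x-y|$ forces $M-|x-y|>\eta/5$, a contradiction. Hence $|f(x)-f(t)|=M\leq|x-y|$ for every $t\in[x,y]$, which is precisely $\mathsf P_1$. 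I expect the only genuinely fiddly point to be the bookkeeping that puts $x_n$ and $y_n$ strictly inside $(x,y)$ (so that $|x_n-y_n|<|x-y|$) and keeps them on opposite sides of $t$; this hinges on the strict inequalities $f_n(x),f_n(y)<\tau<f_n(t)$, which hold once $n$ is large and $M>2\tau$, both already secured. Everything else is routine.
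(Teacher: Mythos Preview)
Your argument is correct and is exactly the kind of limiting argument the paper has in mind when it writes ``It is easy to check that since $f$ is a limit of $f_n$'s, it satisfies $\mathsf P_1$ as well''; you have simply supplied the details the paper omits. The only step with any content---manufacturing a genuine level pair $x_n<y_n$ for $f_n$ inside $(x,y)$ via the intermediate value theorem so that Lemma~\ref{L3} applies---is handled cleanly, and the resulting chain of inequalities is sound.
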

It remains to show that $f$ fails to have a derivative at almost all points.
For $n\in\Nset$ define
\begin{align*}
  A_n&=\cl{\{x\in [0,1]:f_n'(x)=0\}},\\
  B_n&= \cl{[0,1]\setminus A_n},\\
  B&=\bigcup_{i\in\Nset}\bigcap_{n\geq i}B_n,\\
  D&=\left\{x\in [0,1];\ \forall n\in\Nset: x\cdot5^n\pmod 1\notin
      \left(\tfrac{1}{5},\tfrac{4}{5}\right)\right\}.
\end{align*}
\begin{lem}\label{vas22}
$\leb(B)=0$.
\end{lem}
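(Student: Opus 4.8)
The plan is to exploit the piecewise-linear form of the $f_n$ together with a $\tfrac25$-contraction built into the construction. Call a partition interval $[a_n^k,a_n^{k+1}]$ of $\mc A_n$ \emph{flat} if $f_n$ is constant on it and \emph{sloped} otherwise, and let $S_n$ be the union of the interiors of the sloped partition intervals at stage $n$. Since $f_n$ is piecewise linear, $\{x:f_n'(x)=0\}$ contains the union of the interiors of the flat intervals and is contained in that union together with the finite set $\mc A_n$; hence $A_n$ equals the (finite) union of the closed flat intervals, $B_n=\cl{[0,1]\setminus A_n}$ equals the union of the closed sloped intervals, and $B_n\setminus S_n\subs\mc A_n$. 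Therefore $B_n\subs S_n\cup\mc A_n$, and as $N:=\bigcup_n\mc A_n$ is countable it suffices to show $\leb\bigl(\bigcap_{n\geq i}S_n\bigr)=0$ for every $i$, since then $\leb(B)\leq\sum_i\leb\bigl(\bigcap_{n\geq i}B_n\bigr)\leq\sum_i\bigl(\leb(N)+\leb\bigl(\bigcap_{n\geq i}S_n\bigr)\bigr)=0$.

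The crucial observation, read off directly from the two cases of the induction step of the construction, is that for every partition interval $J$ of $\mc A_n$ the partition intervals of $\mc A_{n+1}$ contained in $J$ on which $f_{n+1}$ is non-constant have total length exactly $\tfrac25\abs J$: in the flat case the five subintervals $[x_l,x_{l+1}]$ split into three flat ones ($[x_0,x_1]$, $[x_2,x_3]$, $[x_4,x_5]$) and two sloped ones ($[x_1,x_2]$, $[x_3,x_4]$) of length $\tfrac15\abs J$ each, while in the sloped case the subintervals are $[x_0,x_1]$, $[x_1,x_4]$, $[x_4,x_5]$, of which the first and last (length $\tfrac15\abs J$ each) are sloped and the middle one is flat. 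Consequently, if $J$ is a sloped interval of $\mc A_n$ then $S_{n+1}\cap J$ agrees, off the finite set $\mc A_{n+1}$, with the union of the sloped subintervals of $J$, so $\leb(S_{n+1}\cap J)=\tfrac25\abs J$.

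Iterating this yields, for every sloped interval $J$ of $\mc A_n$ and every $m\geq0$, the identity $\leb\bigl(J\cap\bigcap_{l=n}^{n+m}S_l\bigr)=\bigl(\tfrac25\bigr)^m\abs J$, proved by induction on $m$: the case $m=0$ is immediate, and for the inductive step one writes $J\cap\bigcap_{l=n}^{n+m+1}S_l$, up to a null set of partition points, as the disjoint union over the sloped subintervals $J'$ of $J$ of the sets $J'\cap\bigcap_{l=n+1}^{n+m+1}S_l$, applies the inductive hypothesis (at stage $n+1$) to each $J'$, and sums using the previous paragraph. Summing this identity over all sloped intervals $J$ of $\mc A_n$ gives $\leb\bigl(\bigcap_{l=n}^{n+m}S_l\bigr)=\bigl(\tfrac25\bigr)^m\leb(S_n)\leq\bigl(\tfrac25\bigr)^m$, and letting $m\to\infty$ we obtain $\leb\bigl(\bigcap_{l\geq n}S_l\bigr)=0$ for every $n$, which is what remained to be shown.

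I do not expect any genuine obstacle here; the only points needing care are the bookkeeping of which of the newly created subintervals are flat and which are sloped in the two cases of the construction, and the (measure-theoretically harmless) discrepancy between the open sets $S_n$ and the closed sets $B_n$ coming from the finitely many partition points $\mc A_n$.
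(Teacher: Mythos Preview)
Your proof is correct and rests on the same observation as the paper's: once inside a sloped interval, the sloped subintervals at the next stage occupy exactly the outer two fifths, giving a $\tfrac25$-contraction. The paper packages this by writing $B$ as a countable union of affine copies of the self-similar null set $D=\{x\in[0,1]:\forall n,\ 5^nx\pmod 1\notin(\tfrac15,\tfrac45)\}$, whereas you compute the measure directly via the geometric series $\leb\bigl(\bigcap_{l=n}^{n+m}S_l\bigr)\leq(\tfrac25)^m$; these are two presentations of the same argument.
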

\begin{proof}
For every $n$ set
$\mc M_n=\bigl\{i<r_n:f_n'
\bigl(\frac{a^i_n+a^{i+1}_n}{2}\bigr)\neq0\bigr\}$.
It is easy to see that
$$
  B=\bigcup_{n\in\Nset}\bigcup_{i\in\mc M_n}
    \{x\cdot\abs{a^{i+1}_n-a^i_n}+a^i_n:x\in D\}
$$
and since obviously $\leb(D)=0$, we are done.
\end{proof}
\begin{prop}\label{vas33}
\begin{enum}
%\item $\DD(f)\subs B$
\item If $x\notin B$, then $\rd f(x)$ and $\ld f(x)$ do not exist.
\item If $x\in B$, then at least one of the Dini derivatives of $f$ at $x$
is infinite.
%$\infty\in\{\urd f(x),\uld f(x),|\lrd f(x)|,|\lld f(x)|\}$.
\end{enum}
\end{prop}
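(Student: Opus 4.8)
The two parts need quite different arguments; part (ii) is the softer one, so I would do it first.

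\textbf{Part (ii).} Unwinding the definition, $x\in B$ gives some $i_0$ with $x\in B_n=\cl{[0,1]\setminus A_n}$ for all $n\geq i_0$. Since in the construction a flat interval of $f_n$ is replaced by the pattern flat--steep--flat--steep--flat and a steep interval by steep--flat--steep, no two flat and no two steep intervals of $f_n$ are ever adjacent; hence $[0,1]\setminus A_n$ is exactly the union of the \emph{open} steep intervals and $B_n$ the union of the \emph{closed} ones, so $x\in B_n$ means precisely that $x$ lies in a closed steep interval $K_n=[p_n,q_n]$ of $f_n$ (on which $f_n$ is linear and non-constant). One checks that $K_{n+1}\subseteq K_n$ for $n\geq i_0$ and that $K_{n+1}$ is one of the two outer (steep) pieces into which $K_n$ is split; by Lemma~\ref{L2}(vii) the slope $s_n$ of $f_n$ on $K_n$ has $|s_n|\geq 5/6$, and splitting a steep interval multiplies the slope by $5/2$, so $|s_n|\to\infty$, while $|K_n|\to0$ by Lemma~\ref{L2}(ii),(iii). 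Because $f$ agrees with $f_n$ at the nodes $p_n,q_n$ (Lemma~\ref{L2}(v)), the secant slope of $f$ across $K_n$ equals $s_n$. As $p_n\leq x\leq q_n$ and $p_n,q_n\to x$, the bound $|f(q_n)-f(x)|+|f(x)-f(p_n)|\geq |s_n|\,|K_n|$ together with a pigeonhole over the two sides of $x$ produces a subsequence of points approaching $x$ from one fixed side along which the difference quotient of $f$ tends to $+\infty$ or $-\infty$; hence one of the one-sided Dini derivatives of $f$ at $x$ is infinite.

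\textbf{Part (i).} The key input is a self-similarity read off from the construction (using Lemma~\ref{L2}(v)): if $[a,b]$ is a flat interval of $f_n$, of value $v$ and length $\ell=b-a$, then $f(a+\ell t)=v+\ell f(t)$ for all $t\in[0,1]$. Fix $x\notin B$. As in part (ii), $x\notin B_n$ forces $x$ into the interior of a unique flat interval $J_n=[a_n,b_n]$ of $f_n$, and this happens at infinitely many ``escape levels'' $n_1<n_2<\cdots$, with $\ell_j:=b_{n_j}-a_{n_j}\to0$. Writing $t_j:=(x-a_{n_j})/\ell_j\in(0,1)$, the self-similarity gives
\[
  \frac{f(y)-f(x)}{y-x}=\frac{f(t)-f(t_j)}{t-t_j}
  \qquad\text{whenever }y=a_{n_j}+\ell_j t\in(x,b_{n_j}].
\]
I would then check the small dynamical fact $\liminf_j t_j<1$: if $t_j\in(4/5,1)$ the last flat fifth of $J_{n_j}$ is again a flat interval of $f_{n_j+1}$, so $n_{j+1}=n_j+1$ and $t_{j+1}=5t_j-4$, i.e.\ $1-t_{j+1}=5(1-t_j)$; since $1-t_j$ is bounded, $t_j$ cannot tend to $1$, and we may pass to a subsequence with $t_j\to\tau<1$.

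Now suppose, for contradiction, that $\rd f(x)$ exists in $[-\infty,\infty]$. Fix any $t^*\in(\tau,1]$ and set $y_j:=a_{n_j}+\ell_j t^*$; then $y_j>x$ for large $j$ and $y_j-x=\ell_j(t^*-t_j)\to0$, so $y_j\to x+$ and $\dfrac{f(y_j)-f(x)}{y_j-x}\to\rd f(x)$. By the displayed identity this quotient equals $\dfrac{f(t^*)-f(t_j)}{t^*-t_j}$, which tends to $\dfrac{f(t^*)-f(\tau)}{t^*-\tau}$ by continuity of $f$. Hence $\rd f(x)=\dfrac{f(t^*)-f(\tau)}{t^*-\tau}$ is finite and, $t^*\in(\tau,1]$ being arbitrary, $f$ is affine on $[\tau,1]$. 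But $f$ is affine on no non-degenerate interval: every such interval contains a full level-$n$ piece of some $f_n$, and on a flat piece $f$ is a rescaled copy of $f$ on $[0,1]$ (carrying the level-one bump $0,\tfrac16,\tfrac16,0$), while on a steep piece $f$ realizes the midpoint values $\alpha,\tfrac{\alpha+\beta}2,\tfrac{\alpha+\beta}2,\beta$ (Lemma~\ref{L2}(v)) --- neither affine. This contradiction shows $\rd f(x)$ does not exist; the statement for $\ld f(x)$ follows from the mirror symmetry $f(1-t)=f(t)$ of the construction, since then $\ld f(x)=-\rd f(1-x)$ and $1-x\notin B$.

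The step I expect to cost the most care is the bookkeeping in (i) --- checking that the escape intervals genuinely shrink, that $x$ is genuinely interior to each (so the rescalings are legitimate), and the dynamical fact $\liminf_j t_j<1$ --- together with the routine but necessary verification that $f$ is affine (indeed monotone) on no interval. Part (ii)'s only delicate point is confirming the nesting $K_{n+1}\subseteq K_n$ and that $x$ can never slip into the flat middle of $K_n$ without leaving $B$.
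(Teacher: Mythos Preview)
Your argument is correct. Part~(ii) is essentially the paper's own approach: track the nested steep intervals $K_n\ni x$, note that the slope gets multiplied by $5/2$ at each step, and pigeonhole one side to produce an infinite Dini derivative.

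Part~(i), however, takes a genuinely different route from the paper. The paper argues directly: given $x\notin B$ and any $\delta>0$, it locates a flat level-$n$ interval of length $<\delta$ containing $x$, then picks two explicit level-$(n{+}2)$ nodes on one side of $x$ and computes that the two difference quotients to $x$ differ by at least $1/30$. This is a hands-on estimate using Lemma~\ref{L2}(v),(viii) and costs a couple of lines of arithmetic. Your proof instead exploits the exact self-similarity $f(a+\ell t)=v+\ell f(t)$ on every flat interval to \emph{renormalize}: along a subsequence of escape levels with $t_j\to\tau<1$, the hypothetical value $\rd f(x)$ would equal $(f(t^*)-f(\tau))/(t^*-\tau)$ for every $t^*\in(\tau,1]$, forcing $f$ affine on $[\tau,1]$---impossible. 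The paper's approach is shorter and needs no limit extraction or dynamical lemma about $\liminf t_j$; yours is more conceptual, explains \emph{why} the derivative fails (self-similarity incompatible with affinity), and would transfer verbatim to any variant of the construction satisfying the same scaling identity, without recomputing constants like $1/30$. The little expanding-map computation $1-t_{j+1}=5(1-t_j)$ guaranteeing $\liminf t_j\leq 4/5$ is the only extra ingredient you need, and it is sound.
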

\begin{proof}
(i): Let $x\notin B$. We show that $\rd f(x)$
does not exists, the proof for $\ld f(x)$ is similar.
Let $\delta>0$ be arbitrary. Since $x\notin B$ there exist $n\in\Nset$
and $k_i<r_{n+i}$ such that $x\in (a^{k_0}_n,a^{k_0+1}_n)$,
$f_n(a^{k_0}_n)=f_n(a^{k_0+1}_n)$,
$\abs{a^{k_0+1}_n-a^{k_0}_n}<\delta$ and $a^{k_i}_{n+i}=a^{k_0}_n$ for all $i\in\Nset$.
By the construction of functions $f_n$ we have
$f_{n+i}(a^{k_i}_{n+i})=f_{n+i}(a^{k_i+1}_{n+i})$.
Since $x\neq a^{k_0}_n$ there exists $i\in\Nset$ such that
$x\notin [a^{k_i}_{n+i},a^{k_i+1}_{n+i}]$.
We may assume that $x\notin (a^{k_1}_{n+1},a^{k_1+1}_{n+1})$.
By Lemma \ref{L2}(v) and (viii) we have
\begin{align*}
  \left|\tfrac{f(a_{n+2}^{k_2+3})-f(x)}{a_{n+2}^{k_2+3}-x}
  -\tfrac{f(a_{n+2}^{k_2+4})-f(x)}{a_{n+2}^{k_2+4}-x}\right|
    &=\left|\tfrac{f_{n+2}(a_{n+2}^{k_2+3})-f(x)}{a_{n+2}^{k_2+3}-x}
    -\tfrac{f_{n+2}(a_{n+2}^{k_2+4})-f(x)}{a_{n+2}^{k_2+3}-x}\right|\\
    &\geq\left|\tfrac{f_{n+2}(a_{n+2}^{k_2+3})-f(x)}{a_{n+2}^{k_2+3}-x}
    -\tfrac{f_{n+2}(a_{n+2}^{k_2+4})-f(x)}{a_{n+2}^{k_2+3}-x}\right|\\
    &\geq\left|\tfrac{f_{n+2}(a_{n+2}^{k_2+3})-f_{n+2}(a_{n+2}^{k_2+4})}
    {|a^{k_0+1}_n-a^{k_0}_n|}\right|\geq\frac{1}{30}.
\end{align*}
Thus, $\rd f(x)$ does not exists.

(ii): Since $x\in B$ there exist $n\in\Nset$ and $k_i<r_{n+i},\ i\in\Nset$,
such that
\begin{itemyze}
\item $x\in [a^{k_i}_{n+i},a^{k_i+1}_{n+i}]$ for all $i\in\Nset$,
\item $f_n(a^{k_0}_n)=f_n(a^{k_0+1}_n)$,
\item $f_{n+i}(a^{k_i}_{n+i})\neq f_{n+i}(a^{k_i+1}_{n+i})$ for all $i>0$.
\end{itemyze}
By the construction of functions $f_n$ we have, for all $i>0$,
$$
  \left|\frac{f_{n+i}(a^{k_i+1}_{n+i})-f_{n+i}(a^{k_i}_{n+i})}{a^{k_i+1}_{n+i}-a^{k_i}_{n+i}}
  \right|=\frac56\left(\frac{5}{2}\right)^{i-1}.
$$
By Lemma \ref{L2}(v) we have, for all $i>0$,
$$
  \left|\frac{f(a^{k_i+1}_{n+i})-f(x)}{a^{k_i+1}_{n+i}-x}
  \right|\geq\frac56\left(\frac{5}{2}\right)^{i-1}
$$
or
$$
  \left|\frac{f(x)-f(a^{k_i}_{n+i})}{x-a^{k_i}_{n+i}}
  \right|\geq\frac56\left(\frac{5}{2}\right)^{i-1},
  $$
which is clearly enough.
\qedhere
\end{proof}
Theorem~\ref{vas1} now follows from Proposition~\ref{vas11}, Lemma~\ref{vas22}
and Proposition~\ref{vas33}.

%%%%%%%%%%%%%%%%%%%%%%%%%%%%%%%%%%%%%%%%%%%%%%%%%%%%%%%%%%%%%%%%%%%%%%%%%%%%%%%%
%%%%%%%%%%%%%%%%%%%%%%%%%%%%%%%%%%%%%%%%%%%%%%%%%%%%%%%%%%%%%%%%%%%%%%%%%%%%%%%%
\section{$\MM_1$-points}
\label{sec:m1}
%%%%%%%%%%%%%%%%%%%%%%%%%%%%%%%%%%%%%%%%%%%%%%%%%%%%%%%%%%%%%%%%%%%%%%%%%%%%%%%%
%%%%%%%%%%%%%%%%%%%%%%%%%%%%%%%%%%%%%%%%%%%%%%%%%%%%%%%%%%%%%%%%%%%%%%%%%%%%%%%%

It turns out that being an $\MM_1$-point is a particularly simple and strong property:
it is, modulo negligible set, equivalent to differentiability.
We begin with an elementary lemma.
\begin{lem}\label{lemMM1a}
Let $f:I\to\Rset$ be continuous and $y\in I$. Suppose $\eps>0$ is such that
condition~\eqref{pwm} holds with $c=1$.
If there is $x\in(y-\eps,y)$ such that $f(x)>f(y)$, then
$\urd f(y)\leq\frac{y-x}{f(x)-f(y)}$.
\end{lem}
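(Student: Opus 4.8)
The plan is to reduce to a normalized picture and then use the single instance of \eqref{pwm} provided by the point $x$ to control the upper right difference quotient of $f$ at $y$. First I would translate coordinates so that $y=0$ and $f(y)=0$, and write $a=y-x>0$ and $b=f(x)-f(y)=f(x)>0$, so that $\psi_f(x)=(-a,b)$; the assertion to be proved becomes $\urd f(0)\le a/b$.

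Next, for each $z\in(0,\eps)$ I would apply \eqref{pwm} with $c=1$ to the admissible triple $x<0<z$ (legitimate since $x\in(y-\eps,y)$ by hypothesis and $z\in(y,y+\eps)$), square both sides, and expand $a^2+b^2\le(a+z)^2+(b-f(z))^2$. After cancelling $a^2+b^2$ this simplifies to
\begin{equation*}
  f(z)\bigl(2b-f(z)\bigr)\le z\,(2a+z),
\end{equation*}
and this one inequality is the whole engine of the argument.

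Then I would argue by contradiction: if $\urd f(0)>a/b$, choose a sequence $z_n\to0+$ with $f(z_n)/z_n\to L$ for some $L\in(a/b,+\infty]$; since $L>0$ we have $f(z_n)>0$ for all large $n$. Dividing the displayed inequality by $z_n>0$ gives $\tfrac{f(z_n)}{z_n}\bigl(2b-f(z_n)\bigr)\le 2a+z_n$. By continuity of $f$ at $0$ and $f(0)=0$ we have $f(z_n)\to0$, so the left-hand side tends to $2bL$ (or to $+\infty$ when $L=+\infty$) while the right-hand side tends to $2a$; hence $2bL\le2a$, i.e. $L\le a/b$, contradicting the choice of $L$ (and ruling out $L=+\infty$). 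Therefore $\urd f(0)\le a/b$, which is the claim after undoing the normalization, since $a=y-x$ and $b=f(x)-f(y)$.

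The only mild subtlety — not really an obstacle — is bookkeeping the sign of $f(z)$: difference quotients coming from $z$ with $f(z)\le f(y)$ are nonpositive and hence irrelevant to the $\limsup$, which is why passing to the subsequence where $f(z_n)>0$ (forced automatically by $L>0$) loses nothing; and one must note that $2b-f(z_n)\to2b>0$, so dividing by $z_n$ and passing to the limit preserves the direction of the inequality. Beyond elementary continuity, no earlier result is needed.
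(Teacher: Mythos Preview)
Your argument is correct and is the algebraic unwinding of the paper's one-line geometric proof: the inequality $a^{2}+b^{2}\le (a+z)^{2}+(b-f(z))^{2}$ is precisely the statement that $\psi(z)$ lies outside the open disc centered at $\psi(x)$ through $\psi(y)$, and your limit computation extracts exactly the slope of the tangent line to that disc at $\psi(y)$. The paper phrases this as ``$\psi(z)\notin C$, hence $\urd f(y)$ is bounded by the tangent slope $\frac{y-x}{f(x)-f(y)}$'', while you carry out the same bound explicitly; the content is identical.
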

\begin{proof}
Let $C$ be the open disc centered at $\psi(x)$ whose boundary circle passes
through $\psi(x)$. If $z\in(y,y+\eps)$, then $\psi(z)\notin C$.
Therefore $\urd f(y)$ is less than or equal to the slope of the line tangent
to $C$ at $\psi(y)$. This slope is clearly equal to $\frac{y-x}{f(x)-f(y)}$,
as required.
\end{proof}
\begin{coro}
Let $f:I\to\Rset$ be continuous and $y\in I$ an $\MM_1$-point.
If $\lld f(y)<0$, then $\urd f(x)\leq \frac1{\abs{\lld f(y)}}$.
\end{coro}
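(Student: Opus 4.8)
The plan is to deduce this corollary directly from Lemma~\ref{lemMM1a} by a limiting argument. Since $y$ is an $\MM_1$-point, there is $\eps>0$ such that condition~\eqref{pwm} holds with $c=1$. Assume $\lld f(y)<0$ and set $m=\abs{\lld f(y)}>0$; the goal is $\urd f(y)\leq 1/m$. By definition $\lld f(y)=\liminf_{x\to y-}\frac{f(x)-f(y)}{x-y}=-m$, so there is a sequence $x_n\nearrow y$ with $x_n\in(y-\eps,y)$ and $\frac{f(x_n)-f(y)}{x_n-y}\to -m$. Rewriting, $\frac{f(x_n)-f(y)}{y-x_n}\to m>0$; hence for all large $n$ we have $f(x_n)>f(y)$, and Lemma~\ref{lemMM1a} applies to each such $x_n$, giving
\[
  \urd f(y)\leq\frac{y-x_n}{f(x_n)-f(y)}.
\]

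It remains to let $n\to\infty$. The right-hand side is $\bigl(\tfrac{f(x_n)-f(y)}{y-x_n}\bigr)^{-1}$, and the quantity inside the reciprocal tends to $m$; since it is eventually positive and bounded away from $0$, the reciprocal tends to $1/m$. Therefore $\urd f(y)\leq 1/m=1/\abs{\lld f(y)}$, which is exactly the claimed inequality. (Note that if $\urd f(y)=-\infty$ the inequality is trivially true, and the argument above handles the remaining cases uniformly since the bound $\frac{y-x_n}{f(x_n)-f(y)}$ is a finite positive number for each large $n$.)

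I do not expect any genuine obstacle here: the content is entirely carried by Lemma~\ref{lemMM1a}, and what is left is a one-line passage to the limit along a minimizing sequence for the lower-left Dini derivative. The only point requiring a modicum of care is checking that the chosen $x_n$ eventually satisfy both $x_n\in(y-\eps,y)$ and $f(x_n)>f(y)$, so that Lemma~\ref{lemMM1a} is legitimately invoked; both follow immediately from $x_n\nearrow y$ and $\frac{f(x_n)-f(y)}{y-x_n}\to m>0$.
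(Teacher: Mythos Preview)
Your proof is correct and is precisely the intended argument: the paper states this corollary without proof, as an immediate consequence of Lemma~\ref{lemMM1a}, and your limiting argument along a sequence realizing $\lld f(y)$ is exactly how one fills in the details. The only minor remark is that the case $\lld f(y)=-\infty$ (so $m=\infty$, $1/m=0$) is covered by the same computation, since then $\frac{y-x_n}{f(x_n)-f(y)}\to 0$.
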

\begin{thm}\label{thmMM1}
If $f:I\to\Rset$ is continuous, then there is a set $E\subs I$ such that
$\len(\gr|E)=0$ and $\DD(f)\subs\MM_1(f)\subs\DD(f)\cup E$.
In particular, $f$ is differentiable at almost every $\MM_1$-point.
\end{thm}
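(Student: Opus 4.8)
I would prove the two inclusions separately, the first being routine and the second (really only its last step) carrying all the weight.

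\medskip

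\noindent\textbf{The inclusion $\DD(f)\subseteq\MM_1(f)$.}
Let $y\in\DD(f)$; after a translation assume $y=f(y)=0$. If $f'(0)=m$ is finite, fix $\delta>0$ with $1+m^2-2|m|\delta-\delta^2>0$ and then $\eps>0$ with $|f(t)-mt|\le\delta|t|$ for $|t|<\eps$. For $x\in(y-\eps,y)$, $z\in(y,y+\eps)$, writing $f(x)=mx+e_1$, $f(z)=mz+e_2$ and expanding, one finds
$$
  \abs{\psi(x)-\psi(z)}^2-\abs{\psi(x)-\psi(y)}^2
  \geq 2\abs{x}z\bigl(1+m^2-2|m|\delta-\delta^2\bigr)\geq0,
$$
so \eqref{pwm} holds with $c=1$. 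If $f'(0)=\pm\infty$, then on a punctured neighbourhood of $0$ the quantity $f(t)-f(y)$ has opposite signs on the two sides of $0$; hence for $x<y<z$ near $y$ we have $|x-z|\geq|x-y|$ and $|f(x)-f(z)|\geq|f(x)-f(y)|$ directly, which is again \eqref{pwm} with $c=1$. Thus $y\in\MM_1(f)$.

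\medskip

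\noindent\textbf{Reducing the inclusion $\MM_1(f)\subseteq\DD(f)\cup E$.}
By Theorem~\ref{thmMM}(ii) there is $E_0\subseteq I$ with $\len(\gr|E_0)=0$ and $\MM(f)\subseteq\DDapp(f)\cup\KKapp(f)\cup E_0$; since $\MM_1(f)\subseteq\MM(f)$ the same holds for $\MM_1(f)$. No $\MM$-point is an approximate knot point: at an approximate knot point $y$ one has $\lld f(y)\leq\lldapp f(y)=-\infty$ and $\lrdapp f(y)=+\infty$, so Lemma~\ref{MMM}(ii) gives $y\notin\MM(f)$. Hence $\MM_1(f)\subseteq\DDapp(f)\cup E_0$, and it suffices to prove that
$$
  B:=\MM_1(f)\cap\DDapp(f)\setminus\DD(f)
$$
has $\len(\gr|B)=0$, for then $E:=E_0\cup B$ works, and the ``almost every'' statement follows since $\len(\gr|E)=0$ forces $\leb(E)=0$.

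\medskip

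\noindent\textbf{The main step: $\len(\gr|B)=0$.}
Fix $y\in B$ and put $m=\dapp f(y)$, which is finite because $y\notin\KKapp(f)$. Since $f'(y)$ does not exist, some Dini derivative at $y$ differs from $m$; reversing either coordinate axis sends $f$ to a function that is isometric to $\gr$, for which $y$ is still an $\MM_1$- and $\DDapp$-point at which the derivative fails, and permutes the four Dini derivatives, so we may assume $\urd f(y)>m$. Applying \eqref{pwm} with $c=1$ along a sequence $z_n\downarrow y$ realising $\urd f(y)$, dividing by $z_n-y$ and letting $n\to\infty$, yields, for \emph{every} $x$ in a fixed left neighbourhood of $y$, the one-sided inequality $(x-y)+(f(x)-f(y))\urd f(y)\leq 0$ when $\urd f(y)$ is finite, while if $\urd f(y)=+\infty$ Lemma~\ref{lemMM1a} applied on the left gives the stronger $f(x)\leq f(y)$ there (and a symmetric one-sided bound holds on the right side, using $\dapp f(y)=m$). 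One now combines this with the fact that, by $\dapp f(y)=m$, the difference quotient $\tfrac{f(x)-f(y)}{x-y}$ is $m$-close on a set of density $1$ at $y$, to conclude that every such $y$ lies either in the (countable) set of points of strict one-sided extremum of $f$, or in the set $E^+$ of Lemma~\ref{lemE} or its left-sided analogue; Lemma~\ref{lemE} (and its mirror) then bounds $\len(\gr|(\,\cdot\,))$ by a constant times the Lebesgue measure, and the one-sided estimate shows that this measure is $0$ — on its complement $f$ would be locally monotone near $y$, hence ordinarily differentiable at almost every such point, contradicting $y\in B$. The delicate part — and the step I expect to be the real work — is exactly this last passage: converting the one-sided inequality into a covering of $B$ by a $\gr$-length-null set; everything preceding it is bookkeeping with the earlier lemmas.
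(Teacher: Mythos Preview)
Your first inclusion is fine and is essentially the paper's argument written in coordinates rather than geometrically (the paper just observes that the angle at $\psi(y)$ between $\psi(x)-\psi(y)$ and $\psi(z)-\psi(y)$ is obtuse).

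The second inclusion, however, has a genuine gap. The detour through the \emph{approximate} Denjoy--Young--Saks theorem via Theorem~\ref{thmMM}(ii) is unnecessary and manufactures the problematic set $B=\MM_1(f)\cap\DDapp(f)\setminus\DD(f)$ that you then cannot handle. Your sketch for $\len(\gr|B)=0$ does not go through: the one-sided inequality $(x-y)+(f(x)-f(y))\,\urd f(y)\leq 0$ is correctly derived, but from it the claim that $y$ must be a strict one-sided extremum or lie in $E^+$ does not follow, and the closing sentence (``on its complement $f$ would be locally monotone near $y$, hence ordinarily differentiable at almost every such point'') conflates a pointwise condition with a measure statement and does not yield $\leb(B)=0$. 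Even granting $B\subs E^+$, Lemma~\ref{lemE} only gives $\len(\gr|B)\leq 4\,\leb(B)$, so you would still need $\leb(B)=0$, which you have not shown.

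The paper avoids all of this by applying the \emph{classical} Denjoy--Young--Saks theorem directly: off a set $E$ with $\len(\gr|E)=0$, every point is in $\DD(f)$, or a knot point, or (up to axis reflection) satisfies $\urd f(y)=-\lld f(y)=\infty$ with $\uld f(y)=\lrd f(y)$ finite. In each of the three non-$\DD$ cases one has $\lld f(y)=-\infty$ and $\urd f(y)=\infty$; but the Corollary to Lemma~\ref{lemMM1a} says that for an $\MM_1$-point, $\lld f(y)<0$ forces $\urd f(y)\leq 1/|\lld f(y)|$, hence $\lld f(y)=-\infty$ gives $\urd f(y)\leq 0$, a contradiction. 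So $\MM_1(f)\setminus E\subs\DD(f)$ in one stroke, and approximate derivatives never enter.
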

\begin{proof}
If $f$ has a derivative, finite or infinite, at $y$, then there is obviously
$\eps>0$ such that if $y-\eps\leq x<y<z\leq y+\eps$, then
the angle spanned by the vectors $\psi(x)-\psi(y)$ and $\psi(z)-\psi(y)$
is obtuse and consequently
\begin{equation}\label{Hr999x}
  \abs{\psi(y)-\psi(x)}\leq\abs{\psi(z)-\psi(x)},
\end{equation}
which is nothing but condition~\eqref{pwm} with $c=1$.
Hence $y$ is an $\MM_1$-point.

To prove the latter inclusion we employ the famous Denjoy--Young--Saks Theorem,
cf.~\cite[IX(4.2)]{MR0167578}: \emph{There is a set $E\subs I$ such that
$\len(\gr|E)=0$ and for every point $x\notin E$ one of the following
cases occurs:
\emph{(a)} $f'(x)$ exists,
\emph{(b)} $x$ is a knot point,
\emph{(c)} $\urd f(x)=-\lld f(x)=\infty$ and $\uld f(x)=\lrd f(x)$ are finite,
\emph{(d)} $\uld f(x)=-\lrd f(x)=\infty$ and $\urd f(x)=\lld f(x)$ are finite.}

Suppose for contrary that there is $y\in\MM_1(f)\setminus(\DD\cup E)$.
Then one of cases (b), (c), (d) occurs.
Since (d) obtains from (c) by reversing the $y$-axis,
we only have to consider (b) and (c). In either case, $\lld f(y)=-\infty$ and
$\urd f(y)=\infty$. The above corollary yields $\urd f(x)\leq0$: a contradiction.
\end{proof}

The set $\gr|\DD(f)$ is, by this theorem and Proposition~\ref{meager2}(iii),
a countable union of closed $1$-monotone sets. An easy symmetry argument gives
a bit more:
\begin{coro}\label{Hr999}
If $f:I\to\Rset$ is continuous, then $\gr|\DD(f)$ admits a
countable cover by symmetrically $1$-monotone sets.
\end{coro}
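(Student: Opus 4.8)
The plan is to upgrade the conclusion of Theorem~\ref{thmMM1}, which gives a countable cover of $\gr|\DD(f)$ by closed \emph{one-sidedly} $1$-monotone sets (via Proposition~\ref{meager2}(iii)), to a countable cover by \emph{symmetrically} $1$-monotone sets. The point is that a set which is $1$-monotone with respect to an order need not be symmetrically $1$-monotone: condition~\eqref{L1,1} may hold while~\eqref{L1,2} fails, and passing to symmetric monotonicity with constant $1$ cannot in general be achieved just by enlarging the constant. So one must actually exploit the special structure of $\gr|\DD(f)$, namely that at a point of differentiability \emph{both} inequalities~\eqref{Hr999x}-type estimates are available.

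First I would recall, from the first paragraph of the proof of Theorem~\ref{thmMM1}, that if $f'(y)$ exists (finite or infinite) then there is $\eps>0$ such that for all $x,z$ with $y-\eps\leq x<y<z\leq y+\eps$ the angle at $\psi(y)$ between $\psi(x)-\psi(y)$ and $\psi(z)-\psi(y)$ is obtuse. An obtuse angle at $\psi(y)$ forces \emph{both} $\abs{\psi(y)-\psi(x)}\leq\abs{\psi(z)-\psi(x)}$ \emph{and} $\abs{\psi(y)-\psi(z)}\leq\abs{\psi(z)-\psi(x)}$ (the side opposite the obtuse angle is the longest). Thus at every $y\in\DD(f)$ there is $\eps(y)>0$ such that \emph{both}~\eqref{L1,1} and~\eqref{L1,2} hold with $c=1$ for all triples $x<y<z$ inside $(y-\eps(y),y+\eps(y))$. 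In other words, $\gr$ is \emph{symmetrically} $1$-monotone at each point of $\DD(f)$ in the local sense of condition~\eqref{pwm} (with both variants).

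Next I would convert this pointwise information into a countable cover. For $m\in\Nset$ let
$$
  D_m=\Bigl\{y\in\DD(f): \text{both \eqref{L1,1} and \eqref{L1,2} with $c=1$ hold for all $x<y<z$ in }(y-\tfrac1m,y+\tfrac1m)\Bigr\},
$$
so that $\DD(f)=\bigcup_m D_m$. Now partition each $D_m$ into pieces of diameter less than $\tfrac1m$: for $j\in\Zset$ put $D_{m,j}=D_m\cap[\tfrac jm,\tfrac{j+1}m]$. I claim $\gr|D_{m,j}$ is symmetrically $1$-monotone, witnessed by the natural order inherited from $I$. Indeed if $x<y<z$ all lie in $D_{m,j}$ then $x,z\in[\tfrac jm,\tfrac{j+1}m]$, so $\abs{x-z}\leq\tfrac1m$, hence $x,z\in(y-\tfrac1m,y+\tfrac1m)$, and by definition of $D_m$ both~\eqref{L1,1} and~\eqref{L1,2} hold with $c=1$ for this triple. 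By Lemma~(ii) (the lemma on page characterizing symmetric $c$-monotonicity of graphs via~\eqref{L1,1} and~\eqref{L1,2}), $\gr|D_{m,j}$ is symmetrically $1$-monotone. Since $\gr|\DD(f)=\bigcup_{m}\bigcup_{j}\gr|D_{m,j}$ is a countable union, this proves the corollary. Optionally one can replace $D_{m,j}$ by its closure to make the pieces closed, using that the closure of a symmetrically $1$-monotone set is symmetrically $1$-monotone (the relevant closure fact for monotone sets is cited in the excerpt from~\cite{NZ2}), though this is not needed for the statement as phrased.

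The only genuinely delicate point is the geometric observation that differentiability yields the \emph{symmetric} (both-sided) $1$-monotonicity locally, not merely one of the two inequalities; once that obtuse-angle argument is in hand, everything else is the routine "break into small pieces'' maneuver already used repeatedly in the paper. I do not expect any serious obstacle beyond making sure the $\eps(y)$ from the differentiability argument controls both inequalities simultaneously, which it does because an obtuse angle at a vertex of a triangle makes the opposite side strictly longest.
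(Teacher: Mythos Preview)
Your proposal is correct and is precisely the ``easy symmetry argument'' the paper alludes to: the obtuse-angle observation in the proof of Theorem~\ref{thmMM1} already gives both inequalities~\eqref{L1,1} and~\eqref{L1,2} with $c=1$ at each point of $\DD(f)$, and the rest is the standard localization into pieces of small diameter (as in Proposition~\ref{meager2}(iii)). The only cosmetic quibbles are that your reference to the unnamed Lemma on symmetric $c$-monotonicity of graphs is not quite apt (that lemma is about the full graph on an interval, whereas here you are simply verifying condition~\eqref{basic2} directly on the subset $\gr|D_{m,j}$ with the inherited order), and that your pieces $[\tfrac jm,\tfrac{j+1}m]$ have diameter $\leq\tfrac1m$ rather than $<\tfrac1m$; both are harmless.
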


$1$-monotone graphs behave particularly nice:
\begin{thm}\label{1-mono}
If $I$ is compact and $f:I\to\Rset$ is continuous with a $1$-monotone graph
, then $f$ is of bounded variation.
\end{thm}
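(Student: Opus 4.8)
The plan is to show that $1$-monotonicity forces a strong rigidity on the graph: locally, $f$ is ``almost monotone'' in the sense that the only way the graph can turn is governed by a circle (the disc from Lemma~\ref{lemMM1a}), and these turns cannot accumulate too much total variation. First I would use the hypothesis together with Lemma~\ref{L4}(i): since $\gr$ is $1$-monotone, $f$ satisfies $\mathsf P_1$, so $\max_{x\le t\le y}\abs{f(x)-f(t)}\le\abs{x-y}$ whenever $f(x)=f(y)$. This already controls oscillation between equal-level points: on each ``bump'' (a maximal interval between two points of equal value on which $f$ does not return to that value), the height is bounded by the length of the base. The idea is to decompose $I$ into maximal monotone pieces of $f$ together with a controlled remainder, and bound the variation on each.

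The key geometric step is Lemma~\ref{lemMM1a} (and its corollary): at any point $y$, if there is a nearby point to the left with $f(x)>f(y)$, then $\urd f(y)\le\frac{y-x}{f(x)-f(y)}$, and symmetrically. I would exploit this as follows. Suppose $f$ is not of bounded variation on the compact interval $I$. Then there are points $x_0<x_1<\dots<x_N$ with $\sum\abs{f(x_i)-f(x_{i-1})}$ arbitrarily large; by passing to consecutive local extrema we may assume the $f(x_i)$ alternate between local maxima and local minima. Between a local max at $x_i$ and the adjacent local min at $x_{i+1}$, pick an intermediate point and apply Lemma~\ref{lemMM1a} at $x_i$ (using the min on the far side) to get that the ascent into $x_i$ has bounded slope relative to the drop; iterating the circle/disc estimate along the chain of extrema shows each oscillation of height $h$ must be ``paid for'' by horizontal length comparable to $h$ (the tangent line to the relevant disc has slope $\frac{\Delta x}{\Delta y}$, so $\Delta y$ large relative to $\Delta x$ is impossible near a point that sees a higher point on one side and a lower point on the other). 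Summing the horizontal lengths, which are disjoint and lie in $I$, bounds $\sum h_i$ by a constant times $\abs{I}$, contradicting unboundedness.

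More cleanly, I would phrase the estimate as: for any $a<b$ in $I$ with $f$ having a local max at some $c\in(a,b)$ and $f(a),f(b)$ below $f(c)$, the quantity $(f(c)-f(a))+(f(c)-f(b))$ is at most a constant multiple of $b-a$, using that the graph avoids the two discs anchored at $\psi(a)$ and $\psi(b)$ and the disc condition pins the slopes. An induction on the number of extrema, partitioning $[a,b]$ at the extrema and applying this to each sub-triple, then gives $\mathrm{Var}(f;[a,b])\le K\abs{b-a}$ for an absolute constant $K$, hence $\mathrm{Var}(f;I)\le K\len(I)<\infty$ since $I$ is compact. The main obstacle I anticipate is organizing the bookkeeping of the discs and the chain of extrema so that the horizontal ``costs'' genuinely come out disjoint (or finitely-overlapping) in $I$ — one must be careful that the disc anchored at one extremum, used to bound the slope at the next, does not get double-counted; handling this likely needs the two-sided nature of the $\MM_1$ condition (each interior extremum sees the graph on both sides) plus a careful choice of which anchor point to use at each step, exactly in the spirit of Lemma~\ref{lemMM1a}.
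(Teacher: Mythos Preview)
Your target estimate $\mathrm{Var}(f;[a,b])\le K\abs{b-a}$ with an absolute constant $K$ is false, and this already kills the induction at its base case. Every nondecreasing function has a $1$-monotone graph; for $f(x)=\sqrt x$ on $[0,\eps]$ one gets $\mathrm{Var}(f;[0,\eps])/\eps=\eps^{-1/2}\to\infty$. So a $1$-monotone graph does not force variation to be linear in interval length; the steep monotone stretches carry unbounded variation-per-length and are precisely what your extrema-based bookkeeping cannot see (there are no interior extrema there). As a related warning, the ``height $\le$ width'' information coming from $\mathsf P_1$ is too weak on its own: the function of Theorem~\ref{vas1} satisfies $\mathsf P_1$ (Proposition~\ref{vas11}) yet is almost nowhere differentiable, hence not of bounded variation. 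Nested bumps can accumulate infinite variation under $\mathsf P_1$, so summing bump heights against disjoint base widths cannot close the argument even if you drop the linear-in-$\abs{b-a}$ goal.

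The paper's proof avoids this by a different decomposition. It sets $A=\{x:\forall y<x,\ f(y)-f(x)<x-y\}$ and the symmetric set $B$; on $A$ the map $x\mapsto f(x)+x$ is increasing, so $f\rest A$ agrees with a function of bounded variation on all of $I$, and similarly for $B$. On the remainder $I\setminus(A\cup B)$, Lemma~\ref{lemMM1a} (applied globally, not at extrema) gives $\abs{\urd f}\le 1$, and the Saks length formula then yields $\len\bigl(\gr|I\setminus(A\cup B)\bigr)\le\sqrt 2\,\leb(I)$. Altogether $\len(\gr)<\infty$, whence $f$ is of bounded variation. The point is that the steep part of $f$ is absorbed into an auxiliary \emph{monotone} function rather than bounded linearly in $\abs{b-a}$; your scheme tries to push all of $f$ into the ``slope bounded'' regime, which the $\sqrt x$ example shows is impossible.
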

\begin{proof}
Set $A=\{x\in I: \forall y\in[0,x)\ f(y)-f(x)<x-y\}$ and let $g(x)=f(x)+x$.
Obviously $A=\{x\in I:\forall y\in[0,x)\ g(y)<g(x)\}$,
hence $g$ is increasing on $A$.
By~\cite[VII(4.1)]{MR0167578} there is a non-decreasing extension
$g^*:I\to\Rset$ of $g$.
If follows that $f^*(x)=g^*(x)-x$ is of bounded variation on $[0,1]$
and clearly $f^*(x)=f(x)$ for all $x\in A$.
Therefore $\len(\gr|A)\leq\len(\gr^*)<\infty$.
The same argument shows that letting
$B=\{x\in I: \forall y\in[0,x)\ f(x)-f(y)<x-y\}$ we have
$\len(\gr|B)\leq\len(\gr^*)<\infty$.

Now suppose $x\notin A$, i.e.~there is $y<x$ such that $f(y)-f(x)\geq x-y$.
Lemma~\ref{lemMM1a} yields $\urd f(x)\leq1$. The same argument shows that if
$x\notin B$, then $\lrd f(x)\geq-1$.
In summary, if $x\notin A\cup B$, then $\abs{\urd f(x)}\leq1$.
By the remark following~\cite[IX(4.6)]{MR0167578}
$$
  \len(\gr|I\setminus(A\cup B))\leq
  \int_{I\setminus(A\cup B)}\sqrt{1+(\urd f(x))^2}\leq
  \sqrt2\leb(I)<\infty.
$$
Altogether
$\len(\gr)\leq\len(\gr|A)+\len(\gr|B)+\len(\gr|I\setminus(A\cup B))<\infty$.
In particular, $f$ is of bounded variation.
\end{proof}
Since every nondecreasing function has a $1$-monotone graph, we have the
following characterization of bounded variation.
\begin{coro}\label{BVchar}
A continuous function $f:[0,1]\to\Rset$ is of bounded variation if and only if
it is a sum of two continuous functions with $1$-monotone graphs.
\end{coro}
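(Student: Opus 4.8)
The plan is to establish the two implications separately: the forward one is immediate from Theorem~\ref{1-mono}, and the converse rests on the classical Jordan decomposition of a function of bounded variation.

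For the direction asserting that a sum of two continuous functions with $1$-monotone graphs has bounded variation, I would simply invoke Theorem~\ref{1-mono}. If $f=g+h$ with $g,h$ continuous and the graphs of $g$ and $h$ both $1$-monotone, then, since $[0,1]$ is compact, each of $g$ and $h$ is of bounded variation, and hence so is $f=g+h$.

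For the converse, suppose $f$ is continuous and of bounded variation. I would let $g(x)$ denote the variation of $f$ on $[0,x]$ and put $h=g-f$. Then $g$ is non-decreasing, and it is continuous precisely because $f$ is continuous (this is where continuity of $f$ enters). Moreover $h$ is non-decreasing, since for $x<y$ the quantity $h(y)-h(x)$ equals the variation of $f$ on $[x,y]$ minus $f(y)-f(x)$, which is nonnegative; and $h$ is continuous as a difference of continuous functions. Thus $f=g+(-h)$ writes $f$ as a sum of the non-decreasing continuous function $g$ and the non-increasing continuous function $-h$.

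It remains to observe that the graph of a monotone continuous function --- non-decreasing or non-increasing --- is $1$-monotone. For a non-decreasing function this is exactly the remark preceding the statement, and it is immediate from~\eqref{L1,1}: for $x<y<z$ one has simultaneously $\abs{x-y}\leq\abs{x-z}$ and $\abs{g(x)-g(y)}\leq\abs{g(x)-g(z)}$, whence $\abs{(x,g(x))-(y,g(y))}\leq\abs{(x,g(x))-(z,g(z))}$. The identical computation applies to the non-increasing function $-h$, since $-h(x)\geq-h(y)\geq-h(z)$ again forces the inequality between the differences of the second coordinates; alternatively, one notes that $1$-monotonicity is preserved under reflecting a graph across the $x$-axis. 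I do not foresee any genuine obstacle here; the only point worth flagging is that the Jordan decomposition of a continuous function of bounded variation can be taken with continuous monotone summands, which is the reason the continuity of the variation function is invoked.
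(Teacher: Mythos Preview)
Your proposal is correct and follows exactly the approach the paper indicates: one direction is Theorem~\ref{1-mono}, and the other is the Jordan decomposition together with the observation (stated just before the corollary) that every nondecreasing---hence by reflection every monotone---continuous function has a $1$-monotone graph. You have simply spelled out the details that the paper leaves implicit.
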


%%
%For each $n$ put
%$$
%  D_n=\{y\in\DD(f):\eps_y>2^{-n}\}
%$$
%and cover each $D_n$ with finitely many intervals $I_{nk}$ of length $2^{-n}$.
%If $x<y<z$ are in $D_n\cap I_{nk}$, then clearly $y-\eps_y\leq x<y<z\leq y+\eps_y$
%and thus \eqref{Hr999x} holds. In summary, $\gr|(D_n\cap I_{nk})$ is symmetrically
%$1$-monotone for each $n$ and $k$ and the family $\{\gr|(D_n\cap I_{nk})\}$
%covers $\gr|\DD(f)$.
%%\end{proof}
%%

%%%%%%%%%%%%%%%%%%%%%%%%%%%%%%%%%%%%%%%%%%%%%%%%%%%%%%%%%%%%%%%%%%%%%%%%%%%%%%%%
%%%%%%%%%%%%%%%%%%%%%%%%%%%%%%%%%%%%%%%%%%%%%%%%%%%%%%%%%%%%%%%%%%%%%%%%%%%%%%%%
\section{An absolutely continuous function with a non-\si monotone graph}
\label{sec:MICHAL}
%%%%%%%%%%%%%%%%%%%%%%%%%%%%%%%%%%%%%%%%%%%%%%%%%%%%%%%%%%%%%%%%%%%%%%%%%%%%%%%%
%%%%%%%%%%%%%%%%%%%%%%%%%%%%%%%%%%%%%%%%%%%%%%%%%%%%%%%%%%%%%%%%%%%%%%%%%%%%%%%%

If $f:I\to\Rset$ is absolutely continuous, then it is differentiable almost
everywhere and, moreover, the set $\gr|I\setminus\DD(f)$ is of length zero.
Thus Corollary~\ref{Hr999} yields:
\begin{coro}
If $f:I\to\Rset$ is absolutely continuous, then there is a countable family
$\{M_n\}$ of symmetrically $1$-monotone sets such that\label{Hr1000}
$$
  \len\Bigl(\gr\setminus\bigcup\nolimits_{n\in\Nset}M_n\Bigr)=0.
$$
\end{coro}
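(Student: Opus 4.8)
The plan is to split the graph as $\gr=\bigl(\gr|\DD(f)\bigr)\cup\bigl(\gr|(I\setminus\DD(f))\bigr)$ and treat the two pieces by entirely different means: the part over the differentiability set is handled by the already-established Corollary~\ref{Hr999}, and the part over its complement is shown to have linear measure zero, at which point the covering family for the first piece automatically works for the whole graph up to a null set.

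For the piece over $\DD(f)$, I would simply quote Corollary~\ref{Hr999}: since $f$ is in particular continuous, $\gr|\DD(f)$ admits a countable cover by symmetrically $1$-monotone sets; call this family $\{M_n\}_{n\in\Nset}$. Absolute continuity plays no role here. For the piece over $I\setminus\DD(f)$, I would recall the two standard facts about an absolutely continuous $f:I\to\Rset$: it is differentiable $\leb$-almost everywhere, so $\leb(I\setminus\DD(f))=0$, and $f'\in L^1(I)$. Consequently the natural parametrization $\psi_f(x)=(x,f(x))$ is an absolutely continuous curve, and for every measurable $E\subs I$ one has the arc-length estimate $\len(\gr|E)=\len(\psi_f[E])\le\int_E\sqrt{1+(f'(x))^2}\,d\leb$; this is obtained by covering $E$ by an open set on which $\int\sqrt{1+(f')^2}$ is arbitrarily small, which is possible by absolute continuity of the Lebesgue integral together with $\leb(E)=0$. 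Applying this with $E=I\setminus\DD(f)$ gives $\len\bigl(\gr|(I\setminus\DD(f))\bigr)=0$ — exactly the assertion already made in the opening paragraph of the section, so in the write-up it may simply be cited.

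Combining the two pieces: $\gr\setminus\bigcup_{n\in\Nset}M_n\subs\gr\setminus\gr|\DD(f)=\gr|(I\setminus\DD(f))$, and the right-hand side has Hausdorff length zero, so $\len\bigl(\gr\setminus\bigcup_{n\in\Nset}M_n\bigr)=0$, as required.

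The only mildly delicate point is the arc-length inequality $\len(\psi_f[E])\le\int_E\sqrt{1+(f')^2}\,d\leb$ for a $\leb$-null set $E$; but this is classical (it is the quantitative form of the Luzin $(N)$ property for absolutely continuous functions, and a one-dimensional analogue of the estimate already invoked in the proof of Theorem~\ref{1-mono} via the remark following~\cite[IX(4.6)]{MR0167578}). So there is no genuine obstacle: once that length-zero fact is in hand, the corollary is an immediate consequence of Corollary~\ref{Hr999}.
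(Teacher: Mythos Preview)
Your proposal is correct and follows exactly the paper's approach: the paper states in the sentence preceding the corollary that an absolutely continuous $f$ is differentiable almost everywhere with $\len(\gr|I\setminus\DD(f))=0$, and then invokes Corollary~\ref{Hr999}. You simply spell out the length-zero claim via the arc-length integral, which the paper takes as known.
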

We want to show that this fact cannot be sharpened by providing
an example of an absolutely continuous function whose graph is
not \si monotone.

Recall the notion of strong porosity, as defined in~\cite{HrZi}.
A set $X\subs\Rset^2$ is termed \emph{strongly porous} if there is $p>0$
such that for any $x\in\Rset^2$ and any $r\in(0,\diam X)$
there is $y\in\Rset^2$ such that $B(y,pr)\subs B(x,r)\setminus X$.
The constant $p$ is termed a \emph{porosity constant} of $X$.
As proved in~\cite[Theorem 4.2]{HrZi}, every monotone set in $\Rset^2$ is
strongly porous. More information on porosity properties of monotone sets in $\Rset^n$
can be found in~\cite{ZinFrac}.

M.~Zelen\' y~\cite{2177418} found an example of an absolutely continuous function
whose graph is not \si porous%
\footnote{See~\cite{2177418} or~\cite{943561,2201041} for the definition
of \si porous.},
and since a countable union of strongly porous sets is \si porous,
we have, in view of~\cite[Theorem 4.2]{HrZi} mentioned above,
the following theorem.
\begin{thm}
There is an absolutely continuous function on $[0,1]$ whose graph is not \si monotone.
\end{thm}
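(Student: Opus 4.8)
The plan is to deduce the theorem from M.~Zelen\'y's example, cited above, combined with the porosity of monotone sets. First I would record the key implication: every \si monotone subset of $\Rset^2$ is \si porous. Indeed, such a set is a countable union of monotone sets, each of which is strongly porous by~\cite[Theorem 4.2]{HrZi}, and a countable union of strongly porous sets is \si porous by definition. Consequently, it suffices to produce an absolutely continuous $f:[0,1]\to\Rset$ whose graph is not \si porous.

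Second, I would take for $f$ precisely the absolutely continuous function constructed by Zelen\'y in~\cite{2177418}, whose graph is not \si porous. If the graph $\gr$ of this $f$ were \si monotone, then by the first step $\gr$ would be \si porous, contradicting Zelen\'y's result. Hence $\gr$ is not \si monotone, which is what we want.

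The mathematical content of the argument resides entirely in the two external inputs --- Zelen\'y's construction and~\cite[Theorem 4.2]{HrZi} --- so the proof itself is a one-line deduction. The only point that requires attention, and the step I would double-check, is the compatibility of the porosity notions involved: one must verify that ``strongly porous'' in the sense defined above does imply porosity at every point and every scale with a uniform constant, which is exactly the hypothesis making a countable union \si porous in the sense used by Zelen\'y. This reconciliation is immediate from the definitions (strong porosity with constant $p$ gives lower porosity at least $p$ at every point of the set and at every scale below its diameter), so no further work is needed.
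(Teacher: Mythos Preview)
Your proposal is correct and follows exactly the same route as the paper: invoke Zelen\'y's absolutely continuous function with non-\si porous graph, use \cite[Theorem~4.2]{HrZi} to conclude that every \si monotone planar set is \si porous, and deduce that Zelen\'y's graph cannot be \si monotone. The paper's argument is the same one-line deduction, with the compatibility of porosity notions taken for granted.
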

Zelen\' y's example is rather involved. We provide another example
that is much simpler and moreover it exhibits that the implication
monotone $\Rightarrow$ strongly porous cannot be reversed even for graphs.

\begin{thm}
There is an absolutely continuous function $f:[0,1]\to\Rset$
whose graph is strongly porous, but every monotone subset of $\gr$ is nowhere
dense. In particular, $\gr$ is not \si monotone.\label{Hrrrr}
\end{thm}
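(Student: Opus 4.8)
The plan is to build $f$ by hand as a uniformly convergent series $f=\sum_{n}\phi_{n}$ of continuous functions whose supports are pairwise disjoint closed subintervals $K_{n}\subs[0,1]$. Each $\phi_{n}$ will be a \emph{spike tower} over a centre $c_{n}\in\inter K_{n}$: on each side of $c_{n}$ one places, for $k=1,2,\dots$, a small tent whose base is at distance proportional to $\rho^{k}$ from $c_{n}$, of width proportional to $\sigma^{k}$ and height proportional to $\tau^{k}$, where $0<\sigma<\rho<\tau<1$ are fixed once and for all. Then $(\tau/\sigma)^{k}\to\infty$ (every tent is steep, with slope tending to $\infty$), $(\tau/\rho)^{k}\to\infty$ (the two apices of the $k$-th pair, which we give equal height, approach $c_{n}$ with slopes tending to $\infty$), and $\sum_{k}\tau^{k}<\infty$. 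The scaling of $\phi_{n}$ is chosen so that $\operatorname{TV}(\phi_{n})\le2^{-n}$, and the $K_{n}$ are chosen so small (relative to the finitely many previously chosen ones) that they are pairwise disjoint while the centres $\{c_{n}\}$ are dense in $[0,1]$; it is convenient to place the $K_{n}$ inside the complementary intervals of a fixed self-similar porous Cantor set, which also records the scale bookkeeping used in the porosity estimate.

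\emph{Absolute continuity.} The partial sums $f_{N}=\sum_{n\le N}\phi_{n}$ are absolutely continuous and converge to $f$ uniformly, since $\norm{\phi_{n}}_{\infty}\le\operatorname{TV}(\phi_{n})\le2^{-n}$. As $\sum_{n}\norm{\phi_{n}'}_{1}=\sum_{n}\operatorname{TV}(\phi_{n})<\infty$, the derivatives converge in $L^{1}$ to $g:=\sum_{n}\phi_{n}'$, so passing to the limit in $f_{N}(x)=f_{N}(0)+\int_{0}^{x}f_{N}'$ gives $f(x)=f(0)+\int_{0}^{x}g$; hence $f$ is absolutely continuous.

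\emph{No monotone subset is somewhere dense.} No $c_{n}$ is an $\MM$-point: since $c_{n}$ lies in no tent support, $f(c_{n})=0$; and given $\eps>0$ and $c\ge1$, pick $k$ so large that the $k$-th pair of tents of $\phi_{n}$ lies in $(c_{n}-\eps,c_{n}+\eps)$, and let $x<c_{n}<z$ be the apices of its left and right tent. Because the tents have disjoint supports and no other $\phi_{i}$ meets $K_{n}$, $f(x)=f(z)$ is the common height $h$ of that pair, while $\abs{f(x)-f(c_{n})}=h$ and $h/\abs{z-x}$ is proportional to $(\tau/\rho)^{k}$. Thus~\eqref{pwm2} applied to $y=c_{n}$, $x$, $z$ would give $h\le c\abs{z-x}$, whose left-hand side outgrows the right as $k\to\infty$; so~\eqref{pwm2} fails for every $c$, i.e.\ $c_{n}\notin\MM(f)$. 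Hence $\inter\MM(f)=\emptyset$, so by Proposition~\ref{meager3} every monotone subset of $\gr$ is nowhere dense, and by Corollary~\ref{intM}(i) $\gr$ is not \si monotone.

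\emph{Strong porosity.} This is the heart of the matter. It suffices to find $p>0$ so that every $\psi(t)\in\gr$ and every $r\in(0,\diam\gr)$ admit a ball $B\subs B(\psi(t),r)\setminus\gr$ of radius $pr$: balls centred off $\gr$ reduce to this (if $\dist(x,\gr)>r/2$ the hole is at $x$; otherwise replace $x$ by a nearby point of $\gr$ and halve $r$), and $r$ near $\diam\gr$ is trivial since $\gr$ is bounded. The holes come in two shapes. The geometric layout of the tents guarantees that near \emph{every} point and at \emph{every} scale there is a subinterval of $[t-r,t+r]$ of length proportional to $r$ meeting no tent --- this is what forces the rates $\sigma<\rho<\tau$ and uses the separation of the $K_{n}$. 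If $f(t)$ is not large compared with $r$, take such a gap; over it $\gr$ lies on the line $y=0$, and a disc of radius proportional to $r$ sitting just above the gap (hence above the baseline, and inside $B(\psi(t),r)$ after adjusting constants) misses $\gr$. If $f(t)$ is large compared with $r$ --- which forces $t$ to lie on a steep tent --- then the part of $\gr$ inside $B(\psi(t),2r)$ is a bounded number of nearly vertical arcs (moving away from $t$ on the side opposite the tower centre the distance to that centre changes by a bounded factor, so only boundedly many tents are crossed, each so steep that $\abs{f(s)-f(t)}>r/4$ once $\abs{s-t}\ge r/4$ along it) together with baseline pieces sitting at height $\approx0$, far below $\psi(t)$; a disc of radius proportional to $r$ centred at height $f(t)$ and placed in a gap between consecutive arcs then lies in $B(\psi(t),r)\setminus\gr$. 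The main obstacle is to make these choices uniform, to handle the interface $f(t)\asymp r$ and the possible interference of distinct towers near $\psi(t)$, and to verify throughout that the hole produced has radius a fixed multiple of $r$; the rest is routine bookkeeping.
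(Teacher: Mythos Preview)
Your construction differs from the paper's: you use disjoint ``spike towers'' converging to dense centres $c_n$, whereas the paper takes $f(x)=\sum_n a_n\|(x-q_n)/b_n\|$, a sum of single tents centred at an enumeration of the rationals with overlapping supports. Your arguments for absolute continuity (via $\sum_n\operatorname{TV}(\phi_n)<\infty$) and for $\inter\MM(f)=\emptyset$ (each $c_n$ fails~\eqref{pwm2}, then invoke Proposition~\ref{meager3}) are correct and in fact a bit simpler than the paper's corresponding Lemmas~\ref{HrA} and~\ref{HrB}, which must balance tail sums of heights against partial sums of slopes.

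The porosity argument, however, is where your proposal stops being a proof. You assert that ``near every point and at every scale there is a subinterval of $[t-r,t+r]$ of length proportional to $r$ meeting no tent'', appealing to the placement of the $K_n$ inside a porous Cantor set. But this is precisely the hard part: you must arrange the $K_n$ so that a definite gap appears at \emph{every} scale, simultaneously near Cantor points, near centres $c_n$, and near boundaries of the $K_n$ where the scale may be incomparable both to the ambient Cantor gap and to the internal tower geometry; porosity of the Cantor set alone does not deliver this. Your case ``$f(t)$ large compared with $r$'' is even sketchier: the claim that $\gr\cap B(\psi(t),2r)$ is a bounded number of nearly vertical arcs with usable horizontal gaps requires an actual count of which tents (possibly from several towers) reach height $f(t)-r$ within horizontal distance $r$ of $t$, and you have not controlled this. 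You yourself flag the interface $f(t)\asymp r$ and the interference of distinct towers as ``the main obstacle'' and then declare the remainder routine; it is not --- it is the substance of the theorem.

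The paper sidesteps all of this with a different mechanism. It fixes the family $\mathcal R$ of axis-parallel $5{:}3$ rectangles, tiles each $R\in\mathcal R$ by fifteen congruent squares $\mathcal S(R)$, and chooses $a_n,b_n$ \emph{recursively}: at stage $n$ a compactness argument shows that the partial-sum graph $\ggr_{n-1}$ stays a fixed positive distance from some square in $\mathcal S(R)$ whenever $R$ is not too small, and one then takes $a_n$ smaller than that distance (and $b_n$ so small that on small rectangles only one peak can appear). This preserves the invariant ``for every $R\in\mathcal R$ some square of $\mathcal S(R)$ is missed'' through the limit, and strong porosity with constant $1/\sqrt{34}$ follows immediately. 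That inductive ``preserve-a-free-square'' device is what your argument is missing.
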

The function is built of single peak functions. Let
$\norm x=\dist(x,\Rset\setminus[-1,1])$.
Fix two sequences of positive reals $\seq{a_n}$ and $\seq{b_n}$.
Suppose that $\sum_na_n<\infty$ and let the sequence $\seq{q_n}$ enumerate
all rationals within $[0,1]$. The following formula defines
a real-valued function $f:[0,1]\to\Rset$.
$$
  f(x)=\sum_{n\in\Nset}a_n \Bigl\|\frac{x-q_n}{b_n}\Bigr\|
$$
We will show that with a proper choice of the two sequences the function $f$
possesses the required properties.

For simplicity stake write $f_n(x)=a_n \bigl\|\frac{x-q_n}{b_n}\bigr\|$
and $s_n=\frac{a_n}{b_n}$.
\begin{lem}
If $\sum_na_n<\infty$, then $f$ is absolutely continuous.   \label{HrA}
\end{lem}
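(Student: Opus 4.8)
The plan is to show that $f$ is the uniform limit of a series of absolutely continuous functions whose derivatives converge in $L^1$, and then invoke the standard fact that such a limit is absolutely continuous. First I would observe that each summand $f_n(x)=a_n\|(x-q_n)/b_n\|$ is Lipschitz with constant $s_n=a_n/b_n$: indeed $\|\cdot\|$ is $1$-Lipschitz, and the affine rescaling $x\mapsto(x-q_n)/b_n$ multiplies the Lipschitz constant by $1/b_n$, so $f_n$ is $s_n$-Lipschitz, hence absolutely continuous on $[0,1]$, with $|f_n'|\leq s_n$ almost everywhere. Moreover each $f_n$ is piecewise linear, so $f_n'$ is a step function taking only the values $\pm s_n$ and $0$; in particular $f_n'$ is supported on the set where $|x-q_n|<b_n$, which has measure at most $2b_n$.

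The key point is to control $\sum_n\|f_n'\|_{L^1[0,1]}$. Since $f_n'$ is bounded by $s_n=a_n/b_n$ and supported on a set of measure at most $2b_n$, we get $\|f_n'\|_{L^1}\leq s_n\cdot 2b_n=2a_n$. (If the chosen $b_n$ are not all small, one still has the crude bound $\|f_n'\|_{L^1}\leq s_n=a_n/b_n$, but in the intended construction the $b_n$ will be $\leq 1$, so $2a_n$ is the bound to use.) Hence $\sum_n\|f_n'\|_{L^1}\leq 2\sum_n a_n<\infty$ by hypothesis. Consequently the series $\sum_n f_n'$ converges absolutely in $L^1[0,1]$ to some $g\in L^1[0,1]$, and the series $\sum_n f_n$ converges uniformly on $[0,1]$ (its terms are bounded in sup norm by $a_n$ since $\|\cdot\|\leq 1$), so the partial sums $F_N=\sum_{n\leq N}f_n$ converge uniformly to $f$.

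Finally I would assemble these pieces: each $F_N$ is absolutely continuous with $F_N'=\sum_{n\leq N}f_n'$, so for $0\leq x\leq 1$ we have $F_N(x)=F_N(0)+\int_0^x F_N'(t)\,dt$. Letting $N\to\infty$, the left side converges to $f(x)$ uniformly, $F_N(0)\to f(0)$, and $\int_0^x F_N'(t)\,dt\to\int_0^x g(t)\,dt$ uniformly in $x$ because $\|F_N'-g\|_{L^1}\to 0$. Therefore $f(x)=f(0)+\int_0^x g(t)\,dt$ for all $x\in[0,1]$, i.e.\ $f$ is an indefinite integral of an $L^1$ function, which is precisely absolute continuity. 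There is no real obstacle here; the only thing to be careful about is the $L^1$ bound on $f_n'$, which is why the hypothesis is phrased in terms of $\sum_n a_n$ rather than $\sum_n s_n$, and one should note explicitly that the measure of the support of $f_n$ is at most $2b_n$ so that the $b_n$ cancel.
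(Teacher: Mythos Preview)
Your argument is correct and takes a genuinely different route from the paper. The paper verifies absolute continuity directly from the $\varepsilon$--$\delta$ definition: given $\varepsilon>0$ it splits the series into a head $\sum_{n\leq m}$ and a tail $\sum_{n>m}$ with $\sum_{n>m}a_n<\varepsilon$, controls the head by the Lipschitz bound $|f_n(x)-f_n(y)|\leq s_n|x-y|$, and controls the tail by the total-variation bound $\sum_i|f_n(x_i)-f_n(y_i)|\leq 2a_n$ (unimodality). You instead observe once and for all that $\|f_n'\|_{L^1}\leq 2a_n$ (equivalently, the total variation of $f_n$ is $2a_n$), so the derivatives sum absolutely in $L^1$, and then invoke the characterization of absolute continuity as being an indefinite integral of an $L^1$ function.

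Your route is arguably cleaner: it avoids the head/tail split entirely and makes transparent why the hypothesis is $\sum_n a_n<\infty$ rather than $\sum_n s_n<\infty$. The paper's direct argument has the virtue of being self-contained and not invoking the Lebesgue characterization of absolute continuity. One small remark: your parenthetical worry about large $b_n$ is unnecessary, since $\|f_n'\|_{L^1[0,1]}\leq\|f_n'\|_{L^1(\Rset)}=2a_n$ holds regardless of the size of $b_n$ (the total variation of the tent $f_n$ on $\Rset$ is exactly $2a_n$).
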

\begin{proof}
Fix $\eps>0$. Choose $m\in\Nset$ such that
$\sum_{n>m}a_n\leq\eps$ and let
$\delta= \frac{\eps}{\sum\limits_{n\leq m}s_n}$.
Suppose $x_0<y_0<x_1<y_1<\dots<x_k<y_k$ satisfy
$\sum\limits_{i=0}^k y_i-x_i<\delta$.
Since $\abs{f_n(x_i)-f_n(y_i)}\leq s_n(y_i-x_i)$ for all $i$ and $n$,
we have, for all $n$,
\begin{align}
  \sum_{i=0}^k\abs{f_n(x_i)-f_n(y_i)}&\leq\sum_{i=0}^k s_n(y_i-x_i)
  <\delta s_n \label{Hr4}
\intertext{%
and since the function $f_n$ is unimodal and ranges between $0$ and $a_n$, also}
  \sum_{i=0}^k\abs{f_n(x_i)-f_n(y_i)}&\leq 2a_n. \label{Hr5}
\end{align}
Use~\eqref{Hr4} for $n\leq m$ and~\eqref{Hr5} for $n>m$ to get
\begin{align*}
\sum_{i=0}^k\abs{f(x_i)-f(y_i)}
  &\leq\sum_{n\leq m}\sum_{i=0}^k\abs{f_n(x_i)-f_n(y_i)}+
  \sum_{n>m}\sum_{i=0}^k\abs{f_n(x_i)-f_n(y_i)}\\
  &\overset{(\ref{Hr4},\ref{Hr5})}{<}
  \sum_{n\leq m}\delta s_n+\sum_{n>m} 2a_n
    \leq
  \eps+2\eps=3\eps,
\end{align*}
the last inequality by the choice of $\eps$ and $\delta$.
\end{proof}
\begin{lem}\label{HrB}
If $\lim\limits_{m\to\infty}\frac{\sum_{n>m}a_n}{a_m}=0$ and
$\lim\limits_{m\to\infty}\frac{\sum_{n<m} s_n}{s_m}=0$, then $\MM(f)$ is meager.
\end{lem}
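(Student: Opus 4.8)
The plan is to argue by contradiction, combining a Baire category reduction with an explicit estimate at the peaks $q_m$, in the spirit of Proposition~\ref{function}.

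\emph{Step 1 (reduction to a uniform witness).} For $c,k\in\Nset$ let $F_{c,k}\subs I$ be the set of $y$ at which condition~\eqref{pwm} holds with this $c$ and with $\eps=1/k$. Since $\psi_f$ is continuous and the constraints $x\in(y-1/k,y)$, $z\in(y,y+1/k)$ are open in $y$, each $F_{c,k}$ is closed, and clearly $\MM(f)=\bigcup_{c,k\in\Nset}F_{c,k}$. If $\MM(f)$ were non-meager, some $F_{c,k}$ would have non-empty interior, so it suffices to derive a contradiction from the existence of an interval $J\subs I$, a constant $c$, and $\eps_0>0$ such that \eqref{pwm} holds at every $y\in J$ with these $c$ and $\eps_0$.

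\emph{Step 2 (estimates and the contradiction).} Fix $m$ and write $f=f_m+E_m$ with $E_m=\sum_{n\ne m}f_n$. Every $f_n$ satisfies both $\abs{f_n(u)-f_n(v)}\leq a_n$ and $\abs{f_n(u)-f_n(v)}\leq s_n\abs{u-v}$; using the second for $n<m$ and the first for $n>m$ gives
$$
  \abs{E_m(u)-E_m(v)}\leq\abs{u-v}\sum_{n<m}s_n+\sum_{n>m}a_n .
$$
Put $\delta_m=s_m^{-1}\sum_{n<m}s_n$ and $\eps_m=a_m^{-1}\sum_{n>m}a_n$; the hypotheses say $\delta_m\to0$ and $\eps_m\to0$, and since $\sum_{n<m}s_n\geq s_1>0$ also $s_m\to\infty$, hence $b_m=a_m/s_m\to0$. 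Choose $m$ so large that $q_m\in J$ (there are infinitely many such $m$), $b_m<\eps_0$, and $s_m^{-1}+\delta_m+\eps_m<\tfrac1{2(1+2c)}$, and set $y=q_m$, $x=q_m-b_m$, $z=q_m+b_m$, which lie in the $\eps_0$-neighbourhood of $y$. Since $f_m(y)=a_m$, $f_m(x)=f_m(z)=0$, and $b_m\sum_{n<m}s_n=a_m\delta_m$, the displayed bound yields $\abs{f(x)-f(y)}\geq a_m(1-\delta_m-\eps_m)$ and $\abs{f(x)-f(z)}\leq 2a_m(\delta_m+\eps_m)$, while $\abs{z-x}=2a_m s_m^{-1}$. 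Hence $\abs{\psi(x)-\psi(y)}\geq\abs{f(x)-f(y)}\geq a_m(1-\delta_m-\eps_m)$ and $\abs{\psi(x)-\psi(z)}\leq\abs{z-x}+\abs{f(x)-f(z)}\leq 2a_m(s_m^{-1}+\delta_m+\eps_m)$, so \eqref{pwm} at $y$ would force $1-\delta_m-\eps_m\leq 2c(s_m^{-1}+\delta_m+\eps_m)$, contradicting the choice of $m$. Thus $\MM(f)$ is meager.

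\emph{Main obstacle.} The delicate point is the bookkeeping in Step~2: one must split $E_m$ exactly at the index $m$ and use the Lipschitz bound only for the small-slope block $n<m$ (evaluated at the tiny scale $\abs{u-v}=b_m$) and the crude sup bound only for the summably small tail $n>m$, so that at scale $b_m$ both error terms are $a_m$ times a null sequence while the genuine oscillation $\abs{f_m(x)-f_m(y)}=a_m$ dominates; verifying $s_m\to\infty$ (hence $b_m\to0$, so the $\eps_0$-constraint is eventually satisfied) also belongs here. The Baire reduction in Step~1 and the elementary inequalities between $\psi_f$-distances and coordinate differences are routine.
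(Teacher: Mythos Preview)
Your proof is correct and follows essentially the same approach as the paper: both choose the triple $x=q_m-b_m$, $y=q_m$, $z=q_m+b_m$, split the remainder $E_m=\sum_{n\neq m}f_n$ into a low-index Lipschitz part and a high-index small-amplitude tail, and obtain the same estimates showing these three points violate the monotonicity inequality. The only cosmetic difference is the Baire reduction---you decompose $\MM(f)=\bigcup_{c,k}F_{c,k}$ directly, whereas the paper invokes Propositions~\ref{meager2}, \ref{meager3} and Lemma~\ref{baire} to reduce to showing $\gr|I$ is monotone for no interval~$I$.
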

\begin{proof}
It is clear that if $s_m>2c$, then the points $q_m-b_m<q_m<q_m+b_m$ witness that
the graph $f_m$ is not $c$-monotone.
We want to show that the same argument works for the entire sum $f=\sum_nf_n$.
The former condition ensures that the terms $f_n$, $n>m$, contribute to the sum
negligible quantities because of their small magnitudes.
The latter condition ensures that also the terms $f_n$, $n<m$, are negligible
because of their small slopes.

Write
$$
  \eps_m=\frac{\sum_{n>m}a_n}{a_m}+\frac{\sum_{n<m}s_n}{s_m}
  =\frac{\sum_{n>m}^\infty a_n+b_m\sum_{n<m} s_n}{a_m}.
$$
According to Propositions~\ref{intM},~\ref{meager2} and Lemma~\ref{baire}
it is enough to show that $\gr|I$ is monotone for no interval $I$.

Fix $c>0$ and an interval $I$.  The hypotheses ensure that $\eps_m\to 0$ and
$s_m\to\infty$. Therefore if $m$ is large enough $m$, then
\begin{equation}\label{Hr7}
  \frac{1-\eps_m}{2(\frac1{s_m}+\eps_m)}>c.
\end{equation}
Choose such na $m$ subject to $[q_m{-}b_m,q_m{+}b_m]\subs I$.
Write $x=q_m{-}b_m$, $z=q_m{+}b_m$.
If we succeed to prove that
\begin{equation}\label{Hr8}
  \abs{\psi(z)-\psi(q_m)}>c\abs{\psi(z)-\psi(x)},
\end{equation}
we will be done, because the points $x<q_m<z$ will witness that
$\gr|I$ is not $c$-mono\-tone.
Estimate the term on the left
\begin{align*}
  \abs{\psi(z)-\psi(q_m)}
  &\geq\abs{f(z)-f(q_m)}
  \geq\abs{f_m(z)-f_m(q_m)}-
  \sum_{n\neq m}\abs{f_n(z)-f_n(q_m)}\\
  &\geq a_m-
  \Bigl(\sum_{n<m}\abs{f_n(z){-}f_n(q_m)}
  +\sum_{n>m}\abs{f_n(z){-}f_n(q_m)}\Bigr)\\
  &\geq a_m-\Bigl(\sum_{n<m}s_nb_m+\sum_{n>m}a_n\Bigr)
  =a_m-\eps_ma_m=a_m(1-\eps_m),\\
\intertext{and the term on the right}
  \abs{\psi(z)-\psi(x)}
  &\leq
  2b_m+\abs{f(z)-f(y)}\\
  &\leq 2b_m+\sum_{n<m}\abs{f_n(z){-}f_n(x)}
  +\sum_{n>m}\abs{f_n(z){-}f_n(x)}\\
  &\leq 2b_m+2b_m\sum_{n<m}s_n+\sum_{n>m}a_n
  \leq 2b_m+2\Bigl(b_m\sum_{n<m}s_n+\sum_{n>m}a_n\Bigr)\\
  &\leq 2(b_m+\eps_ma_m)=2a_m\Bigl(\frac{1}{s_m}+\eps_m\Bigr).
\end{align*}
Thus~\eqref{Hr7} yields
$$
  \frac{\abs{\psi(z)-\psi(q_m)}}{\abs{\psi(z)-\psi(x)}}
  \geq\frac{a_m(1-\eps_m)}{2a_m\bigl(\frac{1}{s_m}+\eps_m\bigr)}=
  \frac{1-\eps_m}{2\bigl(\frac{1}{s_m}+\eps_m\bigr)}>c
$$
and~\eqref{Hr8} follows.
\end{proof}
The next goal is to show that with a proper choice of $\seq{a_n}$ and $\seq{b_n}$
the graph of $f$ is porous. To that end
we introduce the following system of rectangles.
Let $\mc R$ denote the family of all planar rectangles $I\times J$,
where $I,J$ are compact intervals,
with aspect ratio $5:3$, i.e.~$\frac{\leb(I)}{\leb(J)}=\frac53$.
Each $R\in\mc R$ is covered in a natural way by $15$ non-overlapping
closed squares with side one fifth of the length of the base of $R$.
The family of these squares will be denoted $\mc S(R)$.
These squares determine in a natural way five closed columns and three
closed rows.

Given $R\in\mc R$, the length of the base of $R$ is denoted $\ell(R)$.
\begin{lem}
There are sequences $\seq{a_n}$ and $\seq{b_n}$ satisfying hypotheses of
Lemma~\ref{HrB} such that for each $R\in\mc R$ there is a square $S\in\mc S(R)$
such that $\inter S\cap\gr=\emptyset$.
\end{lem}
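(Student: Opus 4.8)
The plan is to build $\seq{a_n}$ and $\seq{b_n}$ by a careful bookkeeping argument that simultaneously (i) keeps the graph locally ``flat enough'' to have a gap inside every sufficiently small rectangle of $\mc R$, and (ii) respects the two limit conditions of Lemma~\ref{HrB}. The point is that each peak function $f_n$ affects the plane only within the vertical strip $[q_n-b_n,q_n+b_n]\times\Rset$, and its contribution to the total oscillation of $f$ over an interval of length $h$ is at most $\min(2a_n,s_n h)$. So if we make the $b_n$ shrink very fast and the $a_n$ shrink fast as well, then on any small enough scale only finitely many peaks are ``large'' and the rest contribute a negligible amount; the graph then looks, at that scale, like a piecewise-linear curve with few pieces, inside which one of the $15$ squares of any $R\in\mc R$ must miss the graph.

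\textbf{Step 1: a flatness estimate.} First I would fix a rectangle $R = I\times J\in\mc R$ with $\ell(R)=5\lambda$ (so each $S\in\mc S(R)$ has side $\lambda$), and quantify when $\gr$ is forced to miss some square. The oscillation of $f$ over any subinterval of $I$ of length $\lambda$ is bounded by $\sum_n\min(2a_n,s_n\lambda)$. If this sum is, say, $<\lambda$, then over the five columns of $R$ the graph cannot rise through all three rows, and a counting argument (the graph enters each column, its total vertical oscillation across the column is $<\lambda$, so in that column it stays within a horizontal band of height $<\lambda$, hence misses at least one of the three squares in that column). This reduces the lemma to the numerical requirement: for every $\lambda>0$ that actually occurs as a one-fifth base length of some rectangle in $\mc R$ (i.e.\ every $\lambda>0$), the tail-and-slope sum $\sum_n\min(2a_n,s_n\lambda)$ is controlled. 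That sum need not be $<\lambda$ for all $\lambda$ — it won't be, since $s_n\to\infty$ — so Step 1 must be refined: we only need the gap property for rectangles whose base length is \emph{small relative to the current stage of the construction}, and we interleave the choice of $a_n,b_n$ with an exhaustion of scales.

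\textbf{Step 2: the recursive construction.} I would choose $a_n$ and $b_n$ recursively so that three things hold: $a_n \le \tfrac12 a_{n-1}$ and $\sum_{n>m}a_n = o(a_m)$ (geometric decay suffices, giving the first hypothesis of Lemma~\ref{HrB}); $\sum_{n<m}s_n = o(s_m)$, which we arrange by making $s_m = a_m/b_m$ grow super-fast, i.e.\ choosing $b_m$ extremely small compared to $a_m$ and to all earlier $b$'s and $a$'s (this gives the second hypothesis); and, crucially, $b_m$ so small that for every rectangle $R\in\mc R$ with $\ell(R)\ge 5 b_{m-1}$ (the ``scales already handled''), the tail contribution $\sum_{n\ge m}\min(2a_n, s_n \ell(R)/5)\le$ (a small fraction of $\ell(R)/5$, using $a_n$ already decaying geometrically and the fact that the only genuinely dangerous term is $n=m$ itself, which touches a strip of width $2b_m\ll\ell(R)$). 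Meanwhile the finitely many peaks $f_1,\dots,f_{m-1}$, being piecewise linear with at most $m-1$ peaks total, carve $I$ into boundedly many monotone pieces, so at fine enough scale (which we can demand by taking $b_m$ small, hence future rectangles small) every $R\in\mc R$ sits inside one linear piece of $f_1+\dots+f_{m-1}$, where the graph of the partial sum is a single segment and an easy geometric lemma produces an empty square in $\mc S(R)$ with room to spare; the perturbation by $\sum_{n\ge m}f_n$ is too small to fill that room.

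\textbf{Main obstacle.} The delicate point is reconciling the two ``speeds'': the second hypothesis of Lemma~\ref{HrB} wants $s_m = a_m/b_m$ to blow up, hence $b_m$ must be tiny, yet a tiny $b_m$ means the peak $f_m$ is a very tall-and-thin spike of height $a_m$ on a base of width $2b_m$, which locally is the \emph{opposite} of flat. The resolution — and the part that needs the most care in the writing — is that this spike is confined to a strip of width $2b_m$, which is negligible compared to the base length of any rectangle we still have to worry about; we only ever need the ``flat'' picture at scales $\gg b_m$ but $\ll b_{m-1}$, a window we control by ordering the quantifiers correctly (choose $R$'s scale after $b_m$). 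Getting this interleaving of quantifiers right, and making the ``at most $m-1$ linear pieces $\Rightarrow$ empty square'' geometric step precise (including the aspect ratio $5:3$ being exactly what lets a segment of any slope avoid one of the fifteen squares), is the crux; the rest is the routine tail estimates already packaged in Lemmas~\ref{HrA} and~\ref{HrB}.
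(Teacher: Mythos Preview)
Your strategy is on the right track and you correctly identify the main obstacle, but the resolution you propose for small rectangles has a genuine gap. The claim that a small rectangle ``sits inside one linear piece of $f_1+\dots+f_{m-1}$'' fails: the partial sum $g_{m-1}$ has breakpoints at $q_j$ and $q_j\pm b_j$ for each $j<m$, so its shortest linear piece has length at most $b_{m-1}$. Since you want $b_{m-1}$ tiny (to make $s_{m-1}$ huge), rectangles at your intended scale will routinely straddle these breakpoints --- in particular the peak at $q_{m-1}$. Your observation that the spike is confined to a narrow strip helps when the rectangle is much wider than the spike, but when $\ell(R)$ is comparable to or smaller than $b_{m-1}$ the spike may dominate $R$ entirely, and you never say what happens there. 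Tying the scale thresholds to $5b_{m-1}$ gives no separation between ``rectangle scale'' and ``spike width'', so the window ``$\gg b_m$ but $\ll b_{m-1}$'' you describe in the obstacle paragraph does not actually cover all rectangles.

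The paper's argument sidesteps this by organizing scales around the fixed rationals rather than the shrinking widths $b_j$. For rectangles with $\ell(R)\ge\eps_n:=\min_{i<j\le n}|q_i-q_j|$ it uses a soft compactness step instead of explicit estimates: the induction hypothesis gives each such $R$ a square disjoint from $\ggr_{n-1}$, and compactness of the relevant family of rectangles produces a uniform lower bound $\delta_n>0$ on that distance; one then simply takes $a_n<\delta_n$. For $\ell(R)<\eps_n$ it exploits a structural consequence of the slope condition $s_n>2^n\sum_{j<n}s_j$: every positive local maximum of $g_n$ occurs at one of the points $q_j$, so a rectangle with base shorter than $\eps_n$ contains at most one such maximum, whereas a continuous graph meeting all fifteen squares of $R$ would need at least two. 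This local-maxima count replaces your linear-piece picture and is insensitive to how narrow the individual spikes are --- which is exactly where your argument breaks down.
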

\begin{proof}
We build the sequences recursively. Let $g_n=\sum_{i\leq n}f_i$, $n\in\Nset$, be the
partial sums of $f$; graphs of $g_n$ are denoted $\ggr_n$.
Our goal is to find $a_n$'s and $b_n$'s so that for each $n$ the following holds:
\begin{equation}\label{HrSq}
  \text{For each $R\in\mc R$ there is a square
  $S\in\mc S(R)$ disjoint with $\ggr_n$.} \tag{$\mathsf{C}_n$}
\end{equation}

Choose $a_0$ and $b_0$ so that $s_0>3$. The graph of $f_0$
is obviously covered by three lines: two skewed and one horizontal.
Let $R\in\mc R$.
Each of the two skewed lines, because of their big slopes,
can meet at most two out of the five columns. Therefore one column remains left.
The horizontal line meets at worst two of the three squares forming this column.
Thus one square remains disjoint with each of the three lines and thus
with the graph $\ggr_0$ of $g_0=f_0$. Thus condition $\mathsf C_0$ is met.

Now suppose that $a_i$ and $b_i$ are set up for all $i<n$ so that
condition $\mathsf C_{n-1}$ is met. Let
$\eps_n=\min\{\abs{q_i-q_j}:0\leq i<j\leq n\}$.
\begin{claim}
There is $\delta_n>0$ such that if $\ell(R)\geq\eps_n$,
then there is $S\in\mc S(R)$ that is at least $\delta_n$ far apart from
$\ggr_{n-1}$.
\end{claim}
\begin{proof}
Suppose the contrary: For each $m$ there is $R_m\in\mc R$ such that
$\ell(R_m)\geq\eps_n$ and the distance of $S$ from $\ggr_{n-1}$ is less than
$\frac1m$ for each square $S\in\mc S(R_m)$. In particular, $\ell(R_m)\leq5$ for all
$m\geq1$ and there is a bounded set that
contains all rectangles $R_m$. Thus passing to a subsequence
we may suppose that $\seq{R_m}$ is convergent in the Hausdorff metric.
The limit $R$ of this sequence is clearly a rectangle with aspect ratio $5:3$
or a point.
But the latter cannot happen, because $\ell(R_m)\geq\eps_n$ for each $m$.
Thus $R\in\mc R$. The distance of $\ggr_{n-1}$ from
each of the squares $S\in\mc S(R)$ is obviously zero. Since the squares are
compact,
$\ggr_{n-1}$ meets all of them: the desired contradiction.
\end{proof}
Choose $a_n<\delta_n$ and $b_n$ subject to
\begin{equation}\label{Hr11}
  a_n\leq\frac{2^{-n}}{n}, \qquad s_n>2^n\sum_{i<n}s_i.
\end{equation}
We need to show that thus chosen values ensure condition $\mathsf C_n$.

Suppose first that $\ell(R)\geq\eps_n$. There is $S\in\mc S(R)$ such that
$\dist(S,\ggr_{n-1})\geq\delta_n$. Consequently
\begin{align*}
  \dist(S,\ggr_n)&\geq\dist(S,\ggr_{n-1})-\dist(\ggr_{n-1},\ggr_n)\\
  &\geq
  \delta_n-\max\abs{g_{n-1}-g_n}=\delta_n-\max\abs{f_n}=\delta_n-a_n>0.
\end{align*}
Thus $S$ is disjoint with $\ggr_n$.

To treat the case $\ell(R)<\eps_n$ we first prove
\begin{claim}
If $g_n(x)>0$ and a local maximum of $g_n$ occurs at $x$, then
$x=q_j$ for some $j\leq n$.
\end{claim}
\begin{proof}
Suppose $g_n(x)>0$ and there is a local maximum of $g_n$ at $x$.
We examine the left-sided derivative $g_n^-(x)$. Clearly
$g_n^-(x)=\sum_{i\leq n}f_i^-(x)$ and each $f_i^-(x)$ is either $0$, or
$s_i$, or $-s_i$.
If all of them were $0$, the value $g_n(x)$ would be $0$, so there is
$i\leq n$ such that
$f_i^-(x)\neq0$. Let $j=\max\{i\leq n:f_i^-(x)\neq0\}$.
First of the conditions~\eqref{Hr11} yields $\abs{\sum_{i<j}f_i^-(x)}<s_j$.
Since $g_n^-(x)\geq0$, it follows that $f_j^-(x)=s_j$.

By the same analysis of the right-sided derivative, letting
$k=\max\{i\leq n:f_i^+(x)\neq0\}$ we have $f_k^+(x)=-s_k$.

Suppose that $j<k$. Then, by the definition of $j$, $f_k^-(x)=0$ and
$f_k^+(x)=-s_k$. But there is no such point. Thus $j<k$ fails.
The same argument proves that $j>k$ fails as well. Therefore $j=k$.
Overall, $f_j^-(x)=s_j$ and $f_j^+(x)=-s_j$. The only point with this property
is $q_j$.
\end{proof}
Now suppose $R=I\times J\in\mc R$ and that $\ell(R)<\eps_n$.
It is clear that if the graph $\ggr_n$ passes through all squares $S\in\mc S(R)$,
then $g_n$ has at least two positive local maxima in $I$. Therefore, by the above Claim,
there are $i<j\leq n$ such that both $q_i$ and $q_j$ belong to $I$.
Consequently $\abs{q_i-q_j}\leq\leb(I)=\ell(R)<\eps_n$,
which contradicts the definition of $\eps_n$. Thus
$\ggr_n$ misses at least one of the squares $S\in\mc S(R)$.
The proof of condition $\mathsf C_n$ is finished.

It remains to draw the statement of the lemma from conditions $\mathsf C_n$.
Fix $R\in\mc R$. Since there are only finitely many squares in $\mc S(R)$,
there is $S\in\mc S(R)$ such that the set $F=\{n:\ggr_n\cap S=\emptyset\}$
is infinite. Since $f=\lim_{n\in F}g_n$, we have
$\gr\subs\cl{\bigcup_{n\in F}\ggr_n}$.
Therefore $\gr$ does not meet $\inter S$.

Conditions~\eqref{Hr11} ensure that $f$ satisfies hypotheses
of Lemma~\ref{HrB}.
\end{proof}
\subsection*{Proof of Theorem~\ref{Hrrrr}}
The required function $f$ is of course the one constructed in the above lemma.
Let $B(x,r)$ be any closed ball in $\Rset^2$. Inscribe in $B(x,r)$ a rectangle
$R\in\mc R$, as big as possible. By the above lemma there is a square
$S\in\mc S(r)$ such that $\inter S$ misses $\gr$. Inscribe into $S$ an open ball $B$.
This ball is disjoint with $\gr$. The radius of this ball is by trivial calculation
$r/\sqrt{34}$. The closed ball concentric with $B$ and of radius $\frac r6$
is thus disjoint
with $\gr$. We proved that $\gr$ is strongly porous.

The function $f$ is absolutely
continuous by Lemma~\ref{HrA} and $\gr$ is not \si monotone by Lemma~\ref{HrB}.
\qed

\medskip
Since any monotone function has trivially a $1$-monotone graph, and since
every absolutely continuous function is a difference of two increasing
functions, we have
\begin{coro}
A sum of two functions with $1$-monotone graphs need not have a
\si monotone graph.
\end{coro}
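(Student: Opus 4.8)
The plan is to recycle the absolutely continuous function $f\colon[0,1]\to\Rset$ built for Theorem~\ref{Hrrrr}, whose graph $\gr$ is not \si monotone (indeed every monotone subset of $\gr$ is nowhere dense). The only thing left to do is to display $f$ as a sum of two continuous functions, each of which has a $1$-monotone graph.

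The shortest route I would take is to invoke Corollary~\ref{BVchar}: being absolutely continuous on a compact interval, $f$ is of bounded variation, and by that corollary it is already a sum of two continuous functions with $1$-monotone graphs. Together with the non-\si-monotonicity of $\gr$ supplied by Theorem~\ref{Hrrrr}, this is the entire argument.

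If one prefers not to quote Corollary~\ref{BVchar} and instead follow the hint preceding the statement, the plan is: first write the Jordan decomposition $f=g-h$, taking $g(x)=V_0^x(f)$ to be the variation function and $h=g-f$; both $g$ and $h$ are nondecreasing, and both are continuous because $f$ is. Second, observe that a continuous nondecreasing function has a $1$-monotone graph --- for $x<y<z$ the inequality \eqref{L1,1} with $c=1$ is immediate from $(y-x)^2\le(z-x)^2$ and $0\le g(y)-g(x)\le g(z)-g(x)$ --- and that the continuous nonincreasing function $-h$ likewise has a $1$-monotone graph, since $1$-monotonicity of a graph is invariant under reversing the $y$-axis. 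Hence $f=g+(-h)$ exhibits $f$ as a sum of two functions with $1$-monotone graphs, while $\gr$ is not \si monotone.

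I do not anticipate a genuine obstacle here; the content is already packaged in Theorem~\ref{Hrrrr} and Corollary~\ref{BVchar}. The one point I would be slightly careful about is the classical remark that the monotone summands in the Jordan decomposition of a \emph{continuous} bounded-variation function can be chosen continuous --- which follows from continuity of the variation function.
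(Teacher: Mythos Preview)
Your proposal is correct and matches the paper's own argument: the paper simply observes that the absolutely continuous $f$ of Theorem~\ref{Hrrrr} is a difference of two increasing functions, each of which (being monotone) has a $1$-monotone graph, so $f$ is the desired sum. Your first route via Corollary~\ref{BVchar} is a harmless shortcut to the same Jordan-decomposition content, and your second route is exactly the paper's reasoning.
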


%%%%%%%%%%%%%%%%%%%%%%%%%%%%%%%%%%%%%%%%%%%%%%%%%%%%%%%%%%%%%%%%%%%%%%%%%%%%%%%%
%%%%%%%%%%%%%%%%%%%%%%%%%%%%%%%%%%%%%%%%%%%%%%%%%%%%%%%%%%%%%%%%%%%%%%%%%%%%%%%%
\section{Remarks and questions}\label{sec:Q}
%%%%%%%%%%%%%%%%%%%%%%%%%%%%%%%%%%%%%%%%%%%%%%%%%%%%%%%%%%%%%%%%%%%%%%%%%%%%%%%%
%%%%%%%%%%%%%%%%%%%%%%%%%%%%%%%%%%%%%%%%%%%%%%%%%%%%%%%%%%%%%%%%%%%%%%%%%%%%%%%%

We conclude with several remarks and questions that we consider interesting.

\subsection*{Hausdorff dimension}
If a continuous function $f:I\to\Rset$ has a monotone graph, then
$\hdim\gr=1$  by Corollary~\ref{coroHaus}.
The analogy for \si monotone graph fails:
\begin{prop}\label{chdim}
There is a continuous function $f:[0,1]\to\Rset$ with a \si monotone graph
such that $\hdim\gr>1$. Any such function admits a perfect set
of non-$\MM$-points.
\end{prop}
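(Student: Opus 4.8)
The plan is to build $f$ as a sum $f=\sum_n f_n$ of ``bump-like'' contributions supported near a carefully chosen sequence of base points, in the spirit of the construction in Section~\ref{sec:MICHAL}, but now tuning the amplitudes so that $\gr$ has Hausdorff dimension strictly greater than $1$ while each partial graph remains $\sigma$-monotone off a small set. Concretely, I would fix a sequence of pairwise disjoint dyadic-type intervals and, on each one, replace a linear piece by a scaled copy of a fixed self-affine ``tent'' profile, choosing the vertical scaling so large relative to the horizontal scaling that the resulting limit graph contains (a bi-Lipschitz copy of) a self-affine set of dimension $s>1$ — e.g. via a standard mass-distribution / Frostman argument on the natural measure carried by the iterated construction. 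Because the pieces are arranged on disjoint intervals and the construction is done at geometrically separated scales, the graph decomposes as a countable union of graphs over the ``flat'' (already-differentiable, hence by Theorem~\ref{thmMM1} $\MM_1$, hence monotone) regions together with countably many rescaled copies of a single compact ``core'' piece; arranging that core piece to itself be monotone (which one can do by making each elementary bump satisfy condition $\mathsf P_c$ of Lemma~\ref{L4} individually, the way $f_0$ does in Lemma~\ref{HrB}) then yields $\sigma$-monotonicity of $\gr$.

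The key steps, in order, are: (1) specify the elementary profile and the two scaling sequences, ensuring each elementary piece satisfies $\mathsf P_c$ for a fixed $c$ so that, by Lemma~\ref{L4}, the corresponding graph fragment is monotone; (2) verify uniform convergence of the partial sums $g_n=\sum_{i\le n}f_i$ to a continuous $f$ (a telescoping estimate on amplitudes, exactly as in the excerpt's Cauchy lemma); (3) exhibit an explicit countable cover of $\gr$ by monotone sets — the bi-Lipschitz images of the finitely many elementary profiles at each scale, plus the graph over the complement of all bump supports, which is locally flat and hence monotone by Theorem~\ref{thmMM1} — giving $\sigma$-monotonicity; (4) produce a lower bound $\hdim\gr\ge s>1$ by placing a Frostman measure of exponent $s$ on the self-affine subset sitting inside one fixed scale's worth of the construction and invoking the mass distribution principle; (5) for the last sentence, observe that if $\gr$ had only countably many non-$\MM$-points, then removing them leaves an $\MM$-point set that is co-countable, hence (being $F_\sigma$ by Proposition~\ref{meager2} and of full Lebesgue measure, indeed all but countably many points) has nonempty interior, so by Corollary~\ref{coroHaus} applied on a subinterval — via Lemma~\ref{baire}, which gives a subinterval on which $\gr$ is genuinely monotone — that subgraph has $\sigma$-finite length and Hausdorff dimension $1$; since dimension of the whole is the sup over a countable cover, this forces $\hdim\gr=1$, a contradiction, so the non-$\MM$-points are uncountable, and being the complement of the $F_\sigma$ set $\MM(f)$ they form a $G_\delta$, hence by the Perfect Set Theorem contain a perfect set.

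The main obstacle I anticipate is step (3) combined with step (4): one must choose the horizontal-versus-vertical scaling aggressively enough that the limit graph really captures a set of dimension $>1$ (this wants tall, steep bumps), yet the \emph{same} choice must keep every elementary fragment monotone with a \emph{uniform} constant $c$ and keep the amplitudes summable for uniform convergence — these pull in opposite directions, and reconciling them is the technical heart of the argument. A clean way around the tension is to note that monotonicity of a graph fragment is a bi-Lipschitz invariant (used already in the proof of Proposition~\ref{approximate} via \cite[Proposition~2.2]{NZ2}), so it suffices to fix \emph{one} compact monotone ``seed'' curve of dimension $>1$ and show the construction glues countably many affine images of it onto flat background; then steps (3) and (4) both reduce to properties of that single seed, and the only delicate point left is checking that the gluing — done at separated scales on disjoint supports — does not destroy monotonicity of the affine copies or inflate the dimension beyond what the seed already forces, both of which follow from the geometric separation of the scales.
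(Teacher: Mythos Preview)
Your construction for the first part is far more elaborate than necessary, and its central difficulty --- producing the ``monotone seed of dimension $>1$'' --- is never discharged. You say you will ``fix one compact monotone `seed' curve of dimension $>1$'' and glue affine copies of it, but you never say where such a seed comes from; constructing one is essentially the whole problem. A self-affine bump construction on disjoint intervals, with each elementary piece satisfying $\mathsf P_c$, produces a countable union of rectifiable pieces and hence a graph of dimension $1$; to push the dimension above $1$ you must nest scales, and then the limit is no longer a finite union of monotone elementary pieces, so your cover in step~(3) breaks down.

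The paper sidesteps all of this. Take any continuous $g:[0,1]\to\Rset$ with $\hdim\ggr>1$ (e.g.\ a Weierstrass-type example), and invoke the result of \cite{KMZ} quoted in the introduction: every Borel set in $\Rset^n$ contains a monotone compact subset of the same Hausdorff dimension. Thus there is a monotone compact $K\subs\ggr$ with $\hdim K>1$. Let $C=\{x:\psi_g(x)\in K\}$, set $f=g$ on $C$, and interpolate $f$ linearly on each of the countably many components of $[0,1]\setminus C$. Then $\gr$ is the union of the single monotone set $K$ with countably many line segments, hence $\sigma$-monotone, and $\hdim\gr\geq\hdim K>1$. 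In short, the black box \cite{KMZ} hands you the seed and the $\sigma$-monotone decomposition in one stroke; there is no need to build anything.

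For the second statement your conclusion is right but the route is tangled and contains a gap. From Lemma~\ref{baire} you extract a single subinterval $J'$ with $\hdim\gr|J'=1$, and then assert ``dimension of the whole is the sup over a countable cover, this forces $\hdim\gr=1$''. But you have produced no countable cover of $[0,1]$ by such subintervals; iterating Lemma~\ref{baire} only gives a dense open family, not a cover. The direct argument is one line: Theorem~\ref{thmHaus} already says $\hdim\gr|\MM(f)=1$, so if $\hdim\gr>1$ then $\hdim\gr|(I\setminus\MM(f))>1$, whence $I\setminus\MM(f)$ is uncountable; being the complement of the $F_\sigma$ set $\MM(f)$ it is $G_\delta$, so by the Perfect Set Theorem it contains a perfect set. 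Your final sentence has exactly this; just delete the detour through Lemma~\ref{baire} and Corollary~\ref{coroHaus} and cite Theorem~\ref{thmHaus} instead.
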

\begin{proof}
There is a continuous function $g:[0,1]\to\Rset$ such that $\hdim\ggr>1$,
cf.~e.g.~\cite[Chapter 11]{MR1102677}. By~\cite{KMZ}, there is a monotone
compact set $K\subs\ggr$ such that $\hdim K>1$.
Let $C=\{x\in[0,1]:\psi(x)\in K\}$.
Define $f$ to coincide with $g$ on $C$ and on each component of
the complement of $C$ let $f$ be linear and so that it is continuous on $[0,1]$.
Since there are only countably many components, the resulting function
has a \si monotone graph.

To prove the second statement notice that Theorem~\ref{thmHaus} yields
$\hdim\gr|\MM(f)=1$ and thus if $\hdim\gr>1$, then the set of non-$\MM$-points
certainly contains a perfect set.
\end{proof}

\subsection*{Nowhere differentiable functions}
The nowhere differentiable function of Proposition~\ref{function} has no
$\MM$-points. Though there is a plethora of other nowhere differentiable functions
with the same property and the argument for nonexistence of $\MM$-points
seems similar to that for nonexistence of derivatives,
in general we know about nowhere differentiable functions only
Corollary~\ref{coroMM2}: the set of $\MM$-points is meager.
\begin{question}
Is there a continuous nowhere differentiable function with a dense or even
perfectly dense set of $\MM$-points? What about $\MM_1$-points?
\end{question}

The Baire category arguments used cannot be adapted to subsets of a graph that
are of positive measure, since such sets may be totally disconnected and thus
have way too many candidates for witnessing order to check.
\begin{question}
Let $f$ be the function of Proposition~\ref{function}.
Is there a set $A\subs[0,1]$ of positive measure such that $\gr|A$
is monotone?
\end{question}

\subsection*{Bounded variation}
By Theorem~\ref{1-mono}, a continuous function with a $1$-monotone graph
is of bounded variation.
It also follows from Proposition~\ref{approximate} that a continuous function
with a monotone, rectifiable graph is differentiable almost everywhere.
\begin{question}
Is there a continuous function on $[0,1]$ with a monotone, rectifiable graph
that is not of bounded variation?
\end{question}

\subsection*{Luzin property}
Recall that $f$ satisfies \emph{Luzin condition} if $\leb(f(A))=0$ whenever $\leb(A)=0$.
Note that if $f$ has a monotone graph, then it satisfies Luzin condition
``almost everywhere'':
Letting $\DD_\infty=\{x\in\DD(f):\abs{f'(x)}=\infty\}$,
we have $\leb(\DD_\infty)=0$ and if $A\cap\DD_\infty=\emptyset$, then
$\leb(A)=0$ implies $\leb(f(A))=0$. Hence $f$ satisfies Luzin condition if and only if
$\leb(f(\DD_\infty))=0$.

The following easily follows from Theorem~\ref{bigder}.
\begin{prop}
A continuous function satisfying Luzin condition with a \si mono\-tone graph
is differentiable
at a set that has positive measure within each interval.
\end{prop}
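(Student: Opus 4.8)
The plan is to combine the ``almost everywhere Luzin'' observation preceding the statement with the strong dichotomy supplied by Theorem~\ref{bigder}. Write $\DD_\infty=\{x\in\DD(f):\abs{f'(x)}=\infty\}$; as noted, $\leb(\DD_\infty)=0$, and since $f$ satisfies the Luzin condition we also have $\leb(f(\DD_\infty))=0$. The first step is to fix an arbitrary interval $J\subs I$ and split into two cases according to whether $f$ is constant on $J$.

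If $f$ is constant on a nonempty open subinterval $J'\subs J$, then every point of $J'$ is trivially a point of differentiability of $f$ (with $f'=0$), so $\DD(f)\cap J$ has positive measure and we are done on $J$. Otherwise $f$ is not constant on any subinterval of $J$, and in particular $f$ is not constant on $J$ itself, so Theorem~\ref{bigder} applies and gives $\leb(f[\DD(f)\cap J])>0$. The key step is now to pass from positivity of $\leb(f[\DD(f)\cap J])$ back to positivity of $\leb(\DD(f)\cap J)$. Decompose $\DD(f)\cap J=(\DD(f)\cap J\setminus\DD_\infty)\cup(\DD(f)\cap J\cap\DD_\infty)$. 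The image of the second piece is contained in $f(\DD_\infty)$, hence is Lebesgue-null; therefore $\leb\bigl(f[\DD(f)\cap J\setminus\DD_\infty]\bigr)>0$ as well. On the set $\DD(f)\setminus\DD_\infty$ the function $f$ has a finite derivative at every point, so by the standard fact that a function with everywhere-finite derivative on a set maps null sets to null sets (equivalently: a finite derivative forces the Luzin condition on that set, which also follows directly from the ``almost everywhere Luzin'' remark above, whose only obstruction was exactly $\DD_\infty$), a null subset of $\DD(f)\setminus\DD_\infty$ would have null image. Contrapositively, since $f[\DD(f)\cap J\setminus\DD_\infty]$ has positive measure, its preimage part $\DD(f)\cap J\setminus\DD_\infty$ must have positive measure, and hence so does $\DD(f)\cap J$.

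Since $J\subs I$ was an arbitrary interval, $\DD(f)$ meets every subinterval in a set of positive measure, which is precisely the assertion. The main obstacle is the measure-transfer step in the previous paragraph: one must be careful that the relevant ``finite derivative implies Luzin'' statement is applied only on $\DD(f)\setminus\DD_\infty$, where derivatives are finite, and that the exceptional infinite-derivative set $\DD_\infty$ has been disposed of first using the Luzin hypothesis on $f$; everything else is a routine case split together with a direct appeal to Theorem~\ref{bigder}.
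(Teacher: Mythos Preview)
Your proof is correct and follows the route the paper has in mind (the paper merely records that the proposition ``easily follows from Theorem~\ref{bigder}''). One simplification is worth noting: the detour through $\DD_\infty$ and the ``finite-derivative Luzin'' fact is unnecessary. Since $f$ satisfies the Luzin condition by hypothesis, you can argue directly that if $\leb(\DD(f)\cap J)=0$ then $\leb\bigl(f[\DD(f)\cap J]\bigr)=0$, which contradicts Lemma~\ref{bigder} whenever $f$ is not constant on $J$; and if $f$ is constant on $J$ then $J\subs\DD(f)$ trivially. Your decomposition $\DD(f)\cap J=(\DD(f)\cap J\setminus\DD_\infty)\cup(\DD(f)\cap J\cap\DD_\infty)$ is correct but superfluous once the full Luzin hypothesis is in hand.
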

\begin{question}
Is a continuous function satisfying Luzin condition with a monotone graph differentiable
almost everywhere?
\end{question}

\subsection*{Porosity constant}
We know from~\cite[Theorem 4.2]{HrZi} that any monotone set in $\Rset^2$ is
strongly porous, and from Theorem~\ref{Hrrrr} that the converse fails.
In our proof we showed that the porosity
constant of $\gr$ can be pushed to $1/\sqrt{34}$.
Perhaps a set must be \si monotone if it is strongly porous and its
porosity constant is large enough? For compact sets in the plane it is not so:
Let $C\subs\Rset$ be a strongly porous perfect set such that every
$p<\frac12$ is a porosity constant of $C$. The set $C\times[0,1]$ clearly
has the same property. On the other hand, by~\cite[Lemma 2.1]{HrZi}
it is not \si monotone.
Hence there is a strongly porous compact set $X\subs\Rset^2$  such that every
$p<\frac12$ is its porosity constant and yet $X$ is not \si monotone.
But what about curves and graphs?
\begin{question}
Is there $p<\frac12$ such that every strongly porous curve in $\Rset^2$ with porosity
constant $p$ is monotone or \si monotone?
What about graphs of continuous functions?
\end{question}

\subsection*{Monotone graph vs.~continuity}
Say that a function $f:I\to\Rset$ is \emph{\si continuous} if there is a partition
$\{D_n:n\in\Nset\}$ of $I$ such that $f\rest D_n$ is continuous for each $n$.
We claim that monotone graph does not imply \si continuity.
To see that, let $C\subs[0,1]$ be the usual Cantor ternary set and $g:C\to C$
a non-\si continuous function. By~\cite[Proposition 4.6]{HrZi} $C\times C$ is
monotone. Therefore so is the graph of $g$. Now extend $g$
to $f:[0,1]\to\Rset$ by $f(x)=-1$ for $x\notin C$. Easy to check that $\gr$
is monotone and yet not \si continuous.

How about $1$-monotone graphs? Consider the function $f:[0,1]\to\Rset$ defined
by $f(x)=-1$ if $x$ is rational and $f(x)=x$ otherwise. The graph of $f$ is
$1$-monotone, but $f$ is continuous at no point. However, $f$ is continuous on
both rationals and irrationals.
\begin{question}
Is a function with a $1$-monotone graph \si continuous?
\end{question}

%
%Lower porosity is usually defined as follows: A set $X\subs\Rset^2$ is
%\emph{lower porous} if
%there is $p>0$ (called \emph{lower porosity constant} such that for any $x\in X$
%there is $r_0>0$ such that for any $r\in(0,r_0)$
%there is $y\in\Rset^2$ such that $B(y,pr)\subs B(x,r)\setminus X$.
%Analysis of our proof of Theorem~\ref{Hrrrr} reveals that with this notion
%of porosity everything works for 4-by-3 rectangles and thus the lower porosity
%constant can be pushed to $1/5$.

% BIBLIOGRAPHY ----------------------------------------------------
\bibliographystyle{amsplain}
%\bibliography{nwdiff}
\providecommand{\bysame}{\leavevmode\hbox to3em{\hrulefill}\thinspace}
\providecommand{\MR}{\relax\ifhmode\unskip\space\fi MR }
% \MRhref is called by the amsart/book/proc definition of \MR.
\providecommand{\MRhref}[2]{%
  \href{http://www.ams.org/mathscinet-getitem?mr=#1}{#2}
}
\providecommand{\href}[2]{#2}

% -----------------------------------------------------------------
\end{document}